\title{Robust Parameter Identifiability Analysis  via Column Subset Selection\thanks{Submitted to the editors May 2022.
This research was supported in part by grants 
NSF DMS-1745654 and DOE DE-SC0022085 (ICFI); 
NSF DMS-1845406 (AKS), NSF-DMS-1638521 (KJP, MAH), NSF-DMS-2053812 (MAH, RCS), AFOSR FA9550-18-1-0457 (RCS).  
Address for all authors: Department of Mathematics, North Carolina State University, Raleigh, NC 27695-8205, USA}.}
\author{Katherine J. Pearce\thanks{\email{kjpearce@ncsu.edu}, \url{https://kjpearce.github.io/}.}
\and Ilse C.F. Ipsen\thanks{\email{ipsen@ncsu.edu},
\url{https://ipsen.math.ncsu.edu/}.}
\and Mansoor A. Haider\thanks{\email{mahaider@ncsu.edu},
\url{https://haider.wordpress.ncsu.edu}.}
\and Arvind K. Saibaba\thanks{\email{asaibab@ncsu.edu}, \url{https://asaibab.math.ncsu.edu/}.}
\and Ralph C. Smith\thanks{\email{rsmith@ncsu.edu}, \url{https://https://rsmith.math.ncsu.edu/}.}
}
\begin{document}
\maketitle

\begin{abstract}
We advocate a numerically reliable and accurate approach for practical
parameter identifiability analysis: Applying column subset selection (CSS) to the sensitivity matrix, instead of computing an eigenvalue decomposition of the Fischer information matrix. 
Identifiability analysis via CSS
has three advantages:
(i) It quantifies reliability of the subsets of parameters selected as identifiable and unidentifiable.
(ii) It establishes criteria for comparing the accuracy
of different algorithms.
(iii) The implementations are numerically more accurate and reliable than eigenvalue methods applied to the
Fischer matrix,
yet without an increase in computational cost. The effectiveness of the CSS methods is illustrated with extensive numerical experiments 
on sensitivity  matrices from six physical models, as well as on adversarial synthetic matrices.
Among the CSS methods,
we recommend an implementation based on the strong rank-revealing QR algorithm  because of its rigorous accuracy guarantees for both identifiable and 
non-identifiable parameters.
  \end{abstract}

\begin{keywords}
Sensitivity matrix, Fischer information matrix, systems of ordinary differential equations, dynamical systems, eigenvalue decomposition, singular value decomposition, pivoted 
QR decomposition, rank-revealing QR decomposition
\end{keywords}

\begin{AMS}
65F25, 65F35, 65Z05, 15A12, 15A18, 15A23, 15A42, 37N25, 92C42
\end{AMS}


\section{Introduction}
In data-driven mathematical modeling, the ability to reliably estimate model parameters depends on the set of available observations, the scope of system responses for which such observations are available, the inherent mathematical structure of the  model, and the parameter estimation method.
Identifiability analysis evaluates the ability to accurately estimate each parameter in a model and, in some cases, quantifies the extent to which this estimate is reliable. 
It has wide-ranging implications for a variety of applications, including analysis of disease and epidemiology models to guide treatment regimes, physiologically-based pharmacokinetic (PBPK) and quantitative system pharmacology (QSP) models for drug development, and coupled multi-physics models for next-generation nuclear power plant design. In particular, identifiability analysis can be more challenging, yet also have greater impact, in applications where the number of model variables and  parameters is significantly greater than  the number of responses with available data.

Practical identifiability analysis refers to the partitioning of parameters in a mathematical model into two groups:
{\em identifiable} parameters that can be reliably  estimated from data and those that cannot, termed  {\em unidentifiable}.
At the heart of many practical identifiability methods is the sensitivity matrix $\mchi$, whose columns 
represent  model parameters and whose rows
represent observations (data) for a quantity of interest.
A common approach 
extracts identifiable and unidentifiable 
parameters from eigenvalues and eigenvectors of the Fischer information matrix $\mchi^T \mchi$.
However, the sensitivity matrix $\mchi$ is often ill-conditioned,
that is, sensitive to small perturbations, so that the explicit
formation of the cross product  $\mchi^T \mchi$ can 
inflict a serious loss of accuracy.

We  apply instead column subset selection (CSS)
to the sensitivity matrix~$\mchi$, which has the same 
computational complexity as eigenvalue methods on  the Fischer
matrix $\mchi^T \mchi$.
We derive bounds that show the 
superior accuracy of CSS, and corroborate
this with extensive numerical experiments on a 
variety of model-based and adversarial synthetic matrices. 
The higher accuracy of the CSS methods produces a
more reliable distinction between identifiable and unidentifiable parameters, as illustrated by their highly consistent performance across across this suite of test matrices.
This is especially critical when the identifiable parameters 
inform subsequent investigations 
\cite{BVV99,Cintron2009, Pearce21}.

\subsection{Contributions}
We advocate a numerically reliable and accurate approach for practical
parameter identifiability analysis: Applying column subset selection to the sensitivity matrix, instead of computing an eigenvalue decomposition of the Fischer information matrix. 

\begin{enumerate}
\item We interpret  algorithms based on eigenvalue decompositions of the Fischer matrix \cite{Joll72} as known
column subset selection (CSS) methods
applied to the sensitivity  matrix (section~\ref{s_back}). This connection allows us to derive rigorous guarantees for the accuracy and reliability of the parameter identification that were previously lacking.
\item Identifiability analysis via CSS 
(section~\ref{sec:IdCSS})
has five advantages:
\begin{enumerate}
\item It broadens the applicability of parameter identifiability analysis by permitting the use of synthetic data generated from an additive observation model.  This is crucial when experimental data are not available or optimization for determining nominal parameter values is not feasible.
\item It incorporates parameter correlation.
\item  It quantifies reliability of the subsets of parameters selected as identifiable and unidentifiable.
\item It establishes criteria for comparing the accuracy
of different algorithms.
\item The implementations are numerically more accurate and reliable than eigenvalue methods applied to the
Fischer matrix, yet
without an increase in computational cost. 

\end{enumerate}

\item We perform extensive numerical experiments 
(section~\ref{sec:physmods})
on sensitivity  matrices from six physical models
(section \ref{sec:RealSensDesc}, Appendix~\ref{sec:supp})
to illustrate the accuracy and reliability of the
CSS methods.
\item Among the four CSS methods (Algorithms \ref{alg_PCAB1}--\ref{alg_srrqr}),
we recommend an implementation based on the strong rank-revealing QR algorithm 
(Algorithm~\ref{alg_srrqr}) because of its rigorous accuracy guarantees for both, identifiable and 
unidentifiable parameters, through bounds that 
have only a polynomial dependence on the number of relevant
parameters, rather than an exponential dependence as
in Algorithms \ref{alg_PCAB1}--\ref{alg_PCAB3}.
 \item We construct an adversarial matrix,
the SHIPS matrix (section~\ref{sec:SynSensDesc}) to amplify accuracy differences among the CSS methods.
Although synthetic, the adversarial matrices
(section~\ref{sec:SynSensDesc})
still admit an interpretation as sensitivity matrices
for certain dynamical systems 
(Appendix~\ref{sec:supp3}).
\end{enumerate}

The CSS algorithms (section~\ref{sec:IdCSS})
are based on existing work and presented with a view towards understanding rather than efficiency. 
In the same vein,
the correctness proofs (section~\ref{s_addr}) are
geared towards exposition: self-contained, as simple as possible, and more general with slightly fewer assumptions. 
With a view towards reproducibility, our implementations are available on \url{https://github.com/kjpearce/CSS-Algs-for-Sens-Identifiability}.

\section{Parameter sensitivity and identifiability}\label{s_paramid}
We define the notion of parameter identifiability
(section~\ref{s_pi}), and 
present real applications that require it (section~\ref{sec:RealSensDesc}).

\subsection{Parameter identifiability}\label{s_pi}
We assume that a model's quantity of interest $y$, such as a state variable in a system of differential equations, can be expressed as a scalar-valued function of system inputs and parameters,
$y = h(\mathbf{u};\mathbf{q})$.
Here the vector $\vu$ represents system inputs, such as time,  and 
the vector $\vq \in \real^p$ the model parameters. 

We denote the \textit{sensitivity} of $y$ with respect to the parameter $q_j$, evaluated at the $i$th observation and a specific point $\mathbf{q}^*$ in the admissible parameter space, by
\begin{align*}
    s_{ij} = \frac{\partial h_i(\mathbf{u};\mathbf{q})}{\partial q_j}\Bigr|_{\mathbf{q}=\mathbf{q}^*}, \qquad 1\leq i\leq n, \quad 
    1\leq j\leq p.
\end{align*}
The sensitivity matrix is 
$\mchi=(s_{ij})\in\real^{n\times p}$, 
and has more rows than columns, $n \geq p$. 
The parameters $\vq$ are \textit{sensitivity-identifiable} at $\vq^*$ if $\mchi^T\mchi$ is invertible \cite{Cintron2009,Luen1969,Reid1977}. 
Our goal is to determine those columns of $\mchi$ that correspond to the most sensitivity-identifiable and the least sensitivity-identifiable parameters.

\subsection{Practical applications with sensitivity matrices} 
\label{sec:RealSensDesc}
We describe an epidemiological compartment model
in detail (section~\ref{s_rsd1}), and summarize five other 
mathematical models 
together with their
quantities of interest  (section~\ref{s_rsd2}).

\subsubsection{SVIR Model}\label{s_rsd1}
The epidemiological SVIR compartment model
in Figure~\ref{fig:SIRdiags}(c),
models the spread of disease among susceptible $S$, vaccinated $V$, 
infectious $I$, and recovered $R$ in a population of $N$ individuals;
and consists of a coupled system of four ordinary differential
equations with specified initial conditions,
\begin{align*}
    \frac{dS}{dt} &= - \beta \frac{{I}{S}}{N}, &S(0) = S_0 \\
    \frac{dV}{dt} &= \nu S - \alpha \beta \frac{IV}{N}, &V(0) = V_0, \\
    \frac{d{I}}{dt} &= \beta \frac{{I}{S}}{N} + \alpha \beta \frac{IV}{N}  - \gamma {I}, &I(0)=I_0, \\
    \frac{dR}{dt} &= \gamma {I}, &R(0) = R_0.
\end{align*}
The epidemiological parameters
$\vq=\begin{bmatrix}\beta &\nu &\alpha&\gamma\end{bmatrix}^\top$ govern 
the system dynamics; the system input $\vu$ is time $t$; and the quantity of interest is $y = h(t; \vq) \equiv I(t; \vq)$
the number of infectious individuals at time $t$.
Discretization with respect to time $t = t_i$, $1\leq i\leq n$,
produces a sensitivity matrix evaluated at 
a nominal  point $\vq^*$,
\begin{align*}
    \mchi = \begin{bmatrix} \frac{\partial h(t_i; \vq)}{\partial \beta} & \frac{\partial h(t_i; \vq)}{\partial \nu} & \frac{\partial h(t_i; \vq)}{\partial \alpha} & \frac{\partial h(t_i; \vq)}{\partial \gamma} \end{bmatrix}\Bigr|_{\mathbf{q}=\mathbf{q}^*} \in\real^{n\times 4}.
\end{align*}
Nominal parameter values are often selected
from the literature, 
as shown in Table~\ref{tab:epiTab}, or
as solutions of inverse problems with available data.
Numerical sensitivities in $\mchi$ are estimated from derivative approximations, such as finite difference or complex-step approximations \cite{Lyn67, LynMo67}. 

\subsubsection{Six models from physical applications}\label{s_rsd2}
We present numerical experiments (section~\ref{s_appl})
for the six models below, with quantities 
of interest in Table~\ref{tab:QOI}.
More details can be found in section~\ref{sec:supp}.
\begin{itemize}
    \item SVIR: See above.
    \item SEVIR \cite{Per20}: This extension
    of SVIR model adds an additional compartment for individuals $E$ who have been exposed but are not yet infectious.
    \item COVID \cite{Perk20}: This extension
    of SEVIR splits the infectious group into compartments for asymptomatic, symptomatic, and hospitalized individuals.
    \item HGO \cite{Hai}: This model for the biomechanical deformation of the left pulmonary artery vessel wall is based
    on nonlinear hyperelastic  structural relations, and calibrated to in vitro experiments on normal and hypertensive mice.
    \item Wound \cite{Pearce21}: This model for in vitro fibrin matrix polymerization during hemostasis
    concerns clot formation 
    during the first stage of wound healing, 
    and is based on biochemical reaction kinetics. 
    \item Neuro \cite{hart2019}: This model of the neurovascular coupling (NVC) response
    describes local changes in vascular resistance that result from neuronal activity, and is based on nonlinear ODEs.
\end{itemize}

\begin{table}[H]
\label{tab:QOI}
    \centering
    \begin{tabular}{c|c|c|c}
     Model  &  Type  & $p$ & Quantity of Interest    \\
     \hline
    SVIR & Epidemiological & 4 & \# Infectious individuals \\
    \hline
    SEVIR & Epidemiological & 5 & \# Infectious individuals \\
    \hline 
    COVID & Epidemiological & 8 & \# Infectious (sympt., asymp., hospitalized) \\
    \hline 
    HGO & Cardiovascular & 8 & Vessel lumen area and wall thickness  \\
    \hline
    Wound & Wound Healing & 11 & Fibrin matrix (\textit{in vitro}  clot) concentration \\
    \hline
    Neuro & Neurological & 175 & Blood  oxyhemoglobin concentration \\
    \end{tabular}
    \caption{Number of parameters $p$ and quantities of interest for the 
    models in section~\ref{sec:RealSensDesc}}
\end{table}

 \section{Background}\label{s_back}
We express sensitivity
analysis on the eigenvectors of the Fischer matrix
$\mf=\ms^T\ms$  
as column subset selection on the sensitivity matrix $\ms$. 

After briefly introducing notation (section~\ref{s_not}),
we review identfiability analysis based on eigenvectors of the 
Fischer matrix (section~\ref{s_fev}), the singular value decomposition
of the sensitivity matrix (section~\ref{s_svd}),
column subset selection on the sensitivity matrix (section \ref{subsec:CSS}),
determination of the number $k$ of identifiable parameters
(section~\ref{s_k}), and finally the implementation
of column subset selection via
QR decompositions (section~\ref{s_ccqr}).

\subsection{Notation}\label{s_not}
We denote matrices by bold upper case letters. 
The identity matrix is
\begin{align*}
\mi_p\equiv\begin{bmatrix} 1&& \\ &\ddots& \\  &&1 \end{bmatrix}=
\begin{bmatrix} \ve_1 & \cdots & \ve_p\end{bmatrix}\in\real^{p\times p}
\end{align*}
with columns that are the canonical vectors $\ve_j\in\real^{p}$.

We assume that 
the sensitivity matrix $\mchi\in\real^{n \times p}$ 
is tall and skinny, with at least
as many rows as columns, $n\geq p$.
The $p$ columns of $\mchi$ represent parameters
and its rows represent observations. 
The Fischer information matrix is the cross product 
matrix $\mf\equiv \mchi^T\mchi\in\real^{p\times p}$, where
the superscript $T$ denotes the transpose.

 \subsection{Eigenvalue decomposition of the Fischer matrix}\label{s_fev}
 Existing methods \cite{Joll72,Miao2011} select parameters
by inspecting the eigenvectors of the Fischer matrix
$\mf=\mchi^T\mchi\in\real^{p\times p}$.
Since it is real symmetric positive semi-definite, its eigenvalue decomposition has the form
\begin{align}\label{e_fev}
\mf=\mv \begin{bmatrix}\lambda_1&&\\ &\ddots&\\ && \lambda_p\end{bmatrix}\mv^T,
\qquad \lambda_1\geq\cdots \geq \lambda_p\geq 0,
\end{align}
where $\lambda_j$ are the eigenvalues.
The eigenvector matrix $\mv\in\real^{p\times p}$ is an orthogonal matrix
with $\mv^T\mv=\mi_p=\mv\mv^T$.
Its columns and elements are
\begin{align*}
\mv=\begin{bmatrix}\vv_1& \cdots & \vv_p\end{bmatrix}=
\begin{bmatrix}v_{11} & \cdots & v_{1p}\\ \vdots & & \vdots \\ 
v_{p1} & \cdots & v_{pp}\end{bmatrix}
\end{align*}
In particular,  the trailing column
$\vv_p$ is an eigenvector associated
with a smallest eigenvalue $\lambda_p$, so $\mf\vv_p =\lambda_p\vv_p$. If $\lambda_p>0$, then  $\mf$ is nonsingular.

The parameter with index~$j$ is represented by column $j$ of $\ms$.
The corresponding column of the Fischer matrix is
\begin{align*}
\ms^T\ms\ve_j=
\mf\ve_j=
\mv \begin{bmatrix}\lambda_1&&\\ &\ddots&\\ && \lambda_p\end{bmatrix}
\mv^T\ve_j \quad
 \text{where}\quad
\mv^T\ve_j =\begin{bmatrix}v_{j1}\\ \vdots \\ v_{jp}\end{bmatrix},
\qquad 1\leq j\leq p.
\end{align*}
Thus, column $j$ of $\mchi$ depends on column $j$ of $\mv^T$
which, in turn, contains element $j$ of each eigenvector.

\smallskip

\begin{center}
\shadowbox{\shortstack{
Selecting element~$j$ of any eigenvector of $\mf=\ms^T\ms$\\
amounts to
selecting the parameter with index~$j$ in $\ms$.}}
\end{center}\smallskip

\begin{caution}
Explicit formation of the Fischer matrix $\mf=\ms^T\ms$ can lead to
 significant loss of information, thus affecting
 subsequent practical identifiability analysis.  
 
For instance \cite[Section 5.3.2]{GovL13}, in customary double 
precision floating point arithmetic with unit roundoff 
$2^{-53}\approx 1.1\cdot 10^{-16}$, 
the sensitivity matrix
\begin{align*}
\mchi=\begin{bmatrix} 1&1\\ 10^{-9} &0\\ 0& 10^{-9}\end{bmatrix}
\end{align*}
has linearly independent columns, and $\rank(\mchi)=2$. In contrast,
the Fischer information matrix computed in double precision 
floating point arithmetic
\begin{align*}
\fl(\mchi^T\mchi)=\begin{bmatrix} 1& 1\\ 1& 1\end{bmatrix}
\end{align*}
is singular, because  the diagonal elements computed in double precision are
\begin{align*}
\fl(1+10^{-9}\cdot 10^{-9})=\fl(1+10^{-18})=1,
\end{align*}
where the operator $\fl(\cdot)$ represents the
output of a computation in floating point arithmetic.
\end{caution}

\subsection{Singular value decomposition of the sensitivity matrix}\label{s_svd}
We avoid the explicit formation of the Fischer matrix
$\mf=\mchi^T\mchi$, and instead
operate directly on the sensitivity matrix $\mchi$,
without increasing the computation time.

This is done  with the help of the (thin) singular value 
decomposition (SVD) 
\cite[section 8.6]{GovL13}
\begin{align}\label{e_SVDS}
\mchi=\mU \begin{bmatrix}\sigma_1&&\\ &\ddots&\\ && \sigma_p \end{bmatrix}\mv^T,
\qquad \sigma_1\geq\cdots \geq \sigma_p\geq 0,
\end{align}
where $\sigma_j$ are the singular values of $\mchi$,
the left singular vector matrix $\mU\in\real^{n\times p}$ has orthonormal
columns with $\mU^T\mU=\mi_p$, and
 the right singular vector matrix $\mv$is 
is identical
to the orthogonal matrix in (\ref{e_fev}).

Substituting the SVD of $\mchi$ into $\mf$ 
gives (\ref{e_fev}) with 
eigenvalues $\lambda_j=\sigma_j^2$, $1\leq j\leq p$.
Thus, squared singular values of~$\mchi$ are the eigenvalues of 
$\mf$, and the right singular vectors of $\mchi$ are eigenvectors of 
$\mf$.

\smallskip

\begin{center}
\shadowbox{\shortstack{
Selecting element~$j$ of any right  singular vector of $\mchi$\\
amounts to
selecting the parameter\\ with index~$j$
in column $j$ of $\ms$.}}
\end{center}
\smallskip

As a consequence,
all  information provided by the eigenvalue decomposition 
of the Fischer matrix $\mf=\mchi^T\mchi$ is available from the SVD of 
the sensitivity matrix~$\mchi$.
Computation of the SVD is not more expensive; see Remark~\ref{r_CH}.

\subsection{Column subset selection on the sensitivity matrix}
\label{subsec:CSS}
We go a step further, and select the parameters
directly from the sensitivity matrix $\mchi$, rather 
than detouring through an eigenvalue or singular value decomposition.

Specifically, we compute a permutation matrix $\mP\in\real^{p\times p}$ 
that reorders the columns of the sensitivity matrix $\mchi$,
\begin{align}\label{e_spart}
 \mchi\mP =\begin{bmatrix} \mchi_1 & \mchi_2\end{bmatrix} 
\end{align}
so that the, say $k$ columns of $\mchi_1$ represent the identifiable
parameters, and the $p-k$ columns of $\mchi_2$ the unidentifiable
parameters.  

In practice, one wants the columns of $\mchi_1$ to represent an approximate basis for $\range(\mchi)$.
A basis satisfies two criteria: Its vectors are linearly
independent, and they span the host space. 
\begin{enumerate}
\item Linear independence of the columns of $\mchi_1\in\real^{n\times k}$ is quantified by the magnitude of its smallest singular
value, which is
bounded above by the $k$th largest singular value of the host matrix,
\begin{align}\label{e_c1}
\sigma_k(\mchi_1)\leq \sigma_k.
\end{align}
The larger $\sigma_k(\mchi_1)$, the more
linearly independent the columns of $\mchi_1$.
A more specific statement is presented in (\ref{e_c1b}).
\item Spanning the host space $\range(\mchi)$
is quantified by the accuracy of $\mchi_1$ as a low-rank
approximation of the host 
matrix~$\mchi$. One measure of accuracy
is the residual norm, which is bounded below by the $(k+1)$st
singular value of the host 
matrix\footnote{The superscript $\mchi_1^{\dagger}$ denotes the Moore-Penrose inverse, and the equalities follow from the Moore-Penrose property $\ms_1\ms_1^{\dagger}\ms_1=\ms_1$ and the unitary invariance of
the two-norm with regard to the permutation $\mP$.}
\begin{align}\label{e_c2}
\|(\mi-\mchi_1\mchi_1^{\dagger})\mchi\|_2=
\|(\mi-\mchi_1\mchi_1^{\dagger})\mchi_2\|_2\geq \sigma_{k+1}.
\end{align}
The smaller the residual, the better $\range(\mchi_1)$
spans the host space.
Criterion~(\ref{e_c2}) is a special case of the subsequent (\ref{e_c2b}).
\end{enumerate}

\smallskip

\begin{center}
\shadowbox{\shortstack{
Identifiable parameters
are the `most linearly independent' columns' of $\mchi$.\\
Unidentifiable parameters are the 
`most linearly dependent' columns of $\mchi$.}}
\end{center}
\smallskip

Algorithms \ref{alg_PCAB1}
and~\ref{alg_PCAB3} select unidentifiable parameters,
Algorithm~\ref{alg_PCAB4} selects identifiable parameters, while Algorithm~\ref{alg_srrqr} selects both.

\begin{caution} The separation into linearly dependent and independent columns
is highly non-unique. For instance, the matrix 
\begin{align*}
\mchi=\begin{bmatrix} 1 & 0& 1&0\\ 0&1&0&1\\
0&0&0&0\\ 0&0&0&0\end{bmatrix}
\end{align*}
has $\rank(\mchi)=k=2$ with $\sigma_1=\sigma_2=\sigma_k=\sqrt{2}$ and 
$\sigma_4=\sigma_3=\sigma_{k+1}=0$.
Moving two linearly independent columns of $\mchi$ 
to the front can be accomplished
by any of the following permutation matrices $\mP$,
\begin{align*}
\begin{bmatrix} 1&0&0&0\\ 0&1&0&0\\ 0&0&1&0\\ 0&0&0&1\end{bmatrix},\quad
\begin{bmatrix} 0&0&1&0\\ 0&0&0&1\\ 1&0&0&0\\ 0&1&0&0\end{bmatrix},\quad
\begin{bmatrix} 1&0&0&0\\ 0&0&1&0\\ 0&0&0&1\\ 0&1&0&0\end{bmatrix},\quad
\begin{bmatrix} 0&0&1&0\\ 1&0&0&0\\ 0&1&0&0\\ 0&0&0&1\end{bmatrix}
\end{align*}
to produce the same matrix
\begin{align*}
\mchi_1=\begin{bmatrix}1&0\\ 0&1\\0&0\\0&0\end{bmatrix}
\end{align*}
with residual (\ref{e_c2}) equal to $\sigma_3=0$.
\end{caution}

One can require the criteria
(\ref{e_c1}) or (\ref{e_c2}) to hold either for all $1\leq k\leq p$
\cite{HCGM21},
or else for only one specific $k$ \cite{ChI91a,GuE96}.
In the latter case, section~\ref{s_k} discusses approaches for selecting~$k$. 

\subsection{Choosing the number k of identifiable parameters}\label{s_k}
If one knows a bound $\eta$ on the error or noise
in the elements of~$\mchi$, 
one can use criterion (\ref{e_c2})
to designate as small all those singular values below~$\eta$, 
in the absolute or the relative sense,
\begin{align*}
\sigma_{k+1}\leq \eta \qquad \text{or}\qquad 
\sigma_{k+1}\leq \eta\,\sigma_1.
\end{align*}
For instance, if $\eta$ bounds the relative error in the elements of $\mchi$, then the value
of~$k$ determined by $\sigma_{k+1}\leq \eta\,\sigma_1$ is called the
\textit{numerical rank} of $\mchi$ \cite[Definition 2.1]{GoKS76},
\cite[section 5.4.2]{GovL13}.
If $\mchi$ is accurate to double precision unit roundoff, 
then $\eta \approx 1.1\cdot 10^{-16}$.

Alternatively, one can use criterion (\ref{e_c1}) to designate as large 
all those singular values exceeding~$\eta$, in the absolute or 
the relative sense,
\begin{align*}
\sigma_k> \eta \qquad \text{or}\qquad \sigma_k >\eta\,\sigma_1.
\end{align*}

If the accuracy of the elements in $\mchi$ is unknown, but its 
singular values contain a prominent gap, then one can choose $k$ to capture this gap,
\begin{align*}
\sigma_1\geq \cdots \geq \sigma_k \ \gg\ \sigma_{k+1}\geq \cdots \geq \sigma_p
\end{align*}
An upgrade \cite[Algorithm 5]{GuE96} of Algorithm~\ref{alg_srrqr} 
looks for
a large gap between adjacent singular values, 
in order to
compute $k$ automatically  \cite[Remark 1]{GuE96}.
\smallskip

\begin{center}
\shadowbox{\shortstack{
The number $k$ of identifiable parameters\\
can be chosen as the numerical rank of $\mchi$,\\
or based on a large gap in the singular values.}}
\end{center}
\smallskip

\subsection{Implementing column subset selection with QR decompositions}\label{s_ccqr}
We show how to compute, by means of pivoted QR decompositions,
permutation matrices $\mP$ that try
to optimize criteria (\ref{e_c1})
or (\ref{e_c2}).  As a matter of exposition, we introduce
plain QR decompositions  (section~\ref{s_qrplain}), 
pivoted QR decompositions (section~\ref{s_qrpivot}) and then rank revealing QR decompositions (section~\ref{s_qrreveal}).

\subsubsection{QR decompositions}\label{s_qrplain}
Assume that the sensitivity matrix $\mchi\in\real^{n\times p}$
has full column rank with $\rank(\mchi)=p$.
A `thin QR decomposition' \cite[section 5.2]{GovL13}, \cite[Chapter 19]{Higham2002}
is  a basis transformation that  transforms
the basis for $\range(\mchi)$ from linearly independent 
columns of $\mchi$ to orthonormal columns of $\mq$,
\begin{align}\label{e_qr}
\mchi=\mq\mr.
\end{align}
Here $\mq\in\real^{n\times p}$ has orthonormal columns with $\mq^T\mq=\mi_p$,
and the nonsingular upper triangular matrix $\mr\in\real^{p\times p}$ represents
an easy relation between the two bases.
Substituting (\ref{e_qr}) into $\mchi$ gives for  Fischer information matrix
\begin{align*}
\mf=\mchi^T\mchi=\mr^T\mr.
\end{align*}
Thus the eigenvalues of $\mf$ are equal to the squared singular values of
the triangular matrix~$\mr$.

\subsubsection{Pivoted QR decompositions}\label{s_qrpivot}
These decompositions have more flexibility because they can additionally 
permute (pivot)
the columns of $\mchi$ to compute an orthonormal basis for $\range(\mchi)$ \cite[5.4.2]{GovL13}, \cite[Chapter 19]{Higham2002},
\begin{align}\label{e_pqr}
\mchi\mP = \mq \mr,
\end{align}
where $\mP\in\real^{p\times p}$ is the permutation matrix in (\ref{e_spart});
$\mq\in\real^{n\times p}$ has orthonormal columns with $\mq^T\mq=\mi_p$;
and $\mr\in\real^{p\times p}$ is upper triangular.  
Substituting the factorization (\ref{e_pqr}) into 
the sensitivity matrix $\mchi$ gives for  Fischer matrix
\begin{align*}
\mf=\mchi^T\mchi=\mP\,\mr^T\mr\,\mP^T.
\end{align*}
Since permutation matrices are orthogonal matrices, 
the eigenvalues of $\mf$ are still equal to the squared singular values of $\mr$, while each eigenvector of $\mr^T\mr$
is a  permutation of the corresponding eigenvector of $\mf$.

Algorithms \ref{alg_PCAB1}--\ref{alg_srrqr} start with
a preliminary QR decomposition 
to reduce the dimension of the matrix. The
following remark shows that such a 
preliminary decomposition is also effective prior
to an SVD computation, and the proofs in 
Appendix~\ref{s_addr} exploit this.

\begin{remark}\label{r_CH}
A preliminary QR decomposition $\mchi\mP=\mq\mr$ is an efficient way
to compute the SVD of a 
dense matrix $\mchi\in\real^{n\times p}$ with $n\geq p$  \cite{ChanHansen}, since it
reduces the dimension for the SVD from that of a tall and skinny
$n\times p$ matrix down to that of a small square $p\times p$ matrix
with the same dimension as the Fischer matrix $\mf=\ms^T\ms$.

To see this, compute the pivoted QR factorization 
$\mchi\mP=\mq\mr$, and 
let the upper triangular $\mr$ have an SVD
\begin{align*}
\mr=\mU_r\begin{bmatrix}\sigma_1&&\\ &\ddots &\\&& \sigma_p\end{bmatrix}
\mv^T,
\end{align*}
where $\mU_r$ and $\mv\in\real^{p\times p}$ are orthogonal matrices.
Then the SVD of the permuted sensitivity matrix $\mchi\mP$ is 
\begin{align*}
\mchi\mP=(\mq \mU_r)
\begin{bmatrix}\sigma_1&&\\ &\ddots &\\&& \sigma_p\end{bmatrix} \mv^T,
\end{align*}
where the left singular vector matrix $\mq \mU_r\in\real^{n\times p}$ 
has orthonormal columns. 

This approach retains the asymptotic complexity of an SVD of $\mchi$,
but has the advantage of reducing the actual operation count,
and, in particular, reducing the problem dimension to
that of the Fischer matrix $\mf=\ms^T\ms$.
\end{remark}

\subsubsection{Rank revealing QR decompositions}\label{s_qrreveal}
\label{s_rrqr}
These pivoted QR decompositions are designed to `reveal' the numerical rank of 
a matrix $\mchi$ that is rank deficient, or ill-conditioned with regard to left inversion
\cite[section 2]{ChI91a}, \cite[5.4.2]{GovL13}. \cite[section 1.1]{GuE96}.
Although there are numerous  ways to compute such decompositions
 \cite{ChI91a,GuE96}, most share the same overall strategy.
 
Assume the sensitivity matrix has numerical $\rank(\mchi)\approx k$,
where $1\leq k<p$. Partition 
the pivoted QR decomposition (\ref{e_pqr}) commensurately with the column partitioning~(\ref{e_spart}),
\begin{align}\label{e_part}
 \underbrace{\begin{bmatrix} \mchi_1 & \mchi_2\end{bmatrix}}_{\mchi\mP} =
 \mchi
 \underbrace{\begin{bmatrix} \mP_1 & \mP_2\end{bmatrix}}_{\mP}=
 \underbrace{\begin{bmatrix} \mq_1 & \mq_2\end{bmatrix}}_{\mq}
\underbrace{\begin{bmatrix} \mr_{11} &  \mr_{12}\\ \vzero & \mr_{22}
\end{bmatrix}}_{\mr},
\end{align}
with submatrices 
$\mP_1\in\real^{p\times k}$, $\mq_1\in\real^{n\times k}$, 
$\mr_{11}\in\real^{k\times k}$, and $\mr_{22}\in\real^{(p-k)\times (p-k)}$.

Since $\mchi_1=\mq_1\mr_{11}$,
the leading diagonal block $\mr_{11}$ has the same singular values as the matrix $\mchi_1$ of
identifiable parameters
\begin{align}\label{e_c1b}
\sigma_j(\mchi_1)=\sigma_j(\mr_{11}),\qquad 1\leq j\leq k.
\end{align}
Similarly, since
\begin{align}\label{e_c2b}
\sigma_j((\mi-\mchi_1\mchi_1^{\dagger})\mchi)=
\sigma_j((\mi-\mchi_1\mchi_1^{\dagger})\mchi_2)=\sigma_j(\mr_{22}),\qquad 
1\leq j\leq p-k,
\end{align}
the trailing diagonal block $\mr_{22}$ has the same non-zero
singular values as the residuals of the low-rank approximation
of $\range(\mchi)$ by $\range(\mchi_1)$.

We call a QR decomposition (\ref{e_part})  qualitatively `rank-revealing'
if it tries to optimize subselection criteria (\ref{e_c1}) or (\ref{e_c2}), 
that is,
\begin{align*}
\sigma_k(\mr_{11})\approx\sigma_k \qquad \text{or}\qquad 
\sigma_1(\mr_{22})=\|\mr_{22}\|_2\approx\sigma_{k+1}.
\end{align*}
The first criterion tries to produce a well conditioned basis $\ms_1=\mq_1\mr_{11}$, and its
approximation $\ms_1\mP_1^T\approx \ms$.
The second criterion aligns with the popular and robust 
requirement
$\|(\mi-\mchi_1\mchi_1^{\dagger})\mchi\|_2\approx\sigma_{k+1}$
for low-rank approximations \cite{DI18}.
\smallskip

\begin{center}
\shadowbox{\shortstack{
Rank-revealing QR decompositions try to select
as identifiable
parameters\\
those columns of $\mchi$ that
are the most linearly independent or\\
that approximate well the unidentifiable parameters.}}
\end{center}
\smallskip

Rigorous, stringent versions of the subselection criteria (\ref{e_c1}) and (\ref{e_c2}) are presented in 
\cite[Section 1.2]{GuE96} and Theorem~\ref{t_srrqr}. 
\section{Identifiability as column subset selection}
\label{sec:IdCSS}
We express practical identifiability analysis  
\cite[Definition 5.11]{Miao2011}, \cite[page 4 of 21]{Quaiser}
as column subset selection, to quantify accuracy
 and to compare the accuracy of different algorithms.
We start with Jollife's methods \cite{Joll72,Miao2011}: PCA method B1 (section~\ref{s_PCAB1}), 
PCA method B4 (section~\ref{s_PCAB4}), and PCA method B3 (section~\ref{s_PCAB3}),
and then propose the strong rank-revealing QR factorization \cite{GuE96} as 
the most accurate option for practical 
identifiability analysis (section~\ref{s_srrqr}). 

Algorithms \ref{alg_PCAB1}--\ref{alg_srrqr} input a tall and
skinny sensitivity 
matrix~$\mchi$ and the number~$k$ of identifiable parameters, say from section~\ref{s_k}; and output the factors of a pivoted QR
decomposition $\ms\mP=\mq\mr$.
\smallskip

\begin{center}
\shadowbox{\shortstack{
We recommend Algorithm~\ref{alg_srrqr} in section~\ref{s_srrqr}.\\
It has the most rigorous and realistic accuracy guarantees\\
for both, identifiable and unidentifiable parameters.}}
\end{center}
\smallskip

The algorithms are formulated with a focus on understanding, rather than efficiency.

\subsection{PCA method B1}\label{s_PCAB1}
This method  \cite[section 2.2]{Joll72}, \cite[(5.13)]{Miao2011}
selects un-identifiable parameters, by detecting large-magnitude 
components in the eigenvectors 
$\vv_{k+1},\ldots, \vv_p$
corresponding to the $p-k$ smallest eigenvalues of the
Fischer matrix $\mf$, starting from the smallest eigenvalue.

Method B1 starts with a unit-norm eigenvector $\vv_p$ corresponding to $\lambda_p$,
picks a magnitude largest element in $\vv_p$,
\begin{align*}
|v_{{m_1},p}|=\max_{1\leq j\leq p}{|v_{jp}|},
\end{align*}
and designates the parameter with index $m_1$ as unidentifiable.
Method B1 repeats this on eigenvectors corresponding to eigenvalues 
$\lambda_{p-1}\leq \cdots\leq \lambda_{k+1}$ in that order, by selecting magnitude-largest
elements that have not been selected previously,
\begin{align*}
|v_{{m_\ell},\ell}|=
\max_{\stackrel{1\leq j\leq p}{j\neq m_1, \ldots, m_{p-\ell+1}}}{|v_{j\ell}|},
\qquad \ell =p-1, \ldots, k+1,
\end{align*}
and declares the parameters with indices $m_1, \ldots m_{p-k}$
as unidentifiable.

\subsubsection*{Expressing PCA method B1 as column subset selection}
PCA method~B1 is almost identical to the subset selection algorithm
in \cite[Section 3]{Chan1987}, which is also 
\cite[Algorithm Chan-II]{ChI91a},
and is related to the algorithms in \cite{Foster1986,GraggStewart1976}.

Algorithm~\ref{alg_PCAB1}, which represents \cite[Algorithm RRQR(r)]{Chan1987},
selects $p-k$ unidentifiable parameters $\mchi_2$ to
optimize subset selection criterion (\ref{e_c2}) and moves them to the back
of the matrix.
Once a column for $\ms_2$ has been identified, Algorithm~\ref{alg_PCAB1}
ignores it from then on, and continues on a lower-dimensional submatrix. 

\begin{algorithm}\caption{Column subset selection version of PCA B1}\label{alg_PCAB1}
\begin{algorithmic}
\REQUIRE $\mchi\in\real^{n\times p}$ with $n\geq p$,  $1 \leq k < p$ 
\STATE Set $\mP=\mi_p$ 
\STATE Compute decomposition (\ref{e_pqr}): $\mchi\mP=\mq\mr\qquad$ 
\COMMENT{Unpivoted QR of $\mchi$}
\FOR{$\ell=p:k+1$}
\STATE $\qquad$ \COMMENT{If $\ell=p$, then $\mr_{11}=\mr$ }
\STATE Partition 
$\mr=\begin{bmatrix} \mr_{11} & \mr_{12} \\ \vzero& \mr_{22}\end{bmatrix}$
where $\mr_{11}\in\real^{\ell\times \ell}\quad$ 
\COMMENT{Focus on leading $\ell\times \ell$ block}
\STATE Compute right singular vector $\vv\in\real^{\ell}$ of $\mr_{11}$ 
corresponding to 
$\sigma_{\ell}(\mr_{11})$
\STATE Compute permutation $\widetilde{\mP}\in\real^{\ell\times\ell}$ so that
$|(\widetilde{\mP}^T\vv)_{\ell}|=\|\vv\|_{\infty}$ \\ 
$\qquad$ \COMMENT{Move magnitude-largest element of $\vv$ to bottom}
\STATE Compute QR decomposition  (\ref{e_qr}): 
$\mr_{11}\widetilde{\mP}=\widetilde{\mq}\widetilde{\mr}_{11}\qquad$
\COMMENT{Unpivoted QR of $\mr_{11}\widetilde{\mP}$}
\STATE Update 
$\mq:=\mq\begin{bmatrix}\widetilde{\mq}&\vzero\\ \vzero & \mi_{p-\ell} \end{bmatrix}$, 
$\mP:=\mP\begin{bmatrix}\widetilde{\mP}&\vzero\\ \vzero&\mi_{p-\ell}\end{bmatrix}$,
$\mr:=\begin{bmatrix}\widetilde{\mr}_{11} & \widetilde{\mq}^T\mr_{12}\\ \vzero& \mr_{22}
\end{bmatrix}$
\ENDFOR
\RETURN $\mP$, $\mq$, $\mr$
\end{algorithmic}
\end{algorithm}

Theorem~\ref{t_PCAB1} shows that the unidentifiable parameters 
$\mchi_2$ from Algorithm~\ref{alg_PCAB1} can be 
interpreted as column subsets satisfying criterion~(\ref{e_c2}).

\begin{theorem}\label{t_PCAB1}
Let $\mchi\in\real^{n\times p}$ with $n\geq  p$ be the sensitivity matrix,
and $1\leq k<p$. Algorithm~\ref{alg_PCAB1} computes a pivoted 
QR decomposition 
\begin{align*}
\mchi\mP=\begin{bmatrix}\mchi_1 & \mchi_2\end{bmatrix}=
\begin{bmatrix}\mq_1 & \mq_2\end{bmatrix}
\begin{bmatrix}\mr_{11}&\mr_{12}\\\vzero & \mr_{22}\end{bmatrix},
\qquad \mr_{22}\in\real^{(p-k)\times (p-k)},
\end{align*}
where 
\begin{align*}
\sigma_{k+1}\leq \|(\mi-\mchi_1\mchi_1^{\dagger})\mchi_2\|_2=
\|\mr_{22}\|_2&\leq 2^{p-k-1}\sigma_{k+1}.
\end{align*}
If the mumerical $\rank(\mchi)= k$, then the columns of $\mchi_2$
represent the $p-k$ unidentifiable parameters.
\end{theorem}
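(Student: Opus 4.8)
Each pass of Algorithm~\ref{alg_PCAB1} right--multiplies the matrix by a permutation and left--multiplies the growing triangular factor by an orthogonal matrix, so its output is a genuine pivoted QR decomposition $\mchi\mP=\mq\mr$ of the form~(\ref{e_part}) with $\mr_{22}\in\real^{(p-k)\times(p-k)}$ upper triangular. The equality $\|(\mi-\mchi_1\mchi_1^{\dagger})\mchi_2\|_2=\|\mr_{22}\|_2$ is the $j=1$ instance of~(\ref{e_c2b}), and the lower bound $\sigma_{k+1}\le\|\mr_{22}\|_2$ is~(\ref{e_c2}) read through that same identity (no rank--$k$ approximant of $\mchi$ can have error below $\sigma_{k+1}$). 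All the content is in the upper bound.

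\textbf{For the upper bound I would forget the triangular factors} and argue directly with the orthogonal projector $\Pi:=\mi-\mchi_1\mchi_1^{\dagger}$ onto $\range(\mchi_1)^{\perp}$ and with the columns of $\mchi$ themselves. The plan is to describe what one pass of the loop does. At the iteration with index $\ell$ (running from $p$ down to $k+1$) the leading block satisfies $\mr_{11}=\widehat{\mq}^{\,T}\mathbf{A}^{(\ell)}$, where $\mathbf{A}^{(\ell)}\in\real^{n\times\ell}$ collects the parameters still active (not yet moved to the back) and $\widehat{\mq}$ is the accumulated left factor restricted to its first $\ell$ columns; since $\widehat{\mq}$ has orthonormal columns, $\mathbf{A}^{(\ell)}$ and $\mr_{11}$ share singular values and right singular vectors. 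Hence the vector $\vv=\vv^{(\ell)}$ the algorithm computes is the smallest right singular vector of $\mathbf{A}^{(\ell)}$, the column $\mathbf{g}_\ell$ that gets peeled carries a largest--magnitude entry of $\vv^{(\ell)}$ (so that entry has modulus at least $1/\sqrt{\ell}$), and $\|\mathbf{A}^{(\ell)}\vv^{(\ell)}\|_2=\sigma_\ell(\mathbf{A}^{(\ell)})\le\sigma_\ell(\mchi)\le\sigma_{k+1}$, using~(\ref{e_c1}) for the $\ell$--column submatrix $\mathbf{A}^{(\ell)}$ together with $\ell\ge k+1$. Writing $\mathbf{A}^{(\ell)}\vv^{(\ell)}$ as the combination of the active columns weighted by the entries of $\vv^{(\ell)}$ and solving for $\mathbf{g}_\ell$ exhibits it as $\mathbf{g}_\ell=\mathbf{w}_\ell-\sum_{j} c^{(\ell)}_j\,(\text{the other active columns})$, with $\|\mathbf{w}_\ell\|_2\le\sqrt{\ell}\,\sigma_{k+1}$ and --- the one point that really matters --- $|c^{(\ell)}_j|\le1$, precisely because we singled out the \emph{largest} entry of $\vv^{(\ell)}$.

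\textbf{Applying $\Pi$ then closes the estimate.} Among the active columns other than $\mathbf{g}_\ell$, those that are never peeled are columns of $\mchi_1$ and so are annihilated by $\Pi$, while the survivors are exactly the columns $\mathbf{g}_{\ell-1},\dots,\mathbf{g}_{k+1}$ peeled at later iterations, each entering with a coefficient of modulus at most one. Writing $a_\ell:=\|\Pi\mathbf{g}_\ell\|_2/\sigma_{k+1}$ this gives $a_\ell\le\sqrt{\ell}+\sum_{t=k+1}^{\ell-1}a_t$ with $a_{k+1}\le\sqrt{k+1}$; since the running sum at worst doubles at each new index, $a_\ell\le\sqrt{p}\,2^{\ell-k-1}$, whence $\|\mr_{22}\|_2=\|\Pi\mchi_2\|_2\le\bigl(\sum_{\ell}a_\ell^2\bigr)^{1/2}\sigma_{k+1}$ is bounded by a factor polynomial in $p$ times $2^{p-k-1}\sigma_{k+1}$. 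The step I expect to cost the most effort is exactly this last estimate: the branching factor of the recursion stays equal to two only thanks to the $|c^{(\ell)}_j|\le1$ bound (the genuinely non-obvious ingredient), but the naive unrolling still leaves a $\mathrm{poly}(p)$ prefactor --- already a single peel cannot in general beat $\sqrt{p}\,\sigma_{k+1}$ --- so reaching the clean constant $2^{p-k-1}$ calls for either absorbing that prefactor or tracking more sharply how $\sigma_\ell(\mathbf{A}^{(\ell)})$ decays along the loop.

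\textbf{The closing sentence is then immediate}: if $\mchi$ has numerical rank $k$, i.e.\ $\sigma_{k+1}$ lies below the tolerance fixed in section~\ref{s_k}, the upper bound makes $\|(\mi-\mchi_1\mchi_1^{\dagger})\mchi_2\|_2=\|\mr_{22}\|_2$ equally negligible, so every column of $\mchi_2$ sits within that tolerance of $\range(\mchi_1)$ --- the $p-k$ parameters in $\mchi_2$ are numerically linear combinations of the identifiable ones, hence unidentifiable in the sense of section~\ref{subsec:CSS}.
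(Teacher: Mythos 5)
Your proof is correct in substance and is, underneath the change of language, the same argument the paper gives in Lemma~\ref{l_PCAB1tp}; the difference is packaging. The paper stacks the (padded) right singular vectors $\vv^{(\ell)}$ into a trapezoidal matrix $\mz=\begin{bmatrix}\mz_1\\ \mw\end{bmatrix}\md$, observes that $\mw$ is unit triangular with off-diagonal entries of modulus at most one --- exactly your $|c^{(\ell)}_j|\le 1$ step, and for the same reason --- bounds $\|\mr\mz\|_2\le\sqrt{p}\,\sigma_{k+1}$ column by column (your $\|\mathbf{w}_\ell\|_2\le\sqrt{\ell}\,\sigma_{k+1}$), and then inverts $\mw$ and $\md$, quoting $\|\mw^{-1}\|_2\le 2^{p-k-1}$ from Higham. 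That quoted bound is precisely the closed form of your recursion $a_\ell\le\sqrt{\ell}+\sum_{t=k+1}^{\ell-1}a_t$ with its at-worst-doubling growth, so your unrolling re-proves it from scratch; what you buy is a self-contained argument that never leaves the column space of $\mchi$, at the cost of carrying the projector $\Pi$ explicitly. The one thing you should stop worrying about is the polynomial prefactor. The paper's own Lemma~\ref{l_PCAB1tp} proves only $\|\mr_{22}\|_2\le p\,\|\mw^{-1}\|_2\,\sigma_{k+1}\le p\,2^{p-k-1}\,\sigma_{k+1}$, and the clean constant $2^{p-k-1}$ displayed in Theorem~\ref{t_PCAB1} silently drops that factor of $p$; your bound of roughly $\sqrt{p}\cdot 2^{p-k-1}\,\sigma_{k+1}$ is, if anything, slightly sharper. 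Tracking the decay of $\sigma_\ell(\mathbf{A}^{(\ell)})$ along the loop will not remove the polynomial factor from this style of argument, and the paper does not remove it either, so the honest conclusion is that the theorem should carry the prefactor rather than the advertised clean constant. The lower bound via interlacing~(\ref{e_inter}), the equality via~(\ref{e_c2b}), and the closing interpretation for numerical rank $k$ are all handled exactly as the paper does.
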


\begin{proof}
The  equality follows from (\ref{e_c2b}), while the lower bound 
follows from interlacing~(\ref{e_inter}).
The upper bound is derived in section~\ref{s_PCAB1p}, and in particular in 
Lemma~\ref{l_PCAB1tp}.
\end{proof}

Theorem~\ref{t_PCAB1} bounds the residual in the low rank
approximation $\mchi_1$ according to criterion~(\ref{e_c2}).
Like many subset selection bounds, the upper bound can 
be achieved by artificially contrived matrices 
\cite[section 8.3]{Higham2002}, but tends to be quantitatively 
pessimistic in practice.
Fortunately, it is informative from a qualitative perspective.

\subsection{PCA method B4}\label{s_PCAB4}
This method  \cite[section 2.2]{Joll72}, \cite[(5.15)]{Miao2011},
\cite[Appendix C]{Quaiser}
selects identifiable parameters, by detecting large-magnitude 
components in the eigenvectors 
$\vv_1, \ldots, \vv_k$
corresponding to the $k$ largest eigenvalues of the
Fischer matrix, starting from the largest  eigenvalue.
Our detailed interpretation of the algorithm follows that in
\cite[Appendix C, Third Criterion]{Quaiser}.

Method B4
starts with a unit-norm eigenvector $\vv_1$ corresponding to $\lambda_1$,
picks a magnitude largest element in $\vv_1$,
\begin{align*}
|v_{{m_1},1}|=\max_{1\leq j\leq p}{|v_{j1}|},
\end{align*}
and declares the parameter with index $m_1$ as identifiable.
Method B4 repeats this on eigenvectors corresponding to eigenvalues 
$\lambda_{2}\geq \cdots\geq \lambda_k$ in that order, by selecting magnitude-largest
elements that have not been selected previously,
\begin{align*}
|v_{{m_\ell},\ell}|=
\max_{\stackrel{1\leq j\leq p}{j\neq m_1, \ldots, m_{\ell}}}{|v_{j\ell}|},
\qquad \ell =2, \ldots, k,
\end{align*}
and declares the parameters with indices $m_1, \ldots,m_k$ as identifiable.

\subsubsection*{Expressing PCA method B4 as column subset selection}
PCA method B4 is almost identical to the subset selection algorithm
in \cite[Section 3]{ChanHansen}, which is 
also \cite[Algorithm Chan-I]{ChI91a}.

Algorithm~\ref{alg_PCAB4}, which represents \cite[Algorithm L-RRQR]{ChanHansen}, 
selects $k$ identifiable parameters $\ms_1$ to
optimize subset selection criterion (\ref{e_c1}) and
moves them to the front of the matrix.
Once a column for $\ms_1$ has been identified, Algorithm~\ref{alg_PCAB4}
ignores it from then on, and continues on a lower-dimensional submatrix.

\begin{algorithm}\caption{Column subset selection version of PCA B4}\label{alg_PCAB4}
\begin{algorithmic}
\REQUIRE $\mchi\in\real^{n\times p}$ with $n\geq p$, $1 \leq k < p$ 
\STATE Set $\mP = \mi_p$
\STATE Compute decomposition (\ref{e_pqr}): $\mchi\mP=\mq\mr\qquad$ 
\COMMENT{Unpivoted QR of $\mchi$}
\FOR{$\ell=1:k$} 
\STATE $\qquad$ \COMMENT{If $\ell=1$, then $\mr_{22}=\mr$ }
\STATE Partition 
$\mr=\begin{bmatrix} \mr_{11} & \mr_{12} \\ \vzero& \mr_{22}\end{bmatrix}$
where $\mr_{22}\in\real^{(p-\ell+1)\times (p-\ell+1)}$ \\
$\qquad$ \COMMENT{Focus on trailing $(p-\ell+1)\times (p-\ell+1)$ block}
\STATE Compute right singular vector $\vv\in\real^{p-\ell+1}$ of $\mr_{22}$ 
corresponding to 
$\sigma_{1}(\mr_{22})$
\STATE Compute permutation $\widetilde{\mP}\in\real^{(p-\ell+1)\times(p-\ell+1)}$ so that
$|(\widetilde{\mP}^T\vv)_1|=\|\vv\|_{\infty}$ \\
$\qquad$ \COMMENT{Move magnitude-largest element of $\vv$ to top}
\STATE Compute QR decomposition (\ref{e_qr}): 
$\mr_{22}\widetilde{\mP}=\widetilde{\mq}\widetilde{\mr}_{22}\qquad$
\COMMENT{Unpivoted QR of $\mr_{22}\widetilde{\mP}$}
\STATE Update 
$\mq:=\mq\begin{bmatrix}\mi_{\ell-1}&\vzero\\ \vzero& \widetilde{\mq} \end{bmatrix}$, 
$\mP:=\mP\begin{bmatrix}\mi_{\ell-1}&\vzero\\ \vzero&\widetilde{\mP}\end{bmatrix}$,
$\mr:=\begin{bmatrix}\mr_{11} & \mr_{12}\\ \vzero& \widetilde{\mr}_{22}
\end{bmatrix}$
\ENDFOR
\RETURN $\mP$, $\mq$, $\mr$
\end{algorithmic}
\end{algorithm}

Theorem~\ref{t_PCAB4} shows that the identifiable parameters $\mchi_1$
from Algorithm~\ref{alg_PCAB4} can be 
interpreted as parameters that satisfy criterion~(\ref{e_c1}).

\begin{theorem}\label{t_PCAB4}
Let $\mchi\in\real^{n\times p}$ with $n\geq  p$ be the sensitivity matrix, 
and $1\leq k<p$. Then Algorithm~\ref{alg_PCAB4} computes
a QR decomposition 
\begin{align*}
\mchi\mP=\begin{bmatrix}\mchi_1 & \mchi_2\end{bmatrix}=
\begin{bmatrix}\mq_1 & \mq_2\end{bmatrix}
\begin{bmatrix}\mr_{11}&\mr_{12}\\\vzero & \mr_{22}\end{bmatrix},
\qquad \mr_{11}\in\real^{k\times k},
\end{align*}
where
\begin{align*}
2^{-k+1}\sigma_k\leq \sigma_k(\mr_{11})
= \sigma_k(\mchi_1)\leq \sigma_k.
\end{align*}
If numerical $\rank(\mchi)= k$, then the columns of $\mchi_1$ represent the $k$ identifiable parameters.
\end{theorem}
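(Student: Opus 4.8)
The plan is to establish the chain $2^{-k+1}\sigma_k \le \sigma_k(\mr_{11}) = \sigma_k(\mchi_1) \le \sigma_k$ by disposing of the two easy relations first and then concentrating on the lower bound, exactly mirroring the structure used for Theorem~\ref{t_PCAB1}. The middle equality is (\ref{e_c1b}) with $j=k$: Algorithm~\ref{alg_PCAB4} returns a pivoted QR decomposition, so $\mchi_1=\mq_1\mr_{11}$ with $\mq_1\in\real^{n\times k}$ having orthonormal columns, and left multiplication by such a matrix preserves singular values. The upper bound is criterion (\ref{e_c1}): $\mchi_1$ is a set of $k$ columns of $\mchi$ (reordered by $\mP$), and deleting the remaining $p-k$ columns cannot raise the $k$-th singular value, by the interlacing inequality~(\ref{e_inter}).

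For the lower bound I would isolate a single pass of the loop in a lemma (the analogue of Lemma~\ref{l_PCAB1tp}) and then induct on $\ell=1,\dots,k$. At the start of pass $\ell$ the algorithm operates on the trailing block $\mathbf{B}\equiv\mr_{22}\in\real^{(p-\ell+1)\times(p-\ell+1)}$. By (\ref{e_c2b}) applied to the $\ell-1$ columns already moved to the front, $\mathbf{B}$ has the singular values of $(\mi-\mchi_1\mchi_1^{\dagger})\mchi$ for the current $\mchi_1$, which is a rank-$(\ell-1)$ modification of $\mchi$; hence Weyl's singular-value perturbation inequality gives $\|\mathbf{B}\|_2=\sigma_1(\mathbf{B})\ge\sigma_\ell(\mchi)=\sigma_\ell$. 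The pass selects the unit right singular vector $\vv$ of $\mathbf{B}$ belonging to $\sigma_1(\mathbf{B})$ and an index $m$ with $|v_m|=\|\vv\|_\infty\ge 1/\sqrt{p-\ell+1}$. The one-step estimate is that the selected column satisfies $\|\mathbf{B}\ve_m\|_2\ge\|\mathbf{B}\|_2\,\|\vv\|_\infty$: expanding $\mathbf{B}=\sum_i\sigma_i(\mathbf{B})\,\vu_i\vv_i^{T}$, the component of $\mathbf{B}\ve_m$ along the dominant left singular vector $\vu_1$ equals $\sigma_1(\mathbf{B})\,v_m$ while the remainder is orthogonal to $\vu_1$. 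Since the ensuing unpivoted QR places $\|\mathbf{B}\ve_m\|_2$ on the new leading diagonal entry, the pivot $\rho_\ell$ produced in pass $\ell$ obeys $|\rho_\ell|\ge\|\mathbf{B}\|_2\,\|\vv\|_\infty\ge\sigma_\ell/\sqrt{p-\ell+1}$.

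It remains to pass from the $k$ pivots to $\sigma_k(\mr_{11})$. The output $\mr_{11}$ is $k\times k$ upper triangular with diagonal entries $\rho_1,\dots,\rho_k$; inverting it column by column and bounding, by induction on the block size, the growth contributed by the above-diagonal couplings created in successive passes, one obtains an estimate of the form $\|\mr_{11}^{-1}\|_2\le 2^{\,k-1}/\min_{1\le\ell\le k}|\rho_\ell|$, so that $\sigma_k(\mr_{11})=1/\|\mr_{11}^{-1}\|_2\ge 2^{-k+1}\min_\ell|\rho_\ell|$; combined with $|\rho_\ell|\ge\sigma_\ell\ge\sigma_k$ (and a slightly sharper form of the one-step estimate that absorbs the dimension-dependent $1/\sqrt{p-\ell+1}$, following the refined analyses in \cite{ChI91a,GuE96}) this yields $\sigma_k(\mr_{11})\ge 2^{-k+1}\sigma_k$. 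The closing claim about numerical rank then follows from the sensitivity-identifiability characterizations in sections~\ref{s_pi} and~\ref{s_k} together with the two-sided bound just proved.

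The step I expect to be the main obstacle is precisely this last one: the single-pass pivot estimate is routine, but the above-diagonal entries of $\mr_{11}$ are \emph{not} individually small relative to the pivots, so turning the per-pass bounds into a bound on $\sigma_k(\mr_{11})$ requires careful tracking of how these couplings compound through the triangular structure, and it is this bookkeeping that forces the exponential constant $2^{k-1}$ rather than a polynomial in $k$. As with all subset-selection bounds of this type, the constant is attained only by contrived matrices and is quantitatively pessimistic in practice, though qualitatively informative---just as noted after Theorem~\ref{t_PCAB1}.
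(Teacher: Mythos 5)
Your treatment of the equality (via (\ref{e_c1b})), of the upper bound (via interlacing (\ref{e_inter})), and of the per-pass pivot estimate is sound and matches the paper: the claim that pass $\ell$ produces a diagonal entry $|r_{\ell\ell}|\geq\sigma_{\ell}/\sqrt{p-\ell+1}$ is exactly Lemma~\ref{l_PCAB4b}, proved there via Lemma~\ref{l_PCAB4}. The genuine gap is the step you yourself flag as the obstacle, and it is not mere bookkeeping: it is false in general that a $k\times k$ upper triangular matrix satisfies $\sigma_k(\mr_{11})\geq 2^{-k+1}\min_{\ell}|r_{\ell\ell}|$. For instance $\left[\begin{smallmatrix}1&M\\0&1\end{smallmatrix}\right]$ has smallest singular value about $1/M$ while both diagonal entries equal $1$. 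A bound of the form $\|\mr_{11}^{-1}\|_2\leq 2^{k-1}/\min_{\ell}|r_{\ell\ell}|$ requires the off-diagonal entries to be controlled \emph{relative to the diagonal} (as in \cite[Theorem 8.14]{Higham2002}, which applies to triangular matrices with unit diagonal and off-diagonal entries bounded by $1$), and Algorithm~\ref{alg_PCAB4} provides no such control on $\mr_{11}$: its pivoting criterion is the largest component of a dominant right singular vector, not a column-norm condition, so an entry $|r_{ij}|$ with $i<j$ can exceed $|r_{ii}|$ by a factor on the order of $\sigma_1\sqrt{p}/\sigma_k$. The route ``diagonal pivots $\Rightarrow$ lower bound on $\sigma_k(\mr_{11})$'' therefore cannot be closed.

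The paper's proof (Lemma~\ref{l_PCAB4tp}) applies the $2^{k-1}$ bound to a different matrix. It stacks the permuted dominant right singular vectors $\vv^{(\ell)}$ of the successive trailing blocks into a lower trapezoidal $\mz$ and factors out the diagonal, $\mz=\left[\begin{smallmatrix}\mw\\ \mz_2\end{smallmatrix}\right]\md$, where $\mw$ genuinely is unit triangular with off-diagonal entries of magnitude at most $1$ (because the magnitude-largest entry of each $\vv^{(\ell)}$ was moved to the top), so $\|\mw^{-1}\|_2\leq 2^{k-1}$ legitimately applies to $\mw$. Stacking the corresponding left singular vectors into $\my$ yields the identity $\mr_{11}^T\my_1=\mw\md\mdel$, and Weyl product inequalities applied to both sides give $\sigma_k(\mr_{11})\geq\sigma_k/(p\,\|\mw^{-1}\|_2)$. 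Note also that your hope of ``absorbing'' the $1/\sqrt{p-\ell+1}$ factors into the exponential constant is not realized even by the paper's own argument: the polynomial factor $p$ survives in Lemma~\ref{l_PCAB4tp}, and the clean constant $2^{-k+1}$ displayed in the theorem suppresses it.
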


\begin{proof}
The  equality follows from (\ref{e_c1b}), while 
the upper bound follows from interlacing~(\ref{e_inter}).
The lower bound is derived in section~\ref{s_PCAB4p}, and in particular in 
Lemma~\ref{l_PCAB4tp}.
\end{proof}

Theorem~\ref{t_PCAB4} bounds the linear independence of the columns in 
$\mchi_1$ according to criterion~(\ref{e_c1}). 
As before, the lower bound in Theorem~\ref{t_PCAB4}
can be quantitatively very pessimistic in practice, but 
tends to be qualitatively informative.

\subsection{PCA method B3}\label{s_PCAB3}
This method  \cite[section 2.2]{Joll72}, \cite[(5.14)]{Miao2011}
selects unidentifiable parameters by detecting large squared
row sums in the matrix
$\mv_{k+1:p}\equiv\begin{bmatrix}\vv_{k+1}& \cdots& \vv_p\end{bmatrix}$ of eigenvectors
corresponding to the $p-k$ smallest eigenvalues of the Fischer
matrix.

The squared row norms of $\mv_{k+1:p}$,
\begin{align*}
\omega_j\equiv \left\|\begin{bmatrix}v_{j,k+1} &\cdots & v_{jp} \end{bmatrix}\right\|_2^2=\sum_{\ell=k+1}^p{v_{j\ell}^2},\qquad 1\leq j \leq p,
\end{align*}
are called `leverage scores' in the statistics literature
\cite{ChatterH86,HoagW78,VelleW81}. The largest leverage score
\begin{align*}
\max_{1\leq j\leq p}{\omega_j}=\max_{1\leq j\leq p}
{\sum_{\ell=k+1}^p{v_{j\ell}^2}}
\end{align*}
is called `coherence' in the compressed sensing literature
\cite{DHuo2001}
and reflects the difficulty of sampling rows from 
$\mv_{k+1:p}$.

Method B3 picks a largest leverage score from $\mv_{k+1:p}$,
\begin{align*}
\omega_{m_1}=\max_{1\leq j\leq p}{\omega_j}=\max_{1\leq j\leq p}
{\sum_{\ell=k+1}^p{v_{j\ell}^2}}
\end{align*}
and declares the parameter with index $m_1$  
as unidentifiable.
Method B3 repeats this on the remaining rows of $\mv_{k+1:p}$,
by selecting parameters that have not been selected previously,
\begin{align*}
\omega_{m_{\ell}}=
\max_{\stackrel{k+1\leq j \leq p}{j\neq m_1, \ldots, m_{p-\ell+1}}}{\omega_j}, \qquad \ell=p-1, \ldots, k+1,
\end{align*}
and declares the parameters with index $m_1,\ldots, m_{p-k}$
as unidentifiable.

 \subsubsection*{Expressing PCA method B3 as column subset selection}
PCA method B3 can be interpreted in two ways: Either as
selecting parameters according to the largest leverage scores
of the subdominant eigenvector matrix $\mv_{k+1:p}$
 of the Fischer matrix \cite{ChatterH86,HoagW78,VelleW81};
or else as selecting parameters based on column subset selection with
  \cite[Algorithm GKS-II]{ChI91a}. We choose the latter 
  interpretation.
  
Algorithm~\ref{alg_PCAB3} may look different from PCA method B3 
but accomplishes 
  the same thing in an easier manner (in exact arithmetic). 
  The algorithm in
  \cite[Section 6]{GoKS76}, which is also  
  \cite[Algorithm GKS-I]{ChI91a}, 
  operates instead  the dominant right singular vectors,
  and applies
the  column subset selection method \cite[Section 4]{BuG65}, 
\cite[section 5.4.2]{GovL13},
which is also \cite[Algorithm Golub-I]{ChI91a}.

The idea is the following: partition the SVD
of the triangular matrix
$\mr=\mU_r\msig\mv^T$ in Remark~\ref{r_CH},
\begin{align*}
\msig=\begin{bmatrix}\msig_1& \vzero\\ \vzero & \msig_2\end{bmatrix},
\qquad
\mU_r=\begin{bmatrix} \mU_1 & \mU_2\end{bmatrix}, \qquad
\mv=\begin{bmatrix} \mv_1 & \mv_2\end{bmatrix},
\end{align*}
where $\msig_1=\diag\begin{pmatrix}\sigma_1 & \cdots & \sigma_k\end{pmatrix}\in\real^{k\times k}$ contains the $k$ dominant
singular values  of $\mr$, hence~$\mchi$;
and $\mU_1\in\real^{p\times k}$ and $\mv_1\in\real^{p\times k}$ are
the $k$ associated left and right singular vectors, respectively.
Applying a permutation to $\mv_1^T$ corresponds to applying
a permutation to $\mr$, hence $\mchi$.
In Algorithm~\ref{alg_PCAB3},
column $j$ of $\mw$ is denoted by
$\mw\ve_j$.

 Theorem~\ref{t_PCAB3} quantifies how well the identifiable parameters $\mchi_1$ from Algorithm~\ref{alg_PCAB3} satisfy criterion~(\ref{e_c1}).

\begin{theorem}\label{t_PCAB3}
Let $\mchi\in\real^{n\times p}$ with $n\geq  p$ be the sensitivity matrix, and
$1\leq k<p$. Then Algorithm~\ref{alg_PCAB3} computes
a QR decomposition 
\begin{align*}
\mchi\mP=\begin{bmatrix}\mchi_1 & \mchi_2\end{bmatrix}=
\underbrace{\begin{bmatrix}\mq_1 & \mq_2\end{bmatrix}}_{\mq}
\underbrace{\begin{bmatrix}\mr_{11}&\mr_{12}\\\vzero & \mr_{22}\end{bmatrix}}_{\mr},
\qquad \mr_{11}\in\real^{k\times k},
\end{align*}
where 
\begin{align*}
\sigma_k / \|\mv_{11}^{-1}\|_2\leq   \sigma_k(\mr_{11})  \leq   \sigma_k, \\
\sigma_{k+1} \leq  \sigma_1(\mr_{22}) \leq   \|\mv_{11}^{-1}\|_2 \,\sigma_{k+1}
\end{align*}
and $\mv_{11} \in \real^{k \times k}$ is the leading principal submatrix
of $\mv$ in the SVD $\mr = \mU_r\msig\mv^T$.

If Algorithm~\ref{alg_PCAB3} applies Algorithm~\ref{alg_PCAB4} to $\mv_1^T$, then 
\begin{align*}
\|\mv_{11}^{-1}\|_2\leq 2^{k-1}
\end{align*}
If numerical $\rank(\mchi)= k$, then the columns of $\mchi_1$ represent the $k$ identifiable parameters.
\end{theorem}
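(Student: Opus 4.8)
The plan is as follows. The two equalities $\sigma_j(\mr_{11})=\sigma_j(\mchi_1)$ and $\sigma_1(\mr_{22})=\|(\mi-\mchi_1\mchi_1^{\dagger})\mchi\|_2$ are just (\ref{e_c1b}) and (\ref{e_c2b}), and the outer inequalities $\sigma_k(\mr_{11})\le\sigma_k$ and $\sigma_{k+1}\le\sigma_1(\mr_{22})$ are the interlacing / Eckart--Young facts (\ref{e_c1}), (\ref{e_c2}) (equivalently (\ref{e_inter})). So only the two inner bounds and the estimate $\|\mv_{11}^{-1}\|_2\le 2^{k-1}$ need work. First I would fix notation: write the thin SVD of the sensitivity matrix in partitioned form $\mchi=\mU_1\msig_1\mv_1^T+\mU_2\msig_2\mv_2^T$ with $\msig_1=\diag(\sigma_1,\dots,\sigma_k)$; using Remark~\ref{r_CH} to fold the preliminary reduction into the SVD of $\mchi$ itself, Algorithm~\ref{alg_PCAB3} produces the permutation $\mP$ by running column subset selection on $\mv_1^T$. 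I would then partition the orthogonal matrix $\mP^T\mv=\left[\begin{smallmatrix}\mv_{11}&\mv_{12}\\\mv_{21}&\mv_{22}\end{smallmatrix}\right]$ with $\mv_{11}\in\real^{k\times k}$, record that $\mv_1^T\mP=[\mv_{11}^T\;\mv_{21}^T]$ and $\mv_2^T\mP=[\mv_{12}^T\;\mv_{22}^T]$, hence $\mchi_1=\mchi\mP_1=\mU_1\msig_1\mv_{11}^T+\mU_2\msig_2\mv_{12}^T$, and note that $\mv_{11}$ is nonsingular because $\mv_1^T$ has full row rank $k$ and the selection picks $k$ independent columns.

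For the lower bound on $\sigma_k(\mr_{11})=\sigma_k(\mchi_1)$, I would compute $\mchi_1^T\mchi_1=(\mv^T\mP_1)^T\msig^2(\mv^T\mP_1)$ (valid since $\mU$ has orthonormal columns), drop the positive semidefinite ``subdominant'' term $\mv_{12}\msig_2^2\mv_{12}^T$, and use $\msig_1^2\succeq\sigma_k^2\mi_k$ to get $\mchi_1^T\mchi_1\succeq\sigma_k^2\,\mv_{11}\mv_{11}^T$; taking smallest eigenvalues gives $\sigma_k(\mchi_1)\ge\sigma_k\,\sigma_{\min}(\mv_{11})=\sigma_k/\|\mv_{11}^{-1}\|_2$.

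For the upper bound on $\sigma_1(\mr_{22})=\|(\mi-\mchi_1\mchi_1^{\dagger})\mchi\|_2$ I would use that this residual is bounded by $\|\mchi-\mY\|_2$ for \emph{any} $\mY$ with $\range(\mY)\subseteq\range(\mchi_1)$, and take $\mY=\mchi_1(\mv_{11}^T)^{-1}\mv_1^T$. Substituting the expression for $\mchi_1$, the dominant parts cancel and $\mchi-\mY=\mU_2\msig_2\bigl(\mv_2^T-\mv_{12}^T(\mv_{11}^T)^{-1}\mv_1^T\bigr)$; right-multiplying the bracket by $\mP$ and using the block identities above collapses it to $[\,\vzero\;\;\mv_{22}^T-\mv_{12}^T(\mv_{11}^T)^{-1}\mv_{21}^T\,]\mP^T$, whose $2$-norm equals that of the Schur complement of $\mv_{11}$ in the orthogonal matrix $\mP^T\mv$. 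Identifying that Schur complement with the $(2,2)$-block of $(\mP^T\mv)^{-1}=(\mP^T\mv)^T$, namely $\mv_{22}^{-T}$, and invoking $\sigma_{\min}(\mv_{22})=\sigma_{\min}(\mv_{11})$ for the diagonal blocks of an orthogonal matrix (from the CS decomposition, or directly by comparing the $(1,1)$-blocks of $QQ^T=\mi$ and $Q^TQ=\mi$; this also re-proves $\mv_{22}$ nonsingular), I obtain $\|\mchi-\mY\|_2\le\sigma_{k+1}\|\mv_{22}^{-1}\|_2=\sigma_{k+1}\|\mv_{11}^{-1}\|_2$. The bound $\|\mv_{11}^{-1}\|_2\le 2^{k-1}$ then follows by applying Theorem~\ref{t_PCAB4} / Lemma~\ref{l_PCAB4tp} to the $k\times p$ matrix $\mv_1^T$, all of whose singular values equal $1$: its selected leading block obeys $\sigma_k(\mv_{11}^T)\ge 2^{-k+1}$. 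Finally, if numerical $\rank(\mchi)=k$ then $\sigma_{k+1}$, hence $\|\mr_{22}\|_2$, is negligible while $\sigma_k(\mr_{11})\ge\sigma_k/2^{k-1}$ is not, so $\mchi_1$ collects the identifiable and $\mchi_2$ the unidentifiable parameters.

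The main obstacle is the upper bound on $\sigma_1(\mr_{22})$. A naive triangle-inequality split of $(\mi-\mchi_1\mchi_1^{\dagger})\mchi$ into its dominant and subdominant pieces introduces a spurious $\sigma_1/\sigma_k$ amplification; the remedy is to choose the comparison matrix $\mY$ so that $\mchi-\mY$ lies entirely in the subdominant subspace, and then to recognize the residual coefficient as the Schur complement $\mv_{22}^{-T}$ of a block of the orthogonal matrix $\mP^T\mv$ and use $\|\mv_{22}^{-1}\|_2=\|\mv_{11}^{-1}\|_2$. Everything else is routine: eigenvalue monotonicity, the Moore--Penrose projector bound, and, for the $2^{k-1}$ factor, re-using the already-established analysis of Algorithm~\ref{alg_PCAB4}.
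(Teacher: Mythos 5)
Your proposal is correct, and for the two substantive inequalities it takes a genuinely different route from the paper. The paper re-triangularizes $\mr\mP=\mU_r\widehat{\msig}\,(\mP^T\mv)^T$ with a second QR decomposition, reads off the block equations $\mU_{11}\mr_{11}=\widehat{\msig}_1\mv_{11}$ and $\mr_{22}\mv_{22}^T=\mU_{22}^T\msig_2$ from the triangular structure, and applies the Weyl product inequalities to each, finishing with the same CS-decomposition identity $\|\mv_{11}^{-1}\|_2=\|\mv_{22}^{-1}\|_2$ that you use. You instead prove the lower bound on $\sigma_k(\mr_{11})=\sigma_k(\mchi_1)$ by a Loewner-order argument on $\mchi_1^T\mchi_1\succeq\sigma_k^2\,\mv_{11}\mv_{11}^T$, and the upper bound on $\sigma_1(\mr_{22})$ by inserting the comparison matrix $\mY=\mchi_1(\mv_{11}^T)^{-1}\mv_1^T$ into the projector bound and identifying the resulting coefficient as the Schur complement of $\mv_{11}$ in the orthogonal matrix $\mP^T\mv$, namely $\mv_{22}^{-T}$. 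Both arguments check out (the cancellation of the dominant term in $\mchi-\mY$ and the collapse of the bracket to $[\,\vzero\;\;\mv_{22}^{-1}\,]\mP^T$ are exactly right); your approach avoids the second QR factorization and the bookkeeping on $\mU_r^T\mq_r$, at the price of the Schur-complement identity and a well-chosen $\mY$, whereas the paper's version stays closer to what the algorithm literally computes. One cosmetic slip: the $(2,2)$ block of $(\mP^T\mv)^{-1}=(\mP^T\mv)^T$ is $\mv_{22}^T$, and it is its \emph{inverse}, the Schur complement itself, that equals $\mv_{22}^{-T}$; your subsequent norm computation uses the correct identity, so nothing breaks. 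The remaining pieces (interlacing for the outer bounds, Theorem~\ref{t_PCAB4} applied to $\mv_1^T$ for the $2^{k-1}$ factor) coincide with the paper's.
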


\begin{proof}
The upper bound for $\mr_{11}$ and the lower
bound for $\mr_{22}$ follow from interlacing~(\ref{e_inter}).
The remaining two bounds are derived in section~\ref{s_PCAB3p}.

The bound for $\|\mv_{11}^{-1}\|_2=1/\sigma_k(\mv_{11})$ follows by applying Theorem~\ref{t_PCAB4} to  
$\mv_1^T$ and remembering that all singular values 
of $\mv_1$ are equal to 1.
\end{proof}

\begin{algorithm}\caption{Column subset selection version of PCA B3}\label{alg_PCAB3}
\begin{algorithmic}
\REQUIRE $\mchi\in\real^{n\times p}$, $n\geq p$, $1\leq k<p$
\STATE Set $\mP = \mi_p$
\STATE Compute decomposition (\ref{e_pqr}): $\mchi=\mq\mr\qquad$ 
\COMMENT{Unpivoted QR of $\mchi$}
\FOR{$\ell=1:k$} 
\STATE $\qquad$ \COMMENT{If $\ell=1$, then $\mr_{22}=\mr$ }
\STATE Partition 
$\mr=\begin{bmatrix} \mr_{11} & \mr_{12} \\ \vzero& \mr_{22}\end{bmatrix}$
where $\mr_{22}\in\real^{(p-\ell+1)\times (p-\ell+1)}$ \\
$\qquad$ \COMMENT{Focus on trailing $(p-\ell+1)\times (p-\ell+1)$ block}
\STATE Compute $k-\ell+1$ right singular vectors $\mv_1\in\real^{(p-\ell+1)\times (k-\ell+1)}$ of $\mr_{22}$ corresponding to $\sigma_1\geq \cdots\geq \sigma_{k-\ell+1}$
\STATE Set $\mw=\mv_1^T\in\real^{(k-\ell+1)\times (p-\ell+1)}$
\STATE Compute permutation $\widetilde{\mP}\in\real^{(p-\ell+1)\times(p-\ell+1)}$
so that
\STATE $\qquad\qquad\qquad\qquad\|\mw(\widetilde{\mP}\ve_1)\|_2=
\max_{1\leq j\leq p-\ell+1}{\|\mw\ve_j\|_2}$ \\
$\qquad$ \COMMENT{Move column of $\mw$ with largest norm to front}
\STATE Compute QR decomposition (\ref{e_qr}): 
$\mr_{22}\widetilde{\mP}=\widetilde{\mq}\widetilde{\mr}_{22}\qquad$
\COMMENT{Unpivoted QR of $\mr_{22}\widetilde{\mP}$}
\STATE Update 
$\mq:=\mq\begin{bmatrix}\mi_{\ell-1}&\vzero\\ \vzero& \widetilde{\mq} \end{bmatrix}$, 
$\mP:=\mP\begin{bmatrix}\mi_{\ell-1}&\vzero\\ \vzero&\widetilde{\mP}\end{bmatrix}$,
$\mr:=\begin{bmatrix}\mr_{11} & \mr_{12}\\ \vzero& \widetilde{\mr}_{22}
\end{bmatrix}$
\ENDFOR
\RETURN $\mP$, $\mq$, $\mr$
\end{algorithmic}
\end{algorithm}

\subsection{Strong rank-revealing QR decompositions}\label{s_srrqr}
The final method \cite[section 4]{GuE96} selects identifiable parameters by 
trying to maximize the volume of $\ms_1$ via
pairwise column permutations.

A  `strong rank-revealing' QR decomposition tries to optimize
both subset selection criteria (\ref{e_c1}) and (\ref{e_c2})
and bounds every element of $|\mr_{11}^{-1} \mr_{12}|$. 
The component-wise boundedness ensures that
the columns of 
\begin{align*}
    \boldsymbol{P} \begin{bmatrix} -\boldsymbol{R}_{11}^{-1} \boldsymbol{R}_{12} \\
    \boldsymbol{I}_{p-k} \end{bmatrix}
\end{align*}
represents an approximate basis for the null space of $\mchi$, provided $\mr_{11}$ is not too ill-conditioned \cite[section 1.2]{GuE96}.
A rigorous definition of the strong rank-revealing  QR decomposition is presented in \cite[Section 1.2]{GuE96} and Theorem~\ref{t_srrqr} below.

Algorithm~\ref{alg_srrqr}, which represents \cite[Algorithm 4]{GuE96},
exchanges a column of $\ms_1$
 with a column of $\ms_2$ until 
 $\det(\mchi_1^T\mchi_1)=\det(\mr_{11})^2$ stops increasing.
More specifically \cite[Lemma 3.1]{GuE96},
after permuting columns $i$ and $k+j$ 
of $\mr$ with a permutation matrix $\mP^{(ij)}$, and
performing an unpivoted QR decomposition $\mchi\mP^{(ij)}=\widetilde{\mq}\widetilde{\mr}$, we compare the determinant of the leading principal submatrix $\widetilde{\mr}_{11}\in\real^{k\times k}$ 
of $\widetilde{\mr}$
with that of the original submatrix $\mr_{11}$,
\begin{align}\label{e_rhoij}
\rho_{ij}\equiv \frac{\det(\widetilde{\mr}_{11})}{\det(\mr_{11})} = 
 \sqrt{(\mr_{11}^{-1} \mr_{12})^2_{ij} + \left(\|\mr_{22}\ve_j\|_2\,\|\ve_i^T\mr_{11}^{-1}\|_2\right)^2}.
\end{align}

Given a user-specified tolerance $f>1$, Algorithm~\ref{alg_srrqr} iterates as
long as it can find columns $i$ and $j+k$ with $\rho_{ij}>f$ and,
by permuting columns $i$ and $j+k$. 
increase the determinant  
to $\det(\widetilde{\mr}_{11})\geq f\det(\mr_{11})$.
The correctness of Algorithm~\ref{alg_srrqr} follows from Lemma~\ref{l_srrqr1}.

Theorem~\ref{t_srrqr} shows that the columns $\mchi_1$ from  Algorithm~\ref{alg_srrqr} can be 
interpreted as identifiable parameters that satisfy even stronger conditions than criteria~(\ref{e_c1}) and (\ref{e_c2}) combined.

\begin{theorem}\label{t_srrqr}
Let $\mchi\in\real^{n\times p}$ with $n\geq  p$ be the sensitivity matrix
and $1\leq k<p$.
Algorithm~\ref{alg_srrqr} with input
$f\geq 1$ computes a QR decomposition 
\begin{align*}
\mchi\mP=\begin{bmatrix}\mchi_1 & \mchi_2\end{bmatrix}=
\begin{bmatrix}\mq_1 & \mq_2\end{bmatrix}
\begin{bmatrix}\mr_{11}&\mr_{12}\\\vzero & \mr_{22}\end{bmatrix},
\end{align*}
where $\mr_{11}\in\real^{k\times k}$
and $\mr_{22}\in\real^{(p-k)\times(p-k)}$
satisfy
\begin{align*}
\sigma_i(\mr_{11}) & \geq \frac{\sigma_i}{\sqrt{1+f^2k(p-k)}},
\qquad 1 \leq i \leq k\\
\sigma_j(\mr_{22}) & \leq \sigma_{j+k} \sqrt{1+f^2k(p-k)}, \qquad 1 \leq j \leq p-k,
\end{align*}
and
\begin{align*}
|\mr_{11}^{-1}\mr_{12}|_{ij} \leq f,
\qquad 1\leq i\leq k, \ 1\leq j\leq p-k.
\end{align*}
If numerical $\rank(\mchi)= k$, then the columns of $\mchi_1$ represent the $k$ identifiable parameters,
and the columns of $\mchi_2$ the unidentifiable parameters.
\end{theorem}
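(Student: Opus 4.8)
The plan is to extract every conclusion from the single structural property that Algorithm~\ref{alg_srrqr} halts only when no column interchange is profitable, i.e.\ in a state in which $\rho_{ij}\le f$ for all $1\le i\le k$ and $1\le j\le p-k$, where $\rho_{ij}$ is the determinant ratio of~(\ref{e_rhoij}). That such a state is reached is the content of Lemma~\ref{l_srrqr1}: each interchange multiplies $\det(\mr_{11})$ by the factor $\rho_{ij}>f\ge 1$, which for $f>1$ strictly increases the determinant, so among the finitely many $k$-column subsets the iteration must terminate (the case $f=1$ is covered by Lemma~\ref{l_srrqr1}); assuming $\rank(\mchi)\ge k$ keeps $\mr_{11}$ nonsingular throughout. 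Two consequences of $\rho_{ij}\le f$ are recorded at once. First, (\ref{e_rhoij}) gives $\rho_{ij}^2=|\mr_{11}^{-1}\mr_{12}|_{ij}^2+\bigl(\|\mr_{22}\ve_j\|_2\,\|\ve_i^T\mr_{11}^{-1}\|_2\bigr)^2$, hence $|\mr_{11}^{-1}\mr_{12}|_{ij}\le\rho_{ij}\le f$, which is the component-wise bound. Second, summing $\rho_{ij}^2\le f^2$ over all $i$ and $j$ yields the master inequality
\[
\|\mr_{11}^{-1}\mr_{12}\|_F^2+\|\mr_{22}\|_F^2\,\|\mr_{11}^{-1}\|_F^2\ \le\ f^2\,k(p-k).
\]
(The ``trivial'' directions $\sigma_i(\mr_{11})\le\sigma_i$ and $\sigma_j(\mr_{22})\ge\sigma_{j+k}$ are not part of the claim; they follow from interlacing~(\ref{e_inter}).)

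For the lower bound on $\sigma_i(\mr_{11})$ I would split $\mr$ into its first $k$ rows $\begin{bmatrix}\mr_{11}&\mr_{12}\end{bmatrix}$ and its last $p-k$ rows $\begin{bmatrix}\vzero&\mr_{22}\end{bmatrix}$, so that the Gram matrix decomposes with no cross term,
\[
\mchi^T\mchi=\mr^T\mr=\begin{bmatrix}\mr_{11}&\mr_{12}\end{bmatrix}^T\begin{bmatrix}\mr_{11}&\mr_{12}\end{bmatrix}+\begin{bmatrix}\vzero&\mr_{22}\end{bmatrix}^T\begin{bmatrix}\vzero&\mr_{22}\end{bmatrix},
\]
both summands symmetric positive semidefinite. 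Weyl's inequality then gives $\sigma_i(\mchi)^2=\lambda_i(\mchi^T\mchi)\le\sigma_i\bigl(\begin{bmatrix}\mr_{11}&\mr_{12}\end{bmatrix}\bigr)^2+\sigma_1(\mr_{22})^2$. Since $\begin{bmatrix}\mr_{11}&\mr_{12}\end{bmatrix}=\mr_{11}\begin{bmatrix}\mi_k&\mr_{11}^{-1}\mr_{12}\end{bmatrix}$, the first contribution is at most $\sigma_i(\mr_{11})^2\bigl(1+\|\mr_{11}^{-1}\mr_{12}\|_F^2\bigr)$; and since $\|\mr_{11}^{-1}\|_F^2\ge\|\mr_{11}^{-1}\|_2^2=1/\sigma_k(\mr_{11})^2\ge 1/\sigma_i(\mr_{11})^2$ for $i\le k$, the second obeys $\sigma_1(\mr_{22})^2\le\|\mr_{22}\|_F^2\le\sigma_i(\mr_{11})^2\,\|\mr_{22}\|_F^2\,\|\mr_{11}^{-1}\|_F^2$. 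Adding the two and invoking the master inequality gives $\sigma_i(\mchi)^2\le\sigma_i(\mr_{11})^2\bigl(1+f^2k(p-k)\bigr)$, which is the claimed bound (and $\sigma_i(\mr_{11})=\sigma_i(\mchi_1)$ by~(\ref{e_c1b})).

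The companion bound $\sigma_j(\mr_{22})\le\sqrt{1+f^2k(p-k)}\,\sigma_{j+k}$ I would obtain by duality rather than repeating the argument. When $\mchi$ (hence $\mr$) has full column rank, the block-reversed inverse $\widehat{\mr}=\mathbf{J}\,\mr^{-T}\,\mathbf{J}$, with $\mathbf{J}$ the $p\times p$ reversal permutation, is again upper triangular; its singular values are the reciprocals of those of $\mr$, its leading $(p-k)\times(p-k)$ and trailing $k\times k$ blocks are the reversals of $\mr_{22}^{-T}$ and $\mr_{11}^{-T}$, and a direct computation shows that the $\rho$-quantities of $\widehat{\mr}$ equal, after reindexing, those of $\mr$ and so are all $\le f$. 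Applying the first bound to $\widehat{\mr}$, with the roles of $k$ and $p-k$ exchanged, and translating back through the reciprocals, delivers exactly $\sigma_j(\mr_{22})\le\sqrt{1+f^2k(p-k)}\,\sigma_{j+k}$. The final sentence of the theorem is then immediate: when $\mchi$ has numerical rank $k$, section~\ref{subsec:CSS} marks the well-conditioned columns $\mchi_1$ (large $\sigma_k(\mr_{11})$) as the identifiable parameters and the columns $\mchi_2$ (small $\|\mr_{22}\|_2$) as the unidentifiable ones.

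I expect the main obstacle to be pinning down the \emph{sharp} constant $\sqrt{1+f^2k(p-k)}$ rather than one larger by a bounded factor. The naive estimate---bound $\sigma_i\bigl(\begin{bmatrix}\mr_{11}&\mr_{12}\end{bmatrix}\bigr)\le\sigma_i(\mr_{11})\sqrt{1+\|\mr_{11}^{-1}\mr_{12}\|_2^2}$ and then add $\|\mr_{22}\|_2\le f\sqrt{k(p-k)}\,\sigma_k(\mr_{11})$ separately---loses roughly a factor of two, because it breaks apart a quantity that~(\ref{e_rhoij}) deliberately couples under one square root. The remedy is to preserve that coupling: the row-split makes the Weyl cross term vanish, and one must feed in the \emph{joint} Frobenius bound on the pair $(\mr_{11}^{-1}\mr_{12},\mr_{22})$ instead of the two separate spectral bounds. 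A secondary, purely clerical hurdle is verifying the index bookkeeping in the duality step that produces the $\mr_{22}$ bound.
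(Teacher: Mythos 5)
Your argument is essentially sound and it does reach the sharp constant $\sqrt{1+f^2k(p-k)}$, but it takes different routes from the paper's for both singular-value bounds, and one of those routes has a gap. The paper proves both bounds the same way: it writes $\mr$ as a block-diagonal matrix times a block upper-triangular multiplier (e.g.\ $\mr_D=\mr\mw$ with the scaling $\alpha=\sigma_1(\mr_{22})/\sigma_k(\mr_{11})$ chosen so that the diagonal blocks carry the dominant and subdominant singular values in the correct order), applies the Weyl \emph{product} inequality, and bounds $\|\mw\|_2^2\le 1+\|\mr_{11}^{-1}\mr_{12}\|_F^2+\|\mr_{22}\|_F^2\,\|\mr_{11}^{-1}\|_F^2\le 1+f^2k(p-k)$ --- exactly your ``master inequality,'' obtained identically by summing $\rho_{ij}^2\le f^2$. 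Your derivation of the lower bound on $\sigma_i(\mr_{11})$ via the row split of the Gram matrix $\mr^T\mr$ and Weyl's \emph{additive} eigenvalue inequality is a correct alternative; it buys you something, since it avoids the $\alpha$-scaling entirely and works verbatim when $\mr_{22}$ is singular. The component-wise bound and the reduction to the stopping criterion are the same as in the paper.

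The genuine gap is in the bound on $\sigma_j(\mr_{22})$. Your duality step requires $\mr$ (hence $\mr_{22}$, hence $\mchi$) to be invertible in order to form $\mathbf{J}\,\mr^{-T}\mathbf{J}$, and you explicitly restrict to the full-column-rank case. The theorem imposes no such hypothesis, and the paper emphasizes that its proof dispenses with it --- rank-deficient (or numerically rank-deficient) sensitivity matrices are precisely the case of interest. As written, your proof does not establish the second inequality for those matrices. The repair is routine but must be stated: either prove the second inequality directly from the master inequality via the paper's factorization $\mr_D=\mr\mw$ (which needs only $\mr_{11}$ nonsingular, and the case $\mr_{22}=\vzero$ is trivial), or perturb $\mr_{22}$ to a nonsingular matrix, note that the $\rho_{ij}$ and all singular values vary continuously, apply your duality argument, and pass to the limit. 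One cosmetic point: your parenthetical that termination for $f=1$ ``is covered by Lemma~\ref{l_srrqr1}'' is off --- that lemma gives the determinant-ratio identity, not termination --- but your strict-increase-over-finitely-many-column-subsets argument already works uniformly for all $f\ge 1$, so nothing is lost.
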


\begin{proof}
This follows from \cite[Lemma 3.1 and Theorem 3.2]{GuE96}.
See section~\ref{s_srrqrp}, and in particular in Lemma~\ref{l_srrqr}.
\end{proof}

\begin{algorithm}\caption{Column subset selection with strong rank-revealing QR (srrqr)}\label{alg_srrqr}
\begin{algorithmic}
\REQUIRE Sensitivity matrix $\mchi\in\real^{n\times p}$, $n\geq p$,
 $1\leq k<p$, $f \geq 1$
\STATE Compute $\mchi \mP=\mq\mr\qquad$ \COMMENT{Pivoted QR to make $\mr_{11}$  nonsingular}
\STATE Compute $\rho_{ij}$ as defined in (\ref{e_rhoij}), $1\leq i\leq k$, $1\leq j\leq k-p$
\WHILE{$\max_{1\leq i \leq k, \ 1 \leq j \leq p-k}{\{\rho_{ij}\}}> f$}
\STATE Find some $1\leq i\leq k$
and $1\leq j\leq k-p$
with $\rho_{ij}>f$ 
\STATE Compute permutation $\mP^{(ij)}$ to permute columns $i$ and $j+k$ 
\STATE Decomposition (\ref{e_qr}) $\ \mr \mP^{(ij)} = \widetilde{\mq} \widetilde{\mr}\quad$
\COMMENT{Unpivoted QR of $\mr\mP^{(ij)}$}
\STATE Update $\mP:=\mP\mP^{(ij)}$,
$\mq:=\mq\widetilde{\mq}$, 
$\mr:=\widetilde{\mr}$
\STATE Update $\rho_{ij}$ 
\ENDWHILE
\RETURN $\mP$, $\mq$, $\mr$
\end{algorithmic}
\end{algorithm}

\section{Applications}\label{s_appl}
We compare the accuracy of the four
Algorithms 
\ref{alg_PCAB1}--\ref{alg_srrqr}
on the sensitivity matrices from physical applications (section \ref{sec:physmods}) and on the synthetic matrices from classical column pivoting `counterexamples' (section \ref{sec:SynSensDesc}). 

Numerical experiments were performed in MATLAB 2021b on a 16 GB MacBook Pro 
with an M1 chip. We compute relative versions of the 
subset selection criteria (\ref{e_c1}) and (\ref{e_c2}), 
\begin{align}
\label{e_c1rel}
    \gamma_1 \equiv \frac{\sigma_k(\mchi_1)}{\sigma_k(\mchi)},
\end{align}
and
\begin{align}
\label{e_c2rel}
    \gamma_2 \equiv \frac{\|(\mi-\mchi_1 \mchi_1^{\dagger})\mchi_2 \|_2}{\sigma_{k+1}(\mchi)}.
\end{align}
The closer $\gamma_1$ and $\gamma_2$ are to 1, the more accurate
the algorithm. We also compute the improvement in condition number
of the selected columns, 
\begin{align}\label{e_c3c}
\tau\equiv\frac{\cond(\mchi_1)}{\cond(\mchi)}
\end{align}
The lower $\tau_1$, the better the conditioning of the selected columns.

\subsection{Sensitivity matrices from physical models}
\label{sec:physmods}
We apply Algorithms \ref{alg_PCAB1}--\ref{alg_srrqr}
to the sensitivity matrices from 
the mathematical models in sections \ref{sec:RealSensDesc} and~\ref{sec:supp}.

The sensitivity matrices 
$\mchi$ are evaluated at given nominal parameter values. 
For the epidemiological models (SVIR, SEVIR, COVID) in particular,
$\mchi$ is evaluated at the nominal values in Table \ref{tab:epiTab}, and additionally 
at 10,000 points sampled uniformly within 50\% of the nominal value.
  
\subsubsection*{Table \ref{tab:RealTab}}
For each model, Algorithms \ref{alg_PCAB1}--\ref{alg_srrqr} produce the same
identifiable parameters, that is, the
same column subsets and the same identical
values for the subset selection criteria
$\gamma_1$ in (\ref{e_c1rel}) and $\gamma_2$ in (\ref{e_c1rel}).
The consistent accuracy illustrates the robustness of column subset selection for identifiability analysis in  applications, particularly since each sensitivity matrix originates from a different type of mechanistic model.

\begin{table}[htbp]
    \label{tab:RealTab}
    \centering
    \begin{tabular}{c|c|c|c|c|*{2}{r}}
    \hline
      Model  & $n$ & $p$ & $k$ &  $\tau$ & $\gamma_1$  & $\gamma_2$ \\
    \hline
    SVIR & 31 & 4 & 3 & 1.6e-03 & 1.0 & 1.0 \\
    \hline
    
     SEVIR & 31 & 5 & 4 & 1.2e-02 & 1.0 & 1.0 \\
    \hline
     COVID & 31 & 8 & 5 & 1.5e-03 & 0.9 & 1.1 \\
    \hline
     HGO & 14 & 8 & 5 & 4.0e-04 &  1.0 & 1.0 \\
    \hline
     Wound & 46 & 11 & 6 & 2.2e-08 & 0.9 & 1.2 \\
    \hline
     Neuro & 200 & 175 & 14 & 9.8e-23 & 0.6 & 1.7 \\
    \hline
    \end{tabular}
    \caption{Identical accuracy of Algorithms \ref{alg_PCAB1}--\ref{alg_srrqr}
on the models in section \ref{sec:RealSensDesc}. Here
$p=$ number of parameters and number of columns of $\mchi$; $n=$ number of observations and number of  rows of $\mchi$; $k$=  numerical rank of $\mchi$ and number of identifiable  parameters; 
$\tau$= ratio of condition numbers in (\ref{e_c3c}); and $\gamma_1$ and $\gamma_2$ are the
subset selection criteria  in (\ref{e_c1rel}), and in (\ref{e_c2rel}), respectively.}
\end{table}
\smallskip

\begin{center}
\shadowbox{\shortstack{
When applied to the physical models,\\
Algorithms \ref{alg_PCAB1}--\ref{alg_srrqr} 
exhibit similar accuracy and reliability.\\
We recommend Algorithm~\ref{alg_srrqr} because, in theory,\\ 
it has the most stringent accuracy guarantees.}}
\end{center}
\smallskip

\subsection{Synthetic adversarial matrices}
\label{sec:SynSensDesc}
We apply Algorithms \ref{alg_PCAB1}--\ref{alg_srrqr}
to synthetic adversarial matrices designed to thwart the accuracy of subset
selection algorithms. Although synthetic, these matrices
still represent sensitivity matrices for specific
dynamical systems (Appendix~\ref{sec:supp3}).
Each algorithm is
applied to
10,000 realizations of each of the following matrices.

\begin{itemize}
\item \textit{Kahan} \cite{Kah66}: $\ms=\md_n\mk_n\in\real^{n\times n}$, where 
    \begin{align*}
    \md_n \equiv \diag\begin{pmatrix}1 & \zeta & \zeta^2 & \cdots & \zeta^{n-1}
    \end{pmatrix},\qquad
        \mk_n \equiv\begin{pmatrix} 
        1 & -\varphi & -\varphi & \cdots & -\varphi \\
         & 1 &  -\varphi & \cdots & -\varphi  \\
         &  & \ddots & \ddots & \vdots \\
         &  &  & 1 &  -\varphi  \\
         &  &  &  & 1
        \end{pmatrix},
    \end{align*}
with $\zeta^2 + \varphi^2 = 1$ for $\zeta, \varphi>0$, and $k=n-1$.

We choose $n=100$, and sample $\zeta$ uniformly from $[0.9, 0.99999]$.
The average condition number over 10,000 realizations is $\cond(\mchi)\approx 2.4\cdot 10^{19}$.
\smallskip

    \item \textit{Gu-Eisenstat} \cite[Example 2]{GuE96}:
    \begin{align*}
        \ms =\begin{pmatrix} 
        \md_{n-3} \boldsymbol{K}_{n-3} & \boldsymbol{0} & \boldsymbol{0} & -\varphi \md_{n-3} \myone_{n-3} \\
         & \mu & 0 & 0 \\
         & & \mu & 0 \\
         & & & \mu
        \end{pmatrix} \in\rnn,
    \end{align*}
    where $k = n-2$, and 
    \begin{align*}
        \mu \equiv \frac{1}{\sqrt{k}} \min_{1 \leq i \leq n-3} \|\ve_i^T(\md_{n-3} \boldsymbol{K}_{n-3})^{-1}\|_2^{-1}.
    \end{align*}
 We choose $n=100$, and sample $\zeta$ uniformly from $[0.9, 0.99999]$.
The average condition number over 10,000 realizations is $\cond(\mchi)\approx 2.0\cdot 10^{34}$.   
\smallskip
    
        \item \textit{Jolliffe} \cite[Appendix A1]{Joll72}:
    $\ms = \mU \boldsymbol{\Sigma} \mv^T$, where $\mU\in\real^{n\times p}$ has orthonormal columns with Haar measure \cite{Stew80}; $\boldsymbol{\Sigma}\in\real^{p\times p}$ is diagonal; and $\mv$ is the orthonormal factor
    from the QR factorization of 
    \begin{align*}
        \boldsymbol{\Lambda} = \begin{pmatrix} \boldsymbol{\Lambda}_1 &  & &  \\
         & \boldsymbol{\Lambda}_2 &  & \\
          & & \ddots & \\
           &  & & \boldsymbol{\Lambda}_k
          \end{pmatrix}, \qquad  
        \boldsymbol{\Lambda}_i = 
        \begin{pmatrix} 
        1 & \rho_i & \cdots & \rho_i \\
        \rho_i & 1 & \cdots & \rho_i \\
        \vdots & \vdots & \ddots & \vdots \\
        \rho_i & \rho_i & \cdots & 1
        \end{pmatrix}\in\real^{p_i\times p_i},
    \end{align*}
where $\rho_i \approx 1$ and $p = \sum_{i=1}^k{p_i}$.

We choose $n=200$, $p=100$, $p_i=5$, and $k=20$; and  sample the leading $k$ 
diagonal elements of
$\boldsymbol{\Sigma}$ uniformly from $[10^2, 10^3]$, the $p-k$ trailing diagonal
elements of $\boldsymbol{\Sigma}$  uniformly 
from $[10^{-10}, 10^{1.9}]$, and $\rho_i$ uniformly from $[0.9, 0.99999]$. 
The average condition number over 10,000 realizations is 
$\cond(\mchi)\approx 4.8\cdot 10^{14}$.   
\smallskip

 \item \textit{Sorensen-Embree} \cite{SE16}:  $\ms = \mU \boldsymbol{\Sigma} \mv^T$,
 where $\mU\in\real^{n\times p}$ has Haar measure with orthonormal columns; $\boldsymbol{\Sigma}\in\real^{p\times p}$ is diagonal;
 and
  $\mv=\begin{pmatrix} \mv_k&\mv_{p-k}\end{pmatrix}\in\real^{p\times p}$ is an
  orthogonal matrix, and  $\mv_k\in\real^{p\times k}$ is the orthonormal
  factor from the QR factorization of
    \begin{align*}
        \ml = \begin{pmatrix}
        1 &  &  & \\
        -1 & 1 &  & \\
        \vdots & \ddots & \ddots & \\
        -1 & \cdots & -1 & 1 \\
        -1 & \cdots & -1 & -1 \\
        \vdots &  &\vdots & \vdots \\
        -1& \cdots & -1&-1
        \end{pmatrix}\in\real^{p\times k}.
    \end{align*}
We choose $n=200$, $p=100$, and $k=20$; and  sample the leading $k$ 
diagonal elements of
$\boldsymbol{\Sigma}$ uniformly from $[10^2, 10^3]$, the $p-k$ trailing ones
uniformly from $[10^{-10}, 10^{1.9}]$.
The average condition number over 10,000 realizations is 
$\cond(\mchi)\approx 1.4\cdot 10^{14}$.   
\smallskip
    
 \item \textit{SHIPS}:  
We constructed this matrix to amplify differences in 
the accuracy of Algorithms \ref{alg_PCAB1}--\ref{alg_srrqr}. 
Here $\ms = \mU \boldsymbol{\Sigma} \mv^T$,
  where $\mU$ and $\msig$ as for Joliffe, and
  $\mv=\begin{pmatrix} \mv_k&\mv_{p-k}\end{pmatrix}\in\real^{p\times p}$ is an
  orthogonal matrix with 
 \begin{align*}
\mv_k = \begin{pmatrix} \mv_{11}\\ \Tilde{\mU}(\mi-\mv_{11}\mv_{11})^{1/2}\end{pmatrix}\in\real^{p\times k}
    \end{align*}
where $\Tilde{\mU} \in\real^{(p-k)\times k}$ has orthonormal columns with Haar measure \cite{Stew80}, and
    \begin{align*}
    \mv_{11}= \frac{\mt}{2\|\mt\|_2}\in\real^{k\times k}, \qquad
        \boldsymbol{T} = \begin{pmatrix}
        1 & -1  & \cdots & -1 \\
         & 1 & \cdots & -1 \\
         &  & \ddots & \vdots \\
         & & & 1          
        \end{pmatrix}\in\real^{k\times k}.
    \end{align*}
We choose $n=200$, $p=100$, and $k=20$. The leading $k$ 
diagonal elements of $\boldsymbol{\Sigma}$
are logarithmically spaced in $[10^2, 10^3]$, and the $p-k$ trailing ones logarithmically spaced in $[10^{-10}, 10^{1.9}]$. 
The average condition number over 10,000 realizations is 
$\cond(\mchi)\approx 1.0\cdot 10^{13}$.   
\end{itemize} 

In Algorithm~\ref{alg_srrqr}, we set $f=\sqrt{2}$
 for the \textit{Gu-Eisenstat} matrix, and $f=1$ for all other matrices.
 
\subsubsection*{Table~\ref{tab:SynthTab}}
It displays the average of the condition number ratio
(\ref{e_c3c}), and subset selection criteria (\ref{e_c1rel}) and (\ref{e_c2rel}) for 10,000 realizations of each synthetic matrix. 

Algorithm~\ref{alg_PCAB4} produces the smallest values
of $\tau$ and $\gamma_1$, that is, the worst 
conditioned columns $\mchi_1$, for the
\textit{Kahan} and \textit{Gu-Eisenstat} matrices.

Algorithm~\ref{alg_PCAB3} produces the smallest values
of $\gamma_2$, that is, the best low-rank approximation $\mchi_1$.
Algorithms \ref{alg_PCAB1} and \ref{alg_srrqr} are close 
with only slightly larger $\gamma_2$ on all matrices
except for the \textit{Sorensen-Embree} matrix,
where their $\gamma_2$ is more than 5 times larger than
that of Algorithm~\ref{alg_PCAB3}. 

Algorithms \ref{alg_PCAB1} and \ref{alg_srrqr} 
produce better conditioned $\mchi_1$
than Algorithm~\ref{alg_PCAB3}, most notably for
the \textit{Sorensen-Embree} and \textit{SHIPS} matrices. 

The \textit{Jolliffe} matrix was constructed to thwart 
Algorithm~\ref{alg_PCAB3} \cite[Appendix A1]{Joll72}, and there is slight evidence of its loss of accuracy with these matrices.
While all of the algorithms performed nearly identically, the absolute version of criterion~(\ref{e_c1}) for Algorithm~\ref{alg_PCAB3} (to more digits than could be represented in Table~\ref{tab:SynthTab}) is 1.8e-14, compared to 1.9e-14 for Algorithms~\ref{alg_PCAB1}, \ref{alg_PCAB4}, and \ref{alg_srrqr}.

\subsubsection*{Figure~\ref{fig:SHIPShist}} 
The box plots illustrate the accuracy
of Algorithms \ref{alg_PCAB1}--\ref{alg_srrqr} on 10,000 realizations of our
\textit{SHIPS} matrix. The top and bottom of each box represent the first and third quartiles, respectively, while the red line through the box itself is the average. 
Values below and above the short black horizontal lines are outliers,
and the horizontal lines themselves show the minimum and maximum excluding the outliers.

We constructed the SHIPS matrix to force differences in the accuracy
of Algorithms \ref{alg_PCAB1}--\ref{alg_srrqr}.
It illustrates the superior accuracy 
of Algorithm~\ref{alg_srrqr} in the conditioning 
(\ref{e_c3c}) of the selected columns $\mchi_1$,
as well as subset selection criteria (\ref{e_c1rel}) and (\ref{e_c2rel}). 

\subsubsection*{Figure~\ref{fig:SHIPShist}(a)} 
Algorithm~\ref{alg_srrqr} gives the best,
that is smallest, ratio of condition numbers. In contrast,
Algorithms \ref{alg_PCAB3} and \ref{alg_PCAB4} have
a larger number of outliers above the maximum,  illustrating more 
less reliable accuracy.

\subsubsection*{Figure~\ref{fig:SHIPShist}(b)} 
Algorithm~\ref{alg_srrqr}
gives the best, that is closest to 1,
values of $\gamma_1$.
In contrast, Algorithm \ref{alg_PCAB3}
has more outliers below its minimum,
indicating less reliable accuracy.

\subsubsection*{Figure~\ref{fig:SHIPShist}(c)}
Algorithm~\ref{alg_srrqr} has the most consistent values of $\gamma_2$, but 
they are slightly larger than
those for Algorithms \ref{alg_PCAB3} and \ref{alg_PCAB4}.
Its maximum and the outliers above are comparable to those of \ref{alg_PCAB3}.
In contrast, Algorithm ~\ref{alg_PCAB1} is much less accurate. 

While there are differences among Algorithms \ref{alg_PCAB1}--\ref{alg_srrqr} they are relatively small, 
suggesting that all are effective in practice. However, we still recommend Algorithm~\ref{alg_srrqr} since it is numerically stable, computationally efficient, and is the only one 
whose bounds do not depend exponentially on $p$ or $k$.

\begin{table}[htbp]
\label{tab:SynthTab}
\centering
\begin{tabular}{c|lr*{3}{r}}
$\mchi$  & Algorithms & $\tau$ & $\gamma_1$  & $\gamma_2$ \\
\hline
 & & & & & \\ 
 \texttt{Kahan} & \begin{tabular}{@{}l@{}} \ref{alg_PCAB1}, \ref{alg_srrqr} \\ \ref{alg_PCAB4} \\ \ref{alg_PCAB3} \end{tabular} & \begin{tabular}{@{}r@{}} \textbullet \ 3.7\texttt{e}-03 \\ 6.4\texttt{e}-01 \\ \textbullet \ 3.7\texttt{e}-03 \end{tabular} &
\begin{tabular}{@{}r@{}} \textbullet \ 1.0 \\ 1.6\texttt{e}-03 \\ \textbullet \ 1.0 \end{tabular}  & \begin{tabular}{@{}r@{}} 1.8\texttt{e}03 \\ 1.9\texttt{e}15 \\ \textbullet \ 1.7\texttt{e}03 \end{tabular}  \\
 & & & & & \\
 
  \texttt{GuEis} & \begin{tabular}{@{}l@{}} \ref{alg_PCAB1}, \ref{alg_srrqr} \\ \ref{alg_PCAB4} \\ \ref{alg_PCAB3} \end{tabular} & \begin{tabular}{@{}r@{}} \ \ 4.1\texttt{e}-03 \\ \ \ 4.1\texttt{e}-03 \\ \ \ 4.1\texttt{e}-03  \end{tabular} &
\begin{tabular}{@{}r@{}} 0.6 \\ 0.6 \\ 0.6 \end{tabular}  & \begin{tabular}{@{}r@{}} 0.9 \\ 5.2\texttt{e}11 \\ \textbullet \ 1.0 \end{tabular}  \\
 & & & & & \\

\texttt{Joll} & \ref{alg_PCAB1},\ref{alg_PCAB4}, \ref{alg_PCAB3}, \ref{alg_srrqr} & \ \ 1.6\texttt{e}-12 & 1.0 & 1.0  \\
 & & & & & \\
 
\texttt{SorEm} & \begin{tabular}{@{}l@{}}\ref{alg_PCAB1}, \ref{alg_srrqr} \\ \ref{alg_PCAB4} \\ \ref{alg_PCAB3} \end{tabular}  & \begin{tabular}{@{}r@{}} \textbullet \ 1.4\texttt{e}-12 \\ 2.2\texttt{e}-12 \\ 2.3\texttt{e}-12 \end{tabular}  & \begin{tabular}{@{}r@{}}\textbullet \ 0.9 \\ 0.5 \\ 0.5   \end{tabular}  & \begin{tabular}{@{}r@{}}  5.4 \\ 1.1 \\ \textbullet \ 1.0     \end{tabular} \\
 & & & & & \\
 
  \texttt{SHIPS} & \begin{tabular}{@{}l@{}} \ref{alg_PCAB1} \\ \ref{alg_PCAB4} \\ \ref{alg_PCAB3} \\ \ref{alg_srrqr} \end{tabular} & \begin{tabular}{@{}r@{}} 1.9\texttt{e}-12 \\
2.9\texttt{e}-12 \\
2.0\texttt{e}-12 \\
\textbullet \ 1.6\texttt{e}-12  \end{tabular}  & \begin{tabular}{@{}r@{}} 0.3 \\ 0.2 \\ 0.3 \\
 \textbullet \ 0.4 \end{tabular}  & \begin{tabular}{@{}r@{}} 2.4 \\ 1.4 \\ \textbullet 1.4 \\ 1.9 \end{tabular}  \\
 & & & & & \\
 \hline
\end{tabular}
\caption{Accuracy of Algorithms~\ref{alg_PCAB1}--\ref{alg_srrqr} on the synthetic
matrices. For each matrix $\mchi$, the average condition number
ratio $\tau$ in (\ref{e_c3c}), and the average subset selection criteria  $\gamma_1$ in (\ref{e_c1rel}) and $\gamma_2$
in (\ref{e_c2rel}) over 10,000 realizations are displayed.
A \textbullet \  denotes an optimal value for the corresponding criterion.}
\end{table}


\begin{figure}[htbp]
    \centering
    \includegraphics[width=\textwidth]{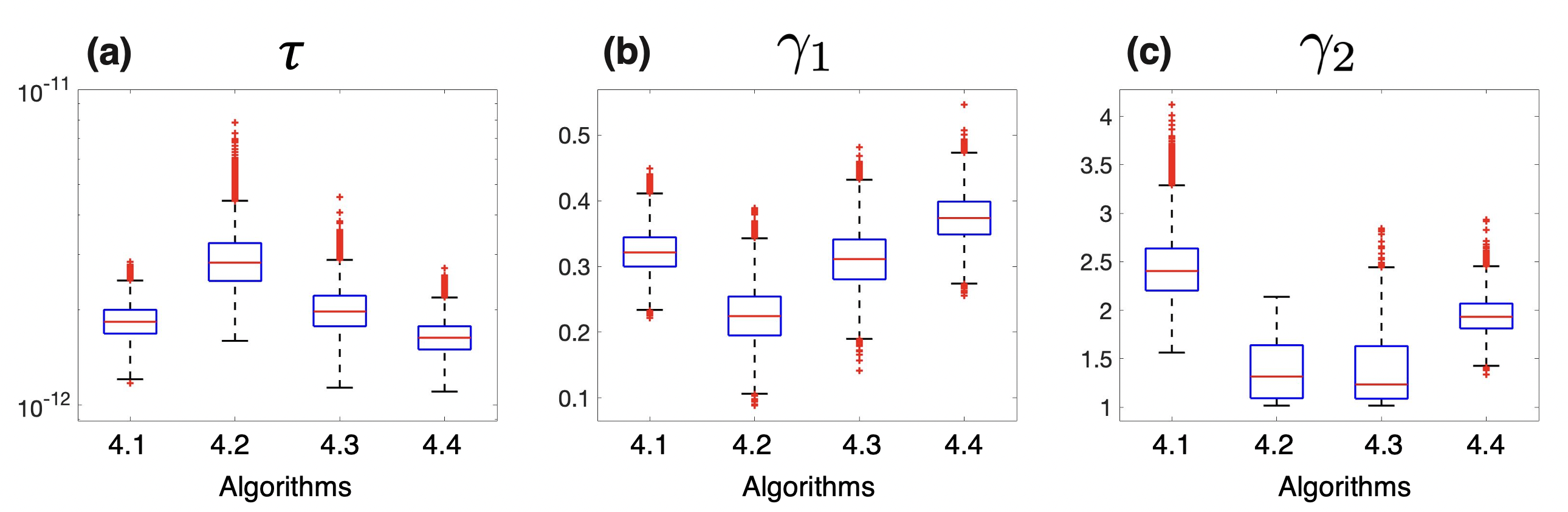}
    \caption{Application of Algorithms \ref{alg_PCAB1}--\ref{alg_srrqr} to
  10,000 realizations of the \textup{\texttt{SHIPS}} matrix.   
    Box plots show (a) the ratio of condition numbers $\tau$
    in (\ref{e_c3c}), and the subset selection criteria (b) $\gamma_1$ in (\ref{e_c1rel}), and (c) $\gamma_2$ in (\ref{e_c2rel}).}
    \label{fig:SHIPShist}
\end{figure}

\section{Conclusion}
We have presented a numerically accurate and reliable approach for practical parameter identifiability analysis in the context of physical models.

Our recommendation is to perform column subset selection (CSS) 
directly on the sensitivity matrix~$\mchi$, rather than detouring through the error-prone formation of the Fischer matrix $\mf=\mchi^T\mchi$ followed by an eigenvalue decomposition. 

We applied the four CSS Algorithms \ref{alg_PCAB1}--\ref{alg_srrqr},
to a large variety of
practical and adversarial sensitivity matrices, and 
they produced almost identical sets of identifiable
parameters $\mchi_1$ with vastly improved condition numbers compared to 
the condition number of the original matrix $\mchi$.

The superior accuracy of CSS is important 
when identifiability analysis is part of a larger  application.
In the context of inverse problems, 
for instance, parameters designated as unidentifiable
may be fixed at a nominal value, for the purpose
of dimension reduction.
If this is an iterative process, reliable  designation
of unidentifiable parameters is important.

\subsubsection*{Future research}
We discuss several avenues for future research, many of which will necessitate 
challenging modifications to Algorithms \ref{alg_PCAB1}--\ref{alg_srrqr}.

\begin{enumerate}
\item Efficient implementation of Algorithms \ref{alg_PCAB1}--\ref{alg_srrqr}.\\
This includes the  
choice of QR decompositions and data structure; as well as 
fast updates, searches for magnitude-largest elements, and computation of $k$.

\item Application of CSS methods to pharmacology.\\
Physiologically-based pharmacokinetic (PBPK) and quantitative systems pharmacology (QSP) models
exhibit moderate- to high-dimensional parameter spaces with highly nonlinear dependencies in their ODEs.  For example, the minimal brain PBPK model in \cite{Bloomingdale2021} has as many as 37 parameters in 16 coupled ODEs.  This requires that unidentifiable parameters be determined and fixed
at nominal values at the very start --prior to optimization, sensitivity analysis, Bayesian inference for computing parameter distributions, and uncertainty propagation for constructing prediction intervals for QoIs.

Another difficulty is the optimization of criteria (\ref{e_c1rel})--(\ref{e_c3c})
for larger QSP and PBPK models, as they  
may depend strongly on the number $n$ of observations, 
the number $p$ of parameters, and the number $k$ of identifiable parameters.

\item Global CSS algorithms.\\
Algorithms \ref{alg_PCAB1}--\ref{alg_srrqr} are local in the sense that they operate on a single set of nominal parameter values.  However, there is significant motivation in the PBPK and QSP communities to identify parameter dependencies for a \textit{range} of admissible parameter values.  
Although it
might be tempting to simply average the sensitivity values, in the manner of active subspace analysis \cite{Constantine2015}, 
the highly nonlinear nature of parameter dependencies tends to rule out this approach.

\item Mixed effects.\\
Another challenge in PBPK and QSP models are the regimes that combine
both, population and individual attributes. This necessitates mixed-effects models, which try to quantify the fixed-effects due to population parameters on the one hand; and the distributions for random effects associated with individuals on the other.  A first step would be to incorporate
CSS methods into the initial parameter subset selection algorithm for mixed-effects models in \cite{Schmidt_Smith2016}.  

\item Virtual populations.\\
A broad area of research in QSP models concerns the
generation of virtual populations for the purpose of safe and efficient drug development  \cite{ARM2016}.  This requires the perturbation of QSP models about nominal values and characterization of sensitivities and uncertainties associated with model parameters.  We anticipate that the CSS algorithms will play an increasing role in this growing field of virtual population generation and selection.
\end{enumerate}

\appendix
\section{Proofs}\label{s_addr}
We present the proofs of
Theorem~\ref{t_PCAB1} (section~\ref{s_PCAB1p}),
Theorem~\ref{t_PCAB4} (section~\ref{s_PCAB4p}),
Theorem~\ref{t_PCAB3} (section~\ref{s_PCAB3p}),
and Theorem~\ref{t_srrqr} (section~\ref{s_srrqrp}).

Let $\mchi\in\real^{n\times p}$ be the
sensitivity matrix with $n\geq p$,
singular values $\sigma_1\geq \cdots\geq \sigma_p\geq 0$,
and a pivoted QR decomposition,
partitioned for some $1\leq k<p$ so that 
\begin{align*}
\mchi\mP=\mq\begin{bmatrix}\mr_{11} & \mr_{12}\\ \vzero & \mr_{22}\end{bmatrix},
\qquad \mr_{11}\in\real^{k\times k}, \quad \mr_{22}\in\real^{(p-k)\times (p-k)}.
\end{align*}
Singular value interlacing \cite[Corollary 8.6.3]{GovL13}
implies that the singular values of $\mr_{11}$ cannot
exceed the corresponding dominant singular values of $\mchi$, while 
the singular values of $\mr_{22}$ cannot be smaller than the corresponding
subdominant singular values of $\mchi$, that is,
\begin{align}\label{e_inter}
\begin{split}
\sigma_j(\mr_{11}&)\leq \sigma_j, \qquad 1\leq j\leq k \\
\sigma_j(\mr_{22}) &\geq \sigma_{k+j}, \qquad 1\leq j\leq p-k.
\end{split}
\end{align}

\subsection{Proof of Theorem~\ref{t_PCAB1}}\label{s_PCAB1p}
We present an approximation for the smallest singular
value (Lemma~\ref{l_PCAB1}), a correctness proof Algorithm~\ref{alg_PCAB1} (Lemma~\ref{l_PCAB1b}),
and a proof of Theorem~\ref{t_PCAB1}
(Lemma~\ref{l_PCAB1tp}).

In the subsequent proofs we combine different bits and pieces from
\cite[sections 7 and 8]{ChI91a} and
\cite[section 3]{Chan1987}, and add more details for comprehension.

The key observation is that a judiciously chosen permutation can reveal a 
smallest singular value in a diagonal element of the triangular matrix 
in a QR decomposition.
Below is a consequence of a more general statement in
\cite[Theorem 2.1]{Chan1987}.

\begin{lemma}[Revealing a smallest singular value]\label{l_PCAB1}
Let $\vv$ with $\|\vv\|_2=1$ be a right singular vector of
$\mb\in\real^{m\times m}$ associated with a smallest 
singular value $\sigma_m(\mb)$, so that $\|\mb\vv\|_2=\sigma_m(\mb)$.
 Let $\mP\in\real^{m\times m}$ be a permutation that moves a magnitude-largest 
element of $\vv$ to the bottom, $|(\mP^T\vv)_m|=\|\vv\|_{\infty}$.
If $\mb\mP=\mq\mr$ is an unpivoted QR decomposition (\ref{e_qr}) of $\mb\mP$, 
then 
the trailing diagonal element of the upper triangular matrix~$\mr$ satisfies
\begin{align*}
\sigma_m(\mb)\leq |r_{mm}|\leq \sqrt{m}\,\sigma_m(\mb).
\end{align*}
\end{lemma}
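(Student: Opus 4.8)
\textbf{Proof proposal for Lemma~\ref{l_PCAB1}.}

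The lower bound $\sigma_m(\mb)\leq |r_{mm}|$ is immediate from interlacing~(\ref{e_inter}) applied to the QR factorization $\mb\mP=\mq\mr$ with the trivial partition $k=m-1$: the trailing $1\times 1$ block $r_{mm}=\mr_{22}$ satisfies $|r_{mm}|=\sigma_1(\mr_{22})\geq\sigma_m(\mb)$, since a column permutation leaves the singular values of $\mb$ unchanged. So the whole content is in the upper bound, and my plan is to exhibit a good test vector for the smallest singular value of $\mr$ that forces $|r_{mm}|$ to be small.

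The plan is to work with $\mb\mP=\mq\mr$, so that $\mr$ has the same singular values as $\mb$, and in particular $\sigma_m(\mr)=\sigma_m(\mb)$; moreover $\mr^{-T}\ve_m$ has norm $1/|r_{mm}|$ because $\mr$ is upper triangular (the last row of $\mr^{-1}$ is $\ve_m^T/r_{mm}$, equivalently the last column of $\mr^{-T}$). The key identity is that $\mP^T\vv$ is a right singular vector of $\mr$ for the singular value $\sigma_m(\mb)$: indeed $\|\mr(\mP^T\vv)\|_2=\|\mq^T\mb\mP\mP^T\vv\|_2=\|\mb\vv\|_2=\sigma_m(\mb)$ and $\|\mP^T\vv\|_2=1$. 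Now I estimate $|r_{mm}|^{-1}=\|\mr^{-T}\ve_m\|_2$ from below by testing against a cleverly chosen vector, or equivalently estimate $|r_{mm}|$ from above directly: since $\mP^T\vv$ is a unit right singular vector for $\sigma_m$, we have $\mr^{-1}(\mr\,\mP^T\vv)=\mP^T\vv$, hence $1=\|\mP^T\vv\|_2\leq \|\mr^{-1}\|_2\,\|\mr\,\mP^T\vv\|_2$ — but that only recovers the trivial bound. The productive route is to look at the last component: $\ve_m^T\mr^{-1}$ is supported on the last entry, so I want a relation that ties $r_{mm}$ to the last component of $\mP^T\vv$. Write $\mr\,(\mP^T\vv)=\sigma_m(\mb)\,\vw$ for some unit vector $\vw$; taking the last row gives $r_{mm}(\mP^T\vv)_m=\sigma_m(\mb)\,w_m$, so $|r_{mm}|\,|(\mP^T\vv)_m|=\sigma_m(\mb)\,|w_m|\leq\sigma_m(\mb)$. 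Since $\mP$ moves a magnitude-largest entry of $\vv$ to the bottom, $|(\mP^T\vv)_m|=\|\vv\|_\infty\geq \|\vv\|_2/\sqrt m=1/\sqrt m$. Combining, $|r_{mm}|\leq \sqrt m\,\sigma_m(\mb)$, which is exactly the claimed bound.

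The main obstacle — more a bookkeeping point than a genuine difficulty — is being careful that $\mP^T\vv$ (and not $\mP\vv$) is the right singular vector of $\mr$, and that the permutation indeed puts the $\|\cdot\|_\infty$-largest coordinate in the last slot so the pigeonhole estimate $\|\vv\|_\infty\geq 1/\sqrt m$ applies to the \emph{last} coordinate of $\mP^T\vv$. One should also double-check the degenerate case $\sigma_m(\mb)=0$ (then $r_{mm}=0$ as well, and the inequalities hold trivially) and note that $r_{mm}\neq 0$ whenever $\sigma_m(\mb)\neq 0$, so no division-by-zero issue arises. Everything else is a one-line consequence of unitary invariance of the $2$-norm and the triangular structure of $\mr$.
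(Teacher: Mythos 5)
Your argument is correct and is essentially the paper's own proof: the lower bound via interlacing, and the upper bound by observing that the last row of $\mr\,(\mP^T\vv)=\sigma_m(\mb)\,\vw$ reads $r_{mm}(\mP^T\vv)_m=\sigma_m(\mb)\,w_m$ with $|w_m|\le 1$, combined with the pigeonhole bound $|(\mP^T\vv)_m|=\|\vv\|_\infty\ge 1/\sqrt m$ (the paper phrases this as $\|\mr(\mP^T\vv)\|_2\ge|r_{mm}(\mP^T\vv)_m|$, which is the same estimate). The detour through $\mr^{-T}\ve_m$ in your write-up is unused and can be deleted.
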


\begin{proof}
The lower bound follows from singular value interlacing (\ref{e_inter}).
As for the upper  bound, the relation between the right singular 
vector $\vv$ and a corresponding left singular vector $\vu$ 
with $\mb\vv=\sigma_m(\mb)\vu$ and $\|\vu\|_2=1$ implies
\begin{align*}
\sigma_m(\mb)\vu=\mb\vv=(\mb\mP)\,(\mP^T\vv)=\mq\mr\,(\mP^T\vv)=
\mq \mr \begin{bmatrix}*\\ (\mP^T\vv)_m\end{bmatrix}
\end{align*}
From this, $\|\vu\|_2=1$, the unitary invariance 
of the two-norm, and the upper triangular nature
of $\mr$ follows \begin{align*}
\sigma_m(\mb)=\|\sigma_m(\mb)\vu\|_2=\|\mr(\mP^T\vv)\|_2\geq |r_{mm}(\mP^T\vv)_m|
= |r_{mm}|\,\|\vv\|_{\infty}\geq |r_{mm}|/\sqrt{m}.
\end{align*}
The last inequality follows from the fact that $\vv\in\real^m$ has
unit two-norm $\|\vv\|_2=1$, so at least one of its $m$ elements must be sufficiently
large with $\|\vv\|_{\infty}\geq 1/\sqrt{m}$.
\end{proof}

\begin{lemma}[Correctness of Algorithm~\ref{alg_PCAB1}]\label{l_PCAB1b}
Let $\mchi\in\real^{n\times p}$ with $n\geq p$ have singular values
$\sigma_1\geq \cdots \geq \sigma_p\geq 0$, and pick some $1\leq k<p$. Then
Algorithm~\ref{alg_PCAB1}  computes a QR decomposition 
$\mchi\mP=\mq\mr$
where the $p-k$ trailing diagonal elements of $\mr$ satisfy
\begin{align*}
|\mr_{\ell\ell}|\leq \sqrt{\ell}\,\sigma_{\ell}, \qquad k+1\leq \ell\leq p.
\end{align*}
\end{lemma}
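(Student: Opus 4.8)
The plan is to prove Lemma~\ref{l_PCAB1b} by induction on the loop iterations of Algorithm~\ref{alg_PCAB1}, using Lemma~\ref{l_PCAB1} as the workhorse at each step. The loop runs for $\ell = p, p-1, \ldots, k+1$, and at iteration $\ell$ the algorithm focuses on the leading $\ell\times\ell$ block $\mr_{11}\in\real^{\ell\times\ell}$ of the current triangular factor, computes a right singular vector $\vv$ of $\mr_{11}$ for its smallest singular value $\sigma_\ell(\mr_{11})$, applies a permutation $\widetilde\mP$ moving a magnitude-largest entry of $\vv$ to the bottom, and re-triangularizes via an unpivoted QR of $\mr_{11}\widetilde\mP$. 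The crucial point is that this is precisely the setup of Lemma~\ref{l_PCAB1} applied with $\mb = \mr_{11}$ and $m = \ell$, so the new trailing diagonal entry $\widetilde r_{\ell\ell}$ satisfies $|\widetilde r_{\ell\ell}| \leq \sqrt{\ell}\,\sigma_\ell(\mr_{11})$.

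First I would set up notation carefully: after the initial unpivoted QR $\mchi\mP_0 = \mq_0\mr_0$ we have $\sigma_j(\mr_0) = \sigma_j(\mchi) = \sigma_j$ for all $j$ (since $\mr_0$ and $\mchi$ have the same singular values, $\mchi$ being tall). Then I would track, at the start of iteration $\ell$, the leading $\ell\times\ell$ block $\mr_{11}^{(\ell)}$ of the current $\mr$. The key structural fact I need is that $\sigma_\ell(\mr_{11}^{(\ell)}) \leq \sigma_\ell$: this follows from singular value interlacing~(\ref{e_inter}), because $\mr_{11}^{(\ell)}$ is the leading principal $\ell\times\ell$ submatrix of an upper triangular matrix whose full singular values are still $\sigma_1,\ldots,\sigma_p$. (The updates in the algorithm --- left-multiplying $\mq$ by an orthogonal block-diagonal matrix and right-multiplying $\mP$ by a permutation --- preserve the singular values of $\mchi\mP$, hence of the whole $\mr$; and the trailing rows/columns already peeled off do not affect that the leading $\ell\times\ell$ block interlaces.) Combining Lemma~\ref{l_PCAB1} with this interlacing bound gives $|\mr_{\ell\ell}| \leq \sqrt{\ell}\,\sigma_\ell(\mr_{11}^{(\ell)}) \leq \sqrt{\ell}\,\sigma_\ell$, which is exactly the claimed bound for index $\ell$.

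Then I would observe that once the algorithm sets the $(\ell,\ell)$ diagonal entry in iteration $\ell$, subsequent iterations (for indices $\ell-1, \ell-2, \ldots, k+1$) only modify the leading $(\ell-1)\times(\ell-1)$ block and the off-diagonal block $\mr_{12}$; the diagonal entry $\mr_{\ell\ell}$ and everything below-and-right of it is untouched. Hence the bound $|\mr_{\ell\ell}| \leq \sqrt{\ell}\,\sigma_\ell$ established at iteration $\ell$ persists to the final output, and this holds for every $\ell$ from $k+1$ to $p$. That completes the induction and the proof.

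The main obstacle I anticipate is bookkeeping rather than deep mathematics: I must verify precisely that (i) the leading $\ell\times\ell$ block at the start of iteration $\ell$ really does have all the right singular-value interlacing properties relative to the original $\mchi$ --- in particular that none of the earlier permutations and re-triangularizations have disturbed the interlacing --- and (ii) that later iterations genuinely do not overwrite $\mr_{\ell\ell}$. Both are true by the block structure of the updates in Algorithm~\ref{alg_PCAB1}, but spelling out the index-matching (the $\mi_{p-\ell}$ identity blocks, the placement of $\widetilde{\mr}_{11}$, etc.) is where care is needed. A secondary subtlety is handling the $\ell = p$ base case, where $\mr_{11} = \mr$ and there is no trailing block yet; this is just the clean application of Lemma~\ref{l_PCAB1} to the full $p\times p$ triangular matrix.
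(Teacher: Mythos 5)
Your proposal is correct and follows essentially the same route as the paper's own proof: an induction over the loop iterations, applying Lemma~\ref{l_PCAB1} to the leading $\ell\times\ell$ block at step $\ell$, invoking interlacing~(\ref{e_inter}) together with the invariance of the singular values of the full triangular factor to get $\sigma_\ell(\mr_{11}^{(\ell)})\leq\sigma_\ell$, and noting that later iterations leave the already-placed diagonal entries untouched. The bookkeeping concerns you flag are exactly the ones the paper's induction resolves, and your outline resolves them the same way.
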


\begin{proof}
This is an induction proof on the iterations~$i$ of Algorithm~\ref{alg_PCAB1}
with more discerning notation.
The initial pivoted decomposition reduces the problem size
\begin{align}\label{e_PCAa}
\mchi\mP^{(0)}=\mq^{(0)}\mr^{(0)},
\end{align}
where $\mP^{(0)}\in\real^{p\times p}$ is a permutation, $\mq^{(0)}\in\real^{n \times p}$
has orthonormal columns, and $\mr^{(0)}\in\real^{p\times p}$ is upper triangular.

\paragraph{Induction basis}
Set $\mr_{11}^{(1)}=\mr^{(0)}\in\real^{p\times p}$, and let $\vv^{(1)}, \vu^{(1)}\in\real^p$
be right and left singular vectors associated with a smallest singular value, 
\begin{align*}
\mr_{11}^{(1)}\vv^{(1)}=\sigma_p\vu^{(1)}, \qquad \|\vv^{(1)}\|_2=\|\vu^{(1)}\|_2=1.
\end{align*}
Determine a permutation $\widetilde{\mP}^{(1)}$ that moves a magnitude-largest element 
of $\vv^{(1)}$ to the bottom, 
\begin{align*}
|((\widetilde{\mP}^{(1)})^T\vv^{(1)})_p|=\|\vv^{(1)}\|_{\infty}\geq 1/\sqrt{p}.
\end{align*}
Compute an unpivoted QR decomposition 
$\mr_{11}^{(1)}\widetilde{\mP}^{(1)}=\widetilde{\mq}^{(1)}\widetilde{\mr}_{11}^{(1)}$,
where $\widetilde{\mq}^{(1)}\in\real^{p\times p}$ is an orthogonal matrix.
Lemma~\ref{l_PCAB1} implies that  the trailing diagonal element of the
triangular matrix reveals a smallest singular value,
$|(\widetilde{\mr}_{11}^{(1)})_{pp}|\leq \sqrt{p}\,\sigma_p$.
Insert this into the initial decomposition~(\ref{e_PCAa})
\begin{align*}
\mchi\mP^{(0)}=\mq^{(0)}\mr^{(0)}=\mq^{(0)}
\widetilde{\mq}^{(1)}\widetilde{\mr}_{11}^{(1)}(\widetilde{\mP}^{(1)})^T.
\end{align*}
Multiply by $\widetilde{\mP}^{(1)}$ on the right,
\begin{align*}
\mchi\,\underbrace{\mP^{(0)}\widetilde{\mP}^{(1)}}_{\mP^{(1)}}=
\underbrace{\mq^{(0)}\widetilde{\mq}^{(1)}}_{\mq^{(1)}}\,
\underbrace{\widetilde{\mr}_{11}^{(1)}}_{\mr^{(1)}} \qquad
\text{where} \qquad |\mr^{(1)}_{pp}|\leq \sqrt{p}\sigma_p.
\end{align*}

\paragraph{Induction hypothesis}
Assume that $\mchi\mP^{(i)}=\mq^{(i)}\mr^{(i)}$ for $i=p-\ell$ and $\ell>k+1$ with
\begin{align*} 
|\mr^{(i)}_{jj}|\leq \sqrt{j}\,\sigma_{j}, \qquad \ell\leq j \leq p.
\end{align*}

\paragraph{Induction step}
Here $\ell=k+2$ is the dimension of the leading block, while $i\equiv p-\ell$
is the dimension of the trailing block. Partition
\begin{align}\label{e_PCAd}
\mr^{(i)} =\begin{bmatrix}\mr_{11}^{(i)} &\mr^{(i)}_{12}\\ \vzero & \mr^{(i)}_{22}
\end{bmatrix}\qquad \mr^{(i)}_{11}\in\real^{\ell\times \ell}, \quad
\mr^{(i)}_{22}\in\real^{i\times i}.
\end{align}
Let $\vv^{(i+1)}, \vu^{(i+1)}\in\real^{\ell}$
be  right and left singular vectors associated with a smallest singular value
of $\mr_{11}^{(i)}$,
\begin{align}\label{e_PCAc}
\mr_{11}^{(i)}\vv^{(i+1)}=\sigma_{\ell}(\mr^{(i)}_{11})\,\vu^{(i+1)}, \qquad 
\|\vv^{(i+1)}\|_2=\|\vu^{(i+1)}\|_2=1.
\end{align}
Determine a permutation $\widetilde{\mP}^{(i+1)}$ that moves a magnitude-largest element 
of $\vv^{(i+1)}$ to the bottom, 
\begin{align*}
|((\widetilde{\mP}^{(i+1)})^T\vv^{(i+1)})_{\ell}|=\|\vv^{(i+1)}\|_{\infty}\geq 1/\sqrt{\ell}.
\end{align*}
Compute an unpivoted QR decomposition 
$\mr_{11}^{(i)}\widetilde{\mP}^{(i+1)}=\widetilde{\mq}^{(i+1)}\widetilde{\mr}_{11}^{(i+1)}$,
where $\widetilde{\mq}^{(i+1)}\in\real^{\ell\times \ell}$ is an orthogonal matrix.
Lemma~\ref{l_PCAB1} implies that  the trailing diagonal element of the
triangular matrix reveals a smallest singular value,
\begin{align}\label{e_PCAb}
|(\widetilde{\mr}_{11}^{(i+1)})_{\ell \ell}|\leq 
\sqrt{\ell}\,\sigma_{\ell}(\mr^{(i)}_{11}).
\end{align}
Insert this into the decomposition $\mchi\mP^{(i)}=\mq^{(i)}\mr^{(i)}$
with partitioning (\ref{e_PCAd}), and exploit the fact that the 
inverse of the orthogonal matrix 
$\widetilde{\mq}^{(i+1)}$ is $(\widetilde{\mq}^{(i+1)})^T$, 
\begin{align*}
\mchi\mP^{(i)}=\mq^{(i)}\mr^{(i)}
&=\mq^{(i)}\begin{bmatrix}
\widetilde{\mq}^{(i+1)}\widetilde{\mr}_{11}^{(i+1)}(\widetilde{\mP}^{(i+1)})^T &
\mr_{12}^{(i)}\\ \vzero & \mr_{22}^{(i)}\end{bmatrix}\\
&=\mq^{(i)}
\begin{bmatrix}\widetilde{\mq}^{(i+1)} & \vzero\\ \vzero& \mi_i\end{bmatrix}
\begin{bmatrix}\widetilde{\mr}_{11}^{(i+1)} &(\widetilde{\mq}^{(i+1)})^T \mr_{12}^{(i)}\\ 
\vzero & \mr_{22}^{(i)}\end{bmatrix}
\begin{bmatrix} (\widetilde{\mP}^{(i+1)})^T&\vzero\\ \vzero& \mi_i\end{bmatrix}
\end{align*}
Multiply by the permutation on the right,
\begin{align*}
\mchi\underbrace{\mP^{(i)}\begin{bmatrix} \widetilde{\mP}^{(i+1)}&\vzero\\ 
\vzero& \mi_i\end{bmatrix}}_{\mP^{(i+1)}}
&=\underbrace{\mq^{(i)}\begin{bmatrix}\widetilde{\mq}^{(i+1)} & \vzero\\ \vzero& 
\mi_i\end{bmatrix}}_{\mq^{(i+1)}}
\underbrace{\begin{bmatrix}\widetilde{\mr}_{11}^{(i+1)} &
(\widetilde{\mq}^{(i+1)})^T \mr_{12}^{(i)}\\ 
\vzero & \mr_{22}^{(i)}\end{bmatrix}}_{\mr^{(i+1)}}.
\end{align*}
From (\ref{e_PCAb}), interlacing (\ref{e_inter}), and the fact that $\mr^{(i)}$ has the 
same singular values as~$\mchi$ follows
\begin{align*}
 |(\mr^{(i+1)})_{\ell \ell}|=|(\widetilde{\mr}_{11}^{(i+1)})_{\ell \ell}| \leq 
\sqrt{\ell}\,\sigma_{\ell}(\mr^{(i)}_{11})\leq \sqrt{\ell}\,\sigma_{\ell}(\mr^{(i)})
= \sqrt{\ell}\,\sigma_{\ell}.
\end{align*}
Together with the induction hypothesis, and $i=p-\ell=p-(k+2)$ this implies
\begin{align*}
 |\mr^{(p-k+1)}_{jj}| \leq \sqrt{j}\,\sigma_{j}, \qquad k+1\leq j\leq p.
\end{align*}
\end{proof}

\begin{lemma}[Proof of Theorem~\ref{t_PCAB1}]\label{l_PCAB1tp}
Let $\mchi\in\real^{n\times p}$ with $n\geq p$ have singular
values $\sigma_1\geq \cdots \geq \sigma_p\geq 0$, and pick some $1\leq k<p$.
Then Algorithm~\ref{alg_PCAB1} computes a QR decomposition 
\begin{align*}
\mchi\mP=\begin{bmatrix}\mq_1 & \mq_2\end{bmatrix}
\begin{bmatrix}\mr_{11}&\mr_{12}\\\vzero & \mr_{22}\end{bmatrix},
\end{align*}
where the largest singular value of $\mr_{22}\in\real^{(p-k)\times (p-k)}$ is bounded by
\begin{align*}
\|\mr_{22}\|_2&\leq p\,\|\mw^{-1}\|_2\, \sigma_{k+1}.
\end{align*}
Here $\mw\in\real^{(p-k)\times (p-k)}$ is a triangular matrix with diagonal
elements $|w_{jj}|=1$, $1\leq j\leq p-k$; offdiagonal elements $|w_{ij}|\leq 1$
for $i\neq j$;  and
\begin{align*}
\|\mw^{-1}\|_2\leq 2^{p-k-1}.
\end{align*}
\end{lemma}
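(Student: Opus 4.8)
The plan is to bound $\|\mr_{22}\|_2$ by interposing a triangular matrix $\mw$ whose columns are the ``small-singular-value directions'' generated along the way by Algorithm~\ref{alg_PCAB1}, and then splitting
\begin{align*}
\|\mr_{22}\|_2=\|(\mr_{22}\mw)\,\mw^{-1}\|_2\leq \|\mr_{22}\mw\|_2\,\|\mw^{-1}\|_2.
\end{align*}
The first factor should be of order $\sigma_{k+1}$, up to a polynomial in $p$, because applying $\mr_{22}$ to a column of $\mw$ essentially reproduces the smallest singular value of one of the leading blocks processed by the algorithm; the second factor is the amplification one pays to return from those directions to the canonical basis, and it is governed by the classical worst-case estimate for unit triangular matrices with entries bounded by one. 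In effect this upgrades the induction of Lemma~\ref{l_PCAB1b}, which controls only the diagonal of $\mr_{22}$, into one that controls all of $\mr_{22}$.

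To construct $\mw$, I would return to the proof of Lemma~\ref{l_PCAB1b}. At the iteration with leading block $\mr_{11}^{(\cdot)}$ of size $\ell=k+j$, the algorithm selects a unit right singular vector $\vv$ with $\mr_{11}^{(\cdot)}\vv=\sigma_\ell(\mr_{11}^{(\cdot)})\,\vu$, and permutes its magnitude-largest entry to position $\ell$; the permuted vector then has $\ell$th entry of magnitude $\|\vv\|_\infty\geq 1/\sqrt{\ell}\geq 1/\sqrt{p}$ and all other entries no larger. Dividing by this $\ell$th entry yields a vector $\widehat{\vv}$ with $(\widehat{\vv})_\ell=1$ and all entries at most one in magnitude; taking the coordinates $k+1,\dots,k+j$ of $\widehat{\vv}$ (zero-padded) as the $j$th column of $\mw\in\real^{(p-k)\times(p-k)}$, for $j=1,\dots,p-k$, produces a triangular matrix with $|w_{jj}|=1$ and $|w_{ij}|\leq 1$. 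For such a $\mw$, the usual induction on $i-j$ applied to $(\mw^{-1})_{ij}=-\sum_{\ell}w_{i\ell}(\mw^{-1})_{\ell j}$ gives $|(\mw^{-1})_{ij}|\leq 2^{|i-j|-1}$ off the diagonal, hence $\|\mw^{-1}\|_1\leq 2^{p-k-1}$ and $\|\mw^{-1}\|_\infty\leq 2^{p-k-1}$, and therefore $\|\mw^{-1}\|_2\leq\sqrt{\|\mw^{-1}\|_1\,\|\mw^{-1}\|_\infty}\leq 2^{p-k-1}$, which is the last assertion of the lemma.

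The main obstacle is the estimate $\|\mr_{22}\mw\|_2\leq p\,\sigma_{k+1}$, which forces one to re-run the induction of Lemma~\ref{l_PCAB1b} while tracking full vectors rather than only the diagonal entries. The key fact is that, once iteration $\ell=k+j$ re-triangularizes the leading block into $\widetilde{\mr}_{11}^{(\cdot)}$, the rescaled permuted singular vector satisfies
\begin{align*}
\|\widetilde{\mr}_{11}^{(\cdot)}\widehat{\vv}\|_2\leq\sqrt{p}\,\sigma_\ell(\mr_{11}^{(\cdot)})\leq\sqrt{p}\,\sigma_\ell\leq\sqrt{p}\,\sigma_{k+1},
\end{align*}
by interlacing~(\ref{e_inter}) and $\ell=k+j\geq k+1$, and that the rows $k+1,\dots,k+j$ of the final $\mr$ that make up the $j$th block row of $\mr_{22}$ arise from the corresponding rows of $\widetilde{\mr}_{11}^{(\cdot)}$ only through subsequent left multiplications by orthogonal matrices and column permutations — the column permutations being applied simultaneously to the $j$th column of $\mw$, so that $\mr_{22}\mw\ve_j$ still equals the tail of that small vector up to an orthogonal transformation. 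Propagating this bookkeeping through all iterations $\ell=p,p-1,\dots,k+1$ then yields $\|\mr_{22}\mw\ve_j\|_2\leq\sqrt{p}\,\sigma_{k+1}$ for every $j$, whence
\begin{align*}
\|\mr_{22}\mw\|_2\leq\|\mr_{22}\mw\|_F\leq\sqrt{p-k}\,\sqrt{p}\,\sigma_{k+1}\leq p\,\sigma_{k+1}.
\end{align*}
Combining this with the bound on $\|\mw^{-1}\|_2$ and the splitting gives $\|\mr_{22}\|_2\leq p\,\|\mw^{-1}\|_2\,\sigma_{k+1}$. The delicate part throughout is the accounting of which rows and columns of the intermediate matrices $\mr^{(i)}$ persist into $\mr_{22}$, and in which permuted order, so that the transformations applied to $\mr$ and to the columns of $\mw$ stay in step; this mirrors the treatment in \cite[Section 3]{Chan1987} and \cite[Sections 7--8]{ChI91a}.
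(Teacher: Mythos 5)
Your proposal is correct and follows essentially the same route as the paper's proof: the columns of your $\mw$ are exactly the paper's permuted smallest right singular vectors of the nested leading blocks with the diagonal scaling $\md$ absorbed, the column-wise estimate $\|\mr_{22}\mw\ve_j\|_2\leq\sqrt{p}\,\sigma_{k+1}$ via interlacing and $\|\vv\|_\infty\geq 1/\sqrt{\ell}$ is the paper's two-sided bound on $\|\mr\mz\|_2$ rephrased, and the final combination is identical. The only cosmetic difference is that you derive $\|\mw^{-1}\|_2\leq 2^{p-k-1}$ by an explicit induction on the entries of $\mw^{-1}$, whereas the paper cites the standard result for unit triangular matrices with entries bounded by one.
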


\begin{proof}
Let $\mchi\mP=\mq\mr$ be computed by Algorithm~\ref{alg_PCAB1} with 
input $k$. The proof is an extension of Lemma~\ref{l_PCAB1}.
From the right singular vectors in Algorithm~\ref{alg_PCAB1}
we construct a matrix $\mz$, and then bound $\|\mr\mz\|_2$ 
to derive an upper bound for $\|\mr_{22}\|_2$.

\paragraph{Construction of $\mz$}
The indexing of the partition is different than the one in~(\ref{e_PCAd}), 
\begin{align*}
\mr^{(\ell)} =\begin{bmatrix}\mr_{11}^{(\ell)} &\mr^{(\ell)}_{12}\\ \vzero & \mr^{(\ell)}_{22}
\end{bmatrix}\qquad \mr^{(\ell)}_{11}\in\real^{\ell\times \ell}, \quad
\mr^{(\ell)}_{22}\in\real^{(p-\ell)\times (p-\ell)}, \qquad k+1\leq \ell\leq p.
\end{align*}
In the statement of this lemma, the partitioning is $\ell=k$.

Let $\vv^{(\ell)}, \vu^{(\ell)}\in\real^{\ell}$ be right and left singular vectors associated
with a smallest singular value of $\mr_{11}^{(\ell)}$,
\begin{align*}
\mr_{11}^{(\ell)}\vv^{(\ell)}=\sigma_{\ell}(\mr_{11}^{(\ell)})\,\vu^{(\ell)},\qquad 
\|\vv^{(\ell)}\|_2=\|\vu^{(\ell)}\|_2=1, \qquad k+1\leq \ell\leq p.
\end{align*}
Algorithm~\ref{alg_PCAB1} has permuted the right singular vectors so that
a magnitude-largest element is at  the bottom,
\begin{align}\label{e_PCAf}
|\vv^{(\ell)}_{\ell}|\geq 1/\sqrt{\ell}
\quad \text{and}\quad  |\vv^{(\ell)}_j|\leq |\vv^{(\ell)}_{\ell}|, \qquad
1\leq j<\ell, \quad k+1\leq \ell\leq p.
\end{align}
The trailing elements in singular vectors associated with larger-dimensional
blocks are not affected by subsequent permutations,
see  (\ref{e_PCAd}), where permutations in the $(1,1)$ block do not
affect the $(2,2)$ block and its placement of diagonal elements.

Construct an upper trapezoidal  matrix 
$\mz=\begin{bmatrix}\vz_1 & \cdots &\vz_{p-k}\end{bmatrix}\in\real^{p\times (p-k)}$,
whose columns are the right singular vectors 
\begin{align*}
\vz_{\ell-k}=\begin{bmatrix} \vv^{(\ell)} \\ \vzero_{p-\ell}\end{bmatrix}, \qquad
k+1\leq \ell\leq p.
\end{align*}
Factor out the diagonal elements and focus on the trailing $(p-k)\times (p-k)$ submatrix
\begin{align}\label{e_PCAh}
\mz=\begin{bmatrix}\mz_1 \\ \mw\end{bmatrix}\md, \qquad \text{where}\qquad
\md=\begin{bmatrix}v_{k+1}^{(k+1)} && \\ &\ddots & \\ && v_p^{(p)}\end{bmatrix}
\in\real^{(p-k)\times (p-k)}
\end{align}
has diagonal elements 
$|d_{\ell\ell}|=|v_{\ell}^{(\ell)}|\geq 1/\sqrt{\ell}$, $k+1\leq \ell\leq p$.
From (\ref{e_PCAf}) follows that  $\mw\in\real^{(p-k)\times (p-k)}$
is a nonsingular upper triangular matrix with elements
\begin{align*}
|w_{\ell\ell}|=1, \qquad |w_{\ell j}|\leq 1, \qquad 1\leq \ell\leq p-k, \quad j>\ell.
\end{align*}

\paragraph{Bounds for $\|\mr\mz\|_2$}
We derive an upper and a lower bound.
Multiplying the QR decomposition $\mchi\mP=\mq\mr$ by $\mq^T$ on the left and 
by $\mz$ on the right gives 
\begin{align*}
\mq^T\mchi\mP\mz=\mr\mz\in\real^{p-k}.
\end{align*}
The columns of $\mr\mz$ are
\begin{align*}
\mr\vz_{\ell-k}=\begin{bmatrix} \mr_{11}^{(\ell)}\vv^{(\ell)}\\ \vzero_{p-\ell}\end{bmatrix}=
\sigma_{\ell}(\mr_{11}^{(\ell)})\begin{bmatrix}\vu^{(\ell)}\\\vzero_{p-\ell}\end{bmatrix},
\qquad k+1\leq \ell\leq p.
\end{align*}
From $\|\vu^{(\ell)}\|_2=1$ and interlacing (\ref{e_inter}) follows
\begin{align*}
\|\mr\vz_{\ell-k}\|_2=\sigma_{\ell}(\mr_{11}^{(\ell)})\leq\sigma_{\ell},\qquad 
k+1\leq \ell\leq p.
\end{align*}
Bound the norm of $\mr\mz\in\real^{p\times (p-k)}$ in terms of its largest 
column norm \cite[section 2.3.2]{GovL13} to obtain the upper bound
\begin{align}\label{e_PCAe}
\|\mr\mz\|_2\leq \sqrt{p-k}\,\max_{k+1\leq \ell\leq p}{\|\mr\vz_{\ell-k}\|_2} \leq
\sqrt{p-k}\,\max_{k+1\leq \ell\leq p}{\sigma_{\ell}}\leq \sqrt{p}\,\sigma_{k+1}.
\end{align}
As for the lower bound, use the partitioning in the statement of this lemma,
\begin{align*}
\mr\mz=\begin{bmatrix}\mr_{11} &\mr_{12}\\ \vzero & \mr_{22}
\end{bmatrix}\begin{bmatrix}\mz_1\md\\ \mw\md\end{bmatrix}=
\begin{bmatrix}\mr_{11}\mz_1\md+\mr_{12}\mw\md\\
\mr_{22}\mw\md\end{bmatrix},
\end{align*}
and bound $\|\mr\mz\|_2$ in terms of the trailing component 
\begin{align*}
\|\mr\mz\|_2\geq \|\mr_{22}\mw\md\|_2\geq 
\frac{\|\mr_{22}\|_2}{\|\mw^{-1}\|_2\|\md^{-1}\|_2}
\geq\frac{\|\mr_{22}\|_2}{\sqrt{p}\,\|\mw^{-1}\|_2}.
\end{align*}
At last combine the above upper bound with the lower bound (\ref{e_PCAe}), 
\begin{align*}
\|\mr_{22}\|\leq p\,\|\mw^{-1}\|_2\,\sigma_{k+1}.
\end{align*}
The bound for $\|\mw^{-1}\|_2$ is derived in \cite[Theorem 8.14]{Higham2002}; and there 
are classes of matrices for which it can essentially be tight \cite[section 8.3]{Higham2002}.
\end{proof}

\subsection{Proof of Theorem~\ref{t_PCAB4}}\label{s_PCAB4p}
We present an approximation for the largest singular
value (Lemma~\ref{l_PCAB4}), a correctness proof Algorithm~\ref{alg_PCAB4} (Lemma~\ref{l_PCAB4b}),
and a proof of Theorem~\ref{t_PCAB4}
(Lemma~\ref{l_PCAB4tp}).

In the subsequent proofs, we present more general and simpler derivations 
than the 
ones in \cite[section 7]{ChI91a} and \cite[sections 2 and 3]{ChanHansen}, 
and add more details for comprehension.

The key observation is that a judiciously chosen permutation can reveal a largest
singular value in a diagonal element of the triangular matrix in a QR decomposition.
The next statement represents part of 
\cite[Theorem 2.1]{ChanHansen}, however with a 
simpler proof that does not require 
a pseudo inverse as in \cite[Theorems 6.1 and 6.2]{ChanHansen}.

\begin{lemma}[Revealing a largest singular value]\label{l_PCAB4}
Let $\vv$ with $\|\vv\|_2=1$ be a right singular vector of
$\mb\in\real^{m\times m}$ associated with a largest
singular value $\sigma_1(\mb)$, so that $\|\mb\vv\|_2=\sigma_1(\mb)$.
 Let $\mP\in\real^{m\times m}$ be a permutation that moves a magnitude-largest 
element of $\vv$ to the top, $|(\mP^T\vv)_1|=\|\vv\|_{\infty}$.
If $\mb\mP=\mq\mr$ is an unpivoted QR decomposition (\ref{e_qr}) of~$\mb\mP$, 
then 
the leading diagonal element of the upper triangular matrix~$\mr$ satisfies
\begin{align*}
\sigma_1(\mb)/\sqrt{m}\leq |r_{11}|\leq \sigma_1 (\mb).
\end{align*}
\end{lemma}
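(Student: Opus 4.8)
The plan is to follow the template of Lemma~\ref{l_PCAB1}, noting that the largest-singular-value case is in fact the easier one: $r_{11}$ occupies the first column of the triangular factor, so it is governed by a single column of $\mb$ rather than by a triangular back-substitution, and no information from the off-diagonal entries $r_{1j}$ is needed. First I would record the upper bound. Since $\mP$ is a permutation and $\mq$ has orthonormal columns, $\mr=\mq^T\mb\mP$ has the same singular values as $\mb$; as $r_{11}$ is (up to sign) the only singular value of the $1\times1$ leading principal submatrix of $\mr$, singular value interlacing~(\ref{e_inter}) yields $|r_{11}|\le\sigma_1(\mb)$. Equivalently, $|r_{11}|$ is the Euclidean norm of a column of $\mb$, hence at most $\|\mb\|_2$.

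For the lower bound, I would dispatch the trivial case $\sigma_1(\mb)=0$ and otherwise set $\vu=\mb\vv/\sigma_1(\mb)$, a unit-norm left singular vector. Because $\mb^T\mb\vv=\sigma_1(\mb)^2\vv$, this $\vu$ satisfies $\mb^T\vu=\sigma_1(\mb)\vv$, equivalently $\vu^T\mb=\sigma_1(\mb)\vv^T$. Next I would identify $r_{11}$ with a column norm: writing $\mP\ve_1=\ve_{i_1}$, the upper triangularity of $\mr$ gives $\mb\ve_{i_1}=\mb\mP\ve_1=\mq\mr\ve_1=r_{11}\mq\ve_1$, so $|r_{11}|=\|\mb\ve_{i_1}\|_2$. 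The pivot rule forces $|(\mP^T\vv)_1|=|v_{i_1}|=\|\vv\|_\infty\ge1/\sqrt m$, the last step because $\vv\in\real^m$ has unit two-norm.

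Combining these observations, I would conclude with
\begin{align*}
|r_{11}|=\|\mb\ve_{i_1}\|_2\ge|\vu^T\mb\ve_{i_1}|=\sigma_1(\mb)\,|v_{i_1}|\ge\sigma_1(\mb)/\sqrt m,
\end{align*}
where the first inequality is Cauchy--Schwarz together with $\|\vu\|_2=1$, and the middle equality uses $\vu^T\mb=\sigma_1(\mb)\vv^T$. There is no real obstacle beyond careful bookkeeping of the permutation indices and being clear that $r_{11}$ is unaffected by the other entries in the first row of $\mr$; the only conceptual point worth emphasizing is the contrast with Lemma~\ref{l_PCAB1}, where the relevant diagonal entry sits at the \emph{bottom} of $\mr$ and the triangular structure has to be invoked through the full solve $\mr(\mP^T\vv)$, whereas here a single column estimate suffices.
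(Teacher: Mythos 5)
Your proof is correct and follows essentially the same route as the paper's: the upper bound via interlacing, and the lower bound by pairing the left singular vector $\vu$ (with $\vu^T\mb=\sigma_1(\mb)\vv^T$) against the pivot column, using Cauchy--Schwarz and $\|\vv\|_\infty\ge 1/\sqrt{m}$. Your identification $|r_{11}|=\|\mb\mP\ve_1\|_2$ is just the paper's step $\|\mr\ve_1\|_2=|r_{11}|$ read through the unitary invariance of the QR factorization, so the two arguments coincide.
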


\begin{proof}
The upper bound follows from singular value interlacing (\ref{e_inter}).
As for the lower bound, the relation between the right singular 
vector $\vv$ and a corresponding left singular vector $\vu$ with 
$\mb^T\vu=\sigma_1(\mb)\vv$ and $\|\vu\|_2=1$  implies
\begin{align*}
\sigma_1(\mb)\,\mP^T\vv=\mP^T\mb\,\vu=\mr^T\mq^T\vu.
\end{align*}
From this,  the lower triangular nature of $\mr^T$, the Cauchy Schwartz inequality, 
and $\|\vu\|_2=1$ follows for the leading element 
\begin{align*}
\sigma_1(\mb)\|\vv\|_{\infty}=|\sigma_1(\mb)(\mP^T\vv)_1|=
|\ve_1^T\mr^T(\mq^T\vu)|\leq \|\mr\ve_1\|_2\|\mq^T\vu\|_2=|r_{11}|.
\end{align*}
Then $\|\vv\|_{\infty}\geq 1/\sqrt{m}$
follows from the fact that $\vv\in\real^m$ has unit two-norm $\|\vv\|_2=1$, so at least 
one of its $m$ elements must be sufficiently large.
\end{proof}

\begin{lemma}[Correctness of Algorithm~\ref{alg_PCAB4}]\label{l_PCAB4b}
Let $\mchi\in\real^{n\times p}$ with $n\geq p$ have singular values
$\sigma_1\geq \cdots\geq \sigma_p\geq 0$, and pick some $1\leq k<p$. Then
Algorithm~\ref{alg_PCAB4}  computes a QR decomposition 
$\mchi\mP=\mq\mr$
where the $k$ leading diagonal elements of $\mr$ satisfy
\begin{align*}
\sigma_{\ell}/\sqrt{p-\ell+1}\leq |\mr_{\ell\ell}|, \qquad 1\leq \ell\leq k.
\end{align*}
\end{lemma}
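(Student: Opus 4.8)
The plan is to run the same induction as in the proof of Lemma~\ref{l_PCAB1b}, but now tracking a \emph{largest} singular value in a \emph{leading} diagonal entry rather than a smallest one in a trailing entry. The engine is Lemma~\ref{l_PCAB4}: a permutation that moves a magnitude-largest component of a dominant right singular vector to the top exposes, in the leading diagonal entry of the resulting triangular factor, the largest singular value of the block up to a factor $\sqrt{m}$.

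First I would set up iterate notation for Algorithm~\ref{alg_PCAB4}: write the initial unpivoted decomposition as $\mchi\mP^{(0)}=\mq^{(0)}\mr^{(0)}$, and for $\ell=0,1,\ldots,k$ let $\mchi\mP^{(\ell)}=\mq^{(\ell)}\mr^{(\ell)}$ be the decomposition after the $\ell$th pass, with $\mr^{(\ell)}$ partitioned so that its trailing block $\mr_{22}^{(\ell)}\in\real^{(p-\ell)\times(p-\ell)}$ is the one on which the next pass operates. The induction hypothesis at the start of pass $\ell$ is that $\sigma_j/\sqrt{p-j+1}\le|\mr_{jj}^{(\ell-1)}|$ for $1\le j\le\ell-1$, and that $\mr^{(\ell-1)}$ has the same singular values as $\mchi$.

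For the induction step I would apply Lemma~\ref{l_PCAB4} with $\mb=\mr_{22}^{(\ell-1)}$ and $m=p-\ell+1$: the permutation $\widetilde{\mP}^{(\ell)}$ chosen by the algorithm, followed by the unpivoted QR $\mr_{22}^{(\ell-1)}\widetilde{\mP}^{(\ell)}=\widetilde{\mq}^{(\ell)}\widetilde{\mr}_{22}^{(\ell)}$, produces a new trailing block whose leading diagonal entry, which becomes $\mr_{\ell\ell}^{(\ell)}$, satisfies $|\mr_{\ell\ell}^{(\ell)}|\ge\sigma_1(\mr_{22}^{(\ell-1)})/\sqrt{p-\ell+1}$. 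Then I would invoke singular-value interlacing~(\ref{e_inter}), applied to $\mr^{(\ell-1)}$ partitioned at index $\ell-1$, together with the fact that $\mr^{(\ell-1)}$ inherits the singular values of $\mchi$, to get $\sigma_1(\mr_{22}^{(\ell-1)})\ge\sigma_\ell(\mr^{(\ell-1)})=\sigma_\ell$. Combining the two inequalities yields $|\mr_{\ell\ell}^{(\ell)}|\ge\sigma_\ell/\sqrt{p-\ell+1}$. To close the induction I would use the ``frozen leading block'' observation: the pass-$\ell$ update acts only on the trailing block, being embedded via $\mathrm{diag}(\mi_{\ell-1},\widetilde{\mq}^{(\ell)})$ and $\mathrm{diag}(\mi_{\ell-1},\widetilde{\mP}^{(\ell)})$, so the previously established diagonal entries $\mr_{11}^{(\ell-1)},\ldots,\mr_{\ell-1,\ell-1}^{(\ell-1)}$ are untouched, and the block update preserves the full set of singular values; hence after pass $k$ all bounds $\sigma_\ell/\sqrt{p-\ell+1}\le|\mr_{\ell\ell}|$, $1\le\ell\le k$, hold simultaneously in the returned factor $\mr$.

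The main obstacle is bookkeeping rather than conceptual: I must verify carefully that the trailing block at the start of each pass retains exactly the singular values needed for interlacing (i.e., that no earlier permutation or orthogonal update has disturbed them), and I must keep the two different index shifts straight throughout — the block \emph{size} $m=p-\ell+1$ used when invoking Lemma~\ref{l_PCAB4}, versus the block \emph{position} $\ell-1$ used when invoking interlacing~(\ref{e_inter}).
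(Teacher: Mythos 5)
Your proposal is correct and follows essentially the same route as the paper's proof: an induction over the passes of Algorithm~\ref{alg_PCAB4}, invoking Lemma~\ref{l_PCAB4} on the current trailing block of size $p-\ell+1$ and then interlacing~(\ref{e_inter}) at partition index $\ell-1$ to bound $\sigma_1(\mr_{22}^{(\ell-1)})\geq\sigma_\ell$, with the observation that later passes leave the already-established leading diagonal entries and the full set of singular values untouched. The index bookkeeping you flag is exactly the care the paper's own argument takes, and your handling of it is consistent.
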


\begin{proof}
This is an induction proof on the iterations~$\ell$ of Algorithm~\ref{alg_PCAB4}
with more discerning notation.
The initial pivoted decomposition reduces the problem size
\begin{align}\label{e_PCA4a}
\mchi\mP^{(0)}=\mq^{(0)}\mr^{(0)},
\end{align}
where $\mP^{(0)}\in\real^{p\times p}$ is a permutation, $\mq^{(0)}\in\real^{n \times p}$
has orthonormal columns, and $\mr^{(0)}\in\real^{p\times p}$ is upper triangular.

\paragraph{Induction basis}
Set $\mr_{22}^{(1)}=\mr^{(0)}\in\real^{p\times p}$, and let $\vv^{(1)}, \vu^{(1)}\in\real^p$
be right and left singular vectors associated with a largest singular value, 
\begin{align*}
\mr_{22}^{(1)}\vv^{(1)}=\sigma_1\vu^{(1)}, \qquad \|\vv^{(1)}\|_2=\|\vu^{(1)}\|_2=1.
\end{align*}
Determine a permutation $\widetilde{\mP}^{(1)}$ that moves a magnitude-largest element 
of $\vv^{(1)}$ to the top, 
\begin{align*}
|((\widetilde{\mP}^{(1)})^T\vv^{(1)})_1|=\|\vv^{(1)}\|_{\infty}\geq 1/\sqrt{p}.
\end{align*}
Compute an unpivoted QR decomposition 
$\mr_{22}^{(1)}\widetilde{\mP}^{(1)}=\widetilde{\mq}^{(1)}\widetilde{\mr}_{22}^{(1)}$,
where $\widetilde{\mq}^{(1)}\in\real^{p\times p}$ is an orthogonal matrix.
Lemma~\ref{l_PCAB4} implies that  the leading diagonal element of the
triangular matrix reveals a largest singular value,
$|(\widetilde{\mr}_{22}^{(1)})_{11}|\geq \sigma_1/\sqrt{p}$.
Insert this into the initial decomposition~(\ref{e_PCA4a})
\begin{align*}
\mchi\mP^{(0)}=\mq^{(0)}\mr^{(0)}=\mq^{(0)}
\widetilde{\mq}^{(1)}\widetilde{\mr}_{22}^{(1)}(\widetilde{\mP}^{(1)})^T.
\end{align*}
Multiply by $\widetilde{\mP}^{(1)}$ on the right,
\begin{align*}
\mchi\,\underbrace{\mP^{(0)}\widetilde{\mP}^{(1)}}_{\mP^{(1)}}=
\underbrace{\mq^{(0)}\widetilde{\mq}^{(1)}}_{\mq^{(1)}}\,
\underbrace{\widetilde{\mr}_{22}^{(1)}}_{\mr^{(1)}} \qquad
\text{where} \qquad |\mr^{(1)}_{22}|\geq \sigma_1/\sqrt{p}.
\end{align*}

\paragraph{Induction hypothesis}
Assume that $\mchi\mP^{(\ell)}=\mq^{(\ell)}\mr^{(\ell)}$ for $\ell<k$ with
\begin{align*} 
|\mr^{(\ell)}_{jj}|\geq \sigma_{j}/\sqrt{p-j+1}, \qquad 1\leq j\leq \ell.
\end{align*}

\paragraph{Induction step}
Here $\ell= k-1$. The dimension 
of the leading block is $\ell-1$, while the dimension of the trailing block
is $i\equiv p-(\ell-1)$. Partition
\begin{align}\label{e_PCA4d}
\mr^{(\ell)} =\begin{bmatrix}\mr_{11}^{(\ell)} &\mr^{(\ell)}_{12}\\ \vzero & \mr^{(\ell)}_{22}
\end{bmatrix}\qquad \mr^{(\ell)}_{11}\in\real^{(\ell-1)\times (\ell-1)}, \quad
\mr^{(\ell)}_{22}\in\real^{i\times i}.
\end{align}
Let $\vv^{(\ell+1)}, \vu^{(\ell+1)}\in\real^{i}$
be  right and left singular vectors associated with a largest singular value
of $\mr_{22}^{(\ell)}$,
\begin{align}\label{e_PCA4c}
\mr_{22}^{(\ell)}\vv^{(\ell+1)}=\sigma_{1}(\mr^{(\ell)}_{22})\,\vu^{(\ell+1)}, \qquad 
\|\vv^{(\ell+1)}\|_2=\|\vu^{(\ell+1)}\|_2=1.
\end{align}
Determine a permutation $\widetilde{\mP}^{(\ell+1)}\in\real^{i\times i}$ 
that moves a magnitude-largest element of $\vv^{(\ell+1)}$ to the top, 
\begin{align*}
|((\widetilde{\mP}^{(\ell+1)})^T\vv^{(\ell+1)})_{1}|=\|\vv^{(\ell+1)}\|_{\infty}\geq 1/\sqrt{i}.
\end{align*}
Compute an unpivoted QR decomposition 
$\mr_{22}^{(\ell)}\widetilde{\mP}^{(\ell+1)}=
\widetilde{\mq}^{(\ell+1)}\widetilde{\mr}_{22}^{(\ell+1)}$,
where $\widetilde{\mq}^{(\ell+1)}\in\real^{i\times i}$ is an orthogonal matrix.
Lemma~\ref{l_PCAB4} implies that  the leading diagonal element of the
triangular matrix reveals a largest singular value,
\begin{align}\label{e_PCA4b}
|(\widetilde{\mr}_{22}^{(\ell+1)})_{11}|\geq \sigma_1(\mr^{(\ell)}_{22})/\sqrt{i}.
\end{align}
Insert this into the decomposition $\mchi\mP^{(\ell)}=\mq^{(\ell)}\mr^{(\ell)}$
with partitioning (\ref{e_PCA4d}), and exploit the fact that the inverse of the
orthogonal matrix $\widetilde{\mq}^{(\ell+1)}$ equals
$(\widetilde{\mq}^{(\ell+1)})^T$,
\begin{align*}
\mchi\mP^{(\ell)}=\mq^{(\ell)}\mr^{(\ell)}
&=\mq^{(\ell)}\begin{bmatrix}\mr_{11}^{(\ell)} & \mr_{12}^{(\ell)} \\ \vzero& 
\widetilde{\mq}^{(\ell+1)}\widetilde{\mr}_{22}^{(\ell+1)}(\widetilde{\mP}^{(\ell+1)})^T 
\end{bmatrix}\\
&=\mq^{(\ell)}
\begin{bmatrix}\mi_{\ell-1} & \vzero \\ \vzero & \widetilde{\mq}^{(\ell+1)} \end{bmatrix}
\begin{bmatrix}\mr_{11}^{(\ell)} & \mr_{12} ^{(\ell)}\\ \vzero&
\widetilde{\mr}_{22}^{(\ell+1)} \end{bmatrix}
\begin{bmatrix} \mi_{\ell-1} & \vzero\\ \vzero &(\widetilde{\mP}^{(\ell+1)})^T\end{bmatrix}
\end{align*}
Multiply by the permutation on the right,
\begin{align*}
\mchi\underbrace{\mP^{(\ell)}\begin{bmatrix}\mi_{\ell-1}& \vzero\\ 
\vzero & \widetilde{\mP}^{(\ell+1)}\end{bmatrix}}_{\mP^{(\ell+1)}}
&=\underbrace{\mq^{(\ell)}\begin{bmatrix}\mi_{\ell-1} & \vzero\\
\vzero& \widetilde{\mq}^{(\ell+1)}\end{bmatrix}}_{\mq^{(\ell+1)}}
\underbrace{\begin{bmatrix}\mr_{11}^{(\ell)} & \mr_{12}^{(\ell)}\\ 
\vzero & \widetilde{\mr}_{22}^{(\ell+1)}\end{bmatrix}}_{\mr^{(\ell+1)}}.
\end{align*}
From (\ref{e_PCA4b}), interlacing (\ref{e_inter}), and the fact that $\mr^{(\ell)}$ has the 
same singular values as~$\mchi$ follows
\begin{align*}
 |\mr^{(\ell+1)}_{\ell\ell}|=|(\widetilde{\mr}_{22}^{(\ell+1)})_{11}| \geq 
\sigma_1(\mr^{(\ell)}_{22})/\sqrt{i}\geq \sigma_{\ell}(\mr^{(\ell)})/\sqrt{i}
= \sigma_{\ell}/\sqrt{i}.
\end{align*}
Together with the induction hypothesis, and $\ell=k-1$ this implies
\begin{align*}
 |\mr^{(k)}_{jj}| \geq \sigma_{j}/\sqrt{p-j+1}, \qquad 1\leq j\leq k.
\end{align*}
\end{proof}

\begin{lemma}[Proof of Theorem~\ref{t_PCAB4}]\label{l_PCAB4tp}
Let $\mchi\in\real^{n\times p}$ with $n\geq p$ have singular 
values $\sigma_1\geq \cdots \geq \sigma_p\geq 0$, and pick some $1\leq k<p$.
Then Algorithm~\ref{alg_PCAB4} computes a QR decomposition 
\begin{align*}
\mchi\mP=\begin{bmatrix}\mq_1 & \mq_2\end{bmatrix}
\begin{bmatrix}\mr_{11}&\mr_{12}\\\vzero & \mr_{22}\end{bmatrix},
\end{align*}
where the smallest singular value of $\mr_{11}\in\real^{k\times k}$ is bounded by
\begin{align*}
\sigma_k(\mr_{11})\geq \frac{\sigma_k}{p\,\|\mw^{-1}\|_2}.
\end{align*}
Here $\mw\in\real^{k\times k}$ is a triangular matrix with diagonal
elements $|w_{jj}|=1$, $1\leq j\leq k$; offdiagonal elements $|w_{ij}|\leq 1$ 
for $i\neq j$;  and
\begin{align*}
\|\mw^{-1}\|_2\leq 2^{k-1}.
\end{align*}
\end{lemma}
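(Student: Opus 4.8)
The plan is to transpose the argument of Lemma~\ref{l_PCAB1tp}: where that proof squeezes the trailing block $\mr_{22}$ against a matrix whose columns are right singular vectors of the \emph{leading} blocks of the triangular factor, here I would squeeze $\mr_{11}^{-1}$ against a matrix built from right singular vectors of the \emph{trailing} blocks of the triangular factor delivered by Algorithm~\ref{alg_PCAB4}.

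Fix the output $\mchi\mP=\mq\mr$ of Algorithm~\ref{alg_PCAB4}. For $1\le\ell\le k$, let $\mr_{22}^{(\ell)}\in\real^{(p-\ell+1)\times(p-\ell+1)}$ be the trailing diagonal block of the final $\mr$ in rows and columns $\ell,\dots,p$, and let $\vv^{(\ell)},\vu^{(\ell)}$ be a right/left singular vector pair of $\mr_{22}^{(\ell)}$ for its largest singular value, so $\mr_{22}^{(\ell)}\vv^{(\ell)}=\sigma_1(\mr_{22}^{(\ell)})\vu^{(\ell)}$, $\|\vv^{(\ell)}\|_2=\|\vu^{(\ell)}\|_2=1$. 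Because step~$\ell$ permutes a magnitude-largest entry of the relevant singular vector to the top of the block (Lemma~\ref{l_PCAB4}), and later steps only permute and orthogonally transform rows and columns $\ell+1,\dots,p$, the right singular vector of the \emph{final} block $\mr_{22}^{(\ell)}$ differs from the one at step~$\ell$ only by a permutation of its components $2,\dots,p-\ell+1$; hence one may take $\vv^{(\ell)}$ with $|(\vv^{(\ell)})_1|\ge 1/\sqrt{p-\ell+1}$ and $|(\vv^{(\ell)})_j|\le|(\vv^{(\ell)})_1|$. Embedding $\vu^{(\ell)},\vv^{(\ell)}$ into $\real^p$ with $\ell-1$ leading zeros as $\widehat\vu^{(\ell)},\widehat\vv^{(\ell)}$, the upper-triangular structure of $\mr$ yields the transpose identity $\mr^T\widehat\vu^{(\ell)}=\sigma_1(\mr_{22}^{(\ell)})\widehat\vv^{(\ell)}$, the dual of the relation $\mr\begin{bmatrix}\vv^{(\ell)}\\\vzero\end{bmatrix}=\sigma_\ell(\mr_{11}^{(\ell)})\begin{bmatrix}\vu^{(\ell)}\\\vzero\end{bmatrix}$ in Lemma~\ref{l_PCAB1tp}.

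Next I would collect columns: set $\mz\equiv[\widehat\vv^{(1)}\ \cdots\ \widehat\vv^{(k)}]$, $\mathbf{Y}\equiv[\widehat\vu^{(1)}\ \cdots\ \widehat\vu^{(k)}]$, and $\boldsymbol{\Lambda}\equiv\diag(\sigma_1(\mr_{22}^{(1)}),\dots,\sigma_1(\mr_{22}^{(k)}))$, so that $\mz=\mr^T\mathbf{Y}\boldsymbol{\Lambda}^{-1}$. The staircase zero pattern of the $\widehat\vv^{(\ell)}$ makes the leading $k\times k$ block of $\mz$ lower triangular; factoring out its diagonal writes that block as $\mw\md$, where $\mw$ is unit lower triangular with $|w_{ij}|\le 1$ (because $(\vv^{(\ell)})_1$ is a magnitude-largest entry of $\vv^{(\ell)}$) and $\md$ is diagonal with $\|\md^{-1}\|_2\le\sqrt p$. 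On the other hand, the block form $\mr^T=\begin{bmatrix}\mr_{11}^T&\vzero\\\mr_{12}^T&\mr_{22}^T\end{bmatrix}$ shows that the leading $k$ rows of $\mr^T\mathbf{Y}$ equal $\mr_{11}^T\mathbf{Y}_1$, with $\mathbf{Y}_1$ the leading $k$ rows of $\mathbf{Y}$; hence $\mw\md=\mr_{11}^T\mathbf{Y}_1\boldsymbol{\Lambda}^{-1}$. Since $\mw\md$ is invertible and all four $k\times k$ factors are square, $\mr_{11}$, $\mathbf{Y}_1$ and $\boldsymbol{\Lambda}$ are each invertible; this is also how the nonsingularity of $\mr_{11}$, i.e.\ $\sigma_k(\mr_{11})>0$, falls out (the degenerate case $\sigma_k=0$, in which some $\sigma_1(\mr_{22}^{(\ell)})=0$, makes the asserted bound vacuous and is dealt with at the outset). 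Then $\mr_{11}^{-T}=\mathbf{Y}_1\boldsymbol{\Lambda}^{-1}\md^{-1}\mw^{-1}$, and bounding norms with $\|\mathbf{Y}_1\|_2\le\|\mathbf{Y}\|_2\le\sqrt k$, $\|\boldsymbol{\Lambda}^{-1}\|_2=1/\min_\ell\sigma_1(\mr_{22}^{(\ell)})\le 1/\sigma_k$ (interlacing~(\ref{e_inter}) gives $\sigma_1(\mr_{22}^{(\ell)})\ge\sigma_\ell\ge\sigma_k$), $\|\md^{-1}\|_2\le\sqrt p$, and $\sqrt k\,\sqrt p\le p$, yields $\sigma_k(\mr_{11})=1/\|\mr_{11}^{-1}\|_2\ge\sigma_k/(p\,\|\mw^{-1}\|_2)$. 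The remaining bound $\|\mw^{-1}\|_2\le 2^{k-1}$ is \cite[Theorem 8.14]{Higham2002}, exactly as in Lemma~\ref{l_PCAB1tp}.

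The main obstacle is the same bookkeeping as in Lemma~\ref{l_PCAB1tp}: one must argue carefully that the right singular vectors $\vv^{(\ell)}$ of the \emph{final} trailing blocks still carry their magnitude-largest entry in the first slot, even though Algorithm~\ref{alg_PCAB4} only enforces this for the block present at step~$\ell$ while later steps keep transforming $\mr_{22}^{(\ell)}$; the crux is verifying that those later transformations touch only rows and columns $\ell+1,\dots,p$, so that the step-$\ell$ singular vector is only permuted in its components $2,\dots,p-\ell+1$. A lesser subtlety is keeping the invertibility claims clean, i.e.\ deducing nonsingularity of $\mr_{11}$ and $\mathbf{Y}_1$ from that of the triangular factor $\mw\md$ rather than assuming $\mchi$ has full column rank, and isolating the trivial case $\sigma_k(\mchi)=0$ up front.
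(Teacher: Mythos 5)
Your construction is essentially the paper's own proof: the same embedded singular-vector matrices $\mz$ and $\my$, the same factorization of the leading $k\times k$ block of $\mz$ as $\mw\md$, and the same key identity $\mr_{11}^T\my_1=\mw\md\,\mdel$ (your $\boldsymbol{\Lambda}$ is the paper's $\mdel$), including the same bookkeeping point that later iterations only permute components $2,\dots,p-\ell+1$ of each step-$\ell$ singular vector. The only difference is the final step: you invert the identity and bound $\|\mr_{11}^{-1}\|_2$ as a product of norms of inverses, isolating the degenerate case $\sigma_k=0$ up front, whereas the paper applies the Weyl product inequalities to $\sigma_k(\mw\md\mdel)$ and $\sigma_k(\mr_{11}^T\my_1)$ directly, which is precisely how it avoids inverses and the assumption $\sigma_k>0$; both routes yield the stated bound.
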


\begin{proof}
Let $\mchi\mP=\mq\mr$ be computed by Algorithm~\ref{alg_PCAB4} with 
input $k$. The proof is an extension of Lemma~\ref{l_PCAB4}, and
is more general than the one in \cite[section 7]{ChI91a} due to the absence
of inverses and no need for the requirement $\sigma_k>0$.

From the right singular vectors in Algorithm~\ref{alg_PCAB4}
we construct a matrix $\mz$, and also a matrix $\my$ of left singular vectors.
Then we bound the $k$th singular value of a top submatrix of 
$\mr^T\my$, to derive a lower  bound for $\sigma_k(\mr_{11})$.

\paragraph{Construction of $\mz$ and $\my$}
Consider the partitionings as in~(\ref{e_PCA4d}) with $i\equiv p-(\ell-1)$
\begin{align*}
\mr^{(\ell)} =\begin{bmatrix}\mr_{11}^{(\ell)} &\mr^{(\ell)}_{12}\\ \vzero & \mr^{(\ell)}_{22}
\end{bmatrix}\qquad \mr^{(\ell)}_{11}\in\real^{(\ell-1)\times (\ell-1)}, \quad
\mr^{(\ell)}_{22}\in\real^{i\times i}, \quad 1\leq \ell\leq k.
\end{align*}
In the statement of this lemma, the partitioning is $\ell=k+1$.

Let $\vv^{(\ell)}, \vu^{(\ell)}\in\real^i$ be right and left singular vectors associated
with a largest singular value of $\mr_{22}^{(\ell)}$,
\begin{align*}
\mr_{22}^{(\ell)}\vv^{(\ell)}=\sigma_1(\mr_{22}^{(\ell)})\,\vu^{(\ell)},\qquad 
\|\vv^{(\ell)}\|_2=\|\vu^{(\ell)}\|_2=1, \qquad 1\leq \ell\leq k.
\end{align*}
Algorithm~\ref{alg_PCAB4} has permuted the right singular vectors so that
a magnitude-largest element is at  the top, for $1\leq \ell\leq k$
\begin{align}\label{e_PCA4f}
|\vv^{(\ell)}_1|\geq 1/\sqrt{i}
\quad \text{and}\quad  |\vv^{(\ell)}_j|\leq |\vv^{(\ell)}_1|, \qquad 1< j\leq i.
\end{align}
The leading elements in singular vectors associated with larger-dimensional
blocks are not affected by subsequent permutations,
see  (\ref{e_PCA4d}), where permutations in the $(2,2)$ block do not
affect the $(1,1)$ block and its placement of diagonal elements.

Construct a lower trapezoidal  matrix 
$\mz=\begin{bmatrix}\vz_1 & \cdots &\vz_k\end{bmatrix}\in\real^{p\times k}$,
whose columns are the right singular vectors 
\begin{align*}
\vz_{\ell}=\begin{bmatrix} \vzero_{\ell-1}\\ \vv^{(\ell)}\end{bmatrix}, \qquad
1\leq \ell\leq k.
\end{align*}
Factor out the diagonal elements and distinguish the leading $k\times k$ submatrix
\begin{align}\label{e_PCA4h}
\mz=\begin{bmatrix}\mw \\ \mz_2\end{bmatrix}\md, \qquad \text{where}\qquad
\md=\begin{bmatrix}v_1^{(1)} && \\ &\ddots & \\ && v_1^{(k)}\end{bmatrix}
\in\real^{k\times k}
\end{align}
has diagonal elements 
$|d_{\ell\ell}|=|v_1^{(\ell)}|\geq 1/\sqrt{p-\ell+1}$, $1\leq \ell\leq k$.
From (\ref{e_PCA4f}) follows that  $\mw\in\real^{k\times k}$
is a nonsingular lower triangular matrix with elements
\begin{align*}
|w_{\ell\ell}|=1, \qquad |w_{j\ell}|\leq 1, \qquad 1\leq\ell\leq k, \quad j>\ell.
\end{align*}
Analogously, construct a second lower trapezoidal  matrix 
$\my=\begin{bmatrix}\vy_1 & \cdots &\vy_k\end{bmatrix}\in\real^{p\times k}$,
whose columns are the right left vectors 
\begin{align*}
\vy_{\ell}=\begin{bmatrix} \vzero_{\ell-1}\\ \vu^{(\ell)}\end{bmatrix}, \qquad
\|\vy_{\ell}\|_2=1,\qquad 1\leq \ell\leq k,
\end{align*}
and distinguish the leading $k\times k$ submatrix
\begin{align}\label{e_PCA4j}
\my=\begin{bmatrix}\my_1 \\ \my_2\end{bmatrix}, \qquad \text{where}\qquad
\my_1\in\real^{k\times k},\qquad \|\my_1\|_2\leq \sqrt{k}.
\end{align}

\paragraph{Bounds for $\sigma_k(\mr_{11}^T\my_1)$}
We derive an upper and a lower bound.

The columns of $\mr^T\my$ are for $1\leq \ell\leq k$,
\begin{align*}
\mr^T\vy_{\ell}=\begin{bmatrix}(\mr_{11}^{(\ell)})^T & \vzero\\
(\mr_{12}^{(\ell)})^T & (\mr_{22}^{(\ell)})^T\end{bmatrix}
\begin{bmatrix}\vzero_{\ell-1}\\ \vu_{\ell}\end{bmatrix}=
\begin{bmatrix}\vzero_{\ell-1}\\  (\mr_{22}^{(\ell)})^T\vu_{\ell}\end{bmatrix}=
\begin{bmatrix} \vzero_{\ell_1} \\ \sigma_1(\mr_{22}^{\ell})\vv_{\ell}\end{bmatrix}
=\sigma_1(\mr_{22}^{\ell})\vz_{\ell}.
\end{align*}
Collecting all the columns gives
\begin{align*}
\mr^T\my=\mz\mdel\qquad \text{where}\quad
\mdel=\begin{bmatrix} \sigma_1(\mr_{22}^{(1)}) && \\ &\ddots & \\ &&
\sigma_1(\mr_{22}^{(k)})\end{bmatrix}\in\real^{k\times k}.
\end{align*}
With the partitioning of $\mr$ as in the statement of this lemma, the top 
$k\times k$ submatrix of $\mr^T\my=\mz\mdel$ equals
\begin{align*}
\mr_{11}^T\my_1=\mw\md\mdel.
\end{align*}
First derive the lower bound from the right side. The Weyl product inequalities 
\cite[7.3.P16]{HoJ12} imply
\begin{align}\label{e_PCA4e}
\sigma_k(\mr_{11}^T\my_1)=\sigma_k(\mw\md\mdel)\geq \sigma_k(\mw)\,
\sigma_k(\md)\,\sigma_k(\mdel)\geq
\frac{\sigma_k}{\sqrt{p-k+1}\,\|\mw^{-1}\|_2}
\end{align}
where the last inequality follows from applying interlacing (\ref{e_inter}) to
\begin{align*}
\sigma_k(\mdel)=\min_{1\leq\ell\leq k}{\sigma_1(\mr_{22}^{(\ell)})}\geq \sigma_k,
\end{align*}
and bounding the diagonal elements of $\md$ in (\ref{e_PCA4h}) by 
\begin{align*}
\sigma_k(\md)=\min_{1\leq \ell\leq k}{|v_1^{(\ell)}|}\geq 1/\sqrt{p-k+1}.
\end{align*}
Now derive the lower bound from the left side. The Weyl product inequalities 
\cite[7.3.P16]{HoJ12} and (\ref{e_PCA4j}) imply
\begin{align*}
\sigma_k(\mr_{11}^T\my_1)\leq \sigma_k(\mr_{11})\|\my_1\|_2
\leq \sqrt{k}\sigma_k(\mr_{11}).
\end{align*}
At last, combine this with (\ref{e_PCA4e}) to obtain
\begin{align*}
\sigma_k(\mr_{11})\geq \frac{\sigma_k}{\sqrt{k(p-k+1)}\,\|\mw^{-1}\|_2}
\geq \frac{\sigma_k}{p\,\|\mw^{-1}\|_2}.
\end{align*}
The bound for $\|\mw^{-1}\|_2$ follows as in the proof of Lemma~\ref{l_PCAB1tp}.
\end{proof}

\subsection{Proof of Theorem~\ref{t_PCAB3}}
\label{s_PCAB3p}
The following is an extension of \cite[Theorem 5.5.2]{GovL13}.

\begin{lemma}[Proof of Theorem~\ref{t_PCAB3}]
Let $\mchi\in\real^{n\times p}$ with $n\geq p$
have singular values $\sigma_1\geq \cdots \geq\sigma_p\geq 0$, and pick some $1\leq k<p$. 
If Algorithm~\ref{alg_PCAB3} computes a QR decomposition 
\begin{align*}
\mchi\mP=\begin{bmatrix}\mq_1 & \mq_2\end{bmatrix}
\begin{bmatrix}\mr_{11}&\mr_{12}\\\vzero & \mr_{22}\end{bmatrix},
\end{align*}
and chooses the permutation $\mP$ so that  $\mv_{11}\in\real^{k\times k}$ is nonsingular, 
then $\mr_{11}\in\real^{k\times k}$ and
$\mr_{22}\in\real^{(p-k)\times (p-k)}$ satisfy
\begin{align*}
\sigma_k/\|\mv_{11}^{-1}\|_2\leq \sigma_k(\mr_{11})\leq \sigma_k\\
\sigma_{k+1}\leq \sigma_1(\mr_{22})\leq 
\|\mv_{11}^{-1}\|_2\,\sigma_{k+1}.
\end{align*}
 \end{lemma}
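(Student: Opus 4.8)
The plan is to run the classical subset-selection argument (an extension of \cite[Theorem 5.5.2]{GovL13}), reading both estimates off the orthogonality relations of $\mv$. First I reduce. Since $\mchi_1=\mq_1\mr_{11}$ with $\mq_1$ having orthonormal columns, (\ref{e_c1b}) gives $\sigma_k(\mr_{11})=\sigma_k(\mchi_1)$ and (\ref{e_c2b}) gives $\sigma_1(\mr_{22})=\|(\mi-\mchi_1\mchi_1^{\dagger})\mchi\|_2$; the upper bound $\sigma_k(\mr_{11})\le\sigma_k$ and the lower bound $\sigma_1(\mr_{22})\ge\sigma_{k+1}$ are exactly interlacing~(\ref{e_inter}). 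So it remains to prove $\sigma_k(\mr_{11})\ge\sigma_k/\|\mv_{11}^{-1}\|_2$ and $\sigma_1(\mr_{22})\le\|\mv_{11}^{-1}\|_2\,\sigma_{k+1}$.

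Next I set up notation. Write the SVD $\mr=\mU_r\msig\mv^T$ from the statement with $\msig=\begin{bmatrix}\msig_1 & \vzero\\ \vzero & \msig_2\end{bmatrix}$, $\msig_1\in\real^{k\times k}$, and $\mv$ partitioned into blocks $\mv_{11}\in\real^{k\times k}$, $\mv_{12}$, $\mv_{21}$, $\mv_{22}$; here $\mv_{11}$ is the leading block named in the statement, nonsingular by hypothesis. Since $\mchi\mP=\mq\mr=(\mq\mU_r)\msig\mv^T$ and $\mq\mU_r$ has orthonormal columns, this is an SVD of $\mchi\mP$; put $\mU\equiv\mq\mU_r$. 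Reading off the first $k$ columns of $\mchi\mP$,
\begin{align*}
\mchi_1=\mU\begin{bmatrix}\msig_1\mv_{11}^T\\ \msig_2\mv_{12}^T\end{bmatrix}.
\end{align*}
For any unit vector $x$, using that $\mU$ has orthonormal columns, that $\sigma_k$ is the smallest singular value of $\msig_1$, and that $\sigma_k(\mv_{11}^T)=1/\|\mv_{11}^{-1}\|_2$,
\begin{align*}
\|\mchi_1 x\|_2 \ \ge\ \|\msig_1\mv_{11}^T x\|_2 \ \ge\ \sigma_k\,\|\mv_{11}^T x\|_2 \ \ge\ \sigma_k/\|\mv_{11}^{-1}\|_2,
\end{align*}
so $\sigma_k(\mr_{11})=\sigma_k(\mchi_1)\ge\sigma_k/\|\mv_{11}^{-1}\|_2$.

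For the $\mr_{22}$ bound, since $\mi-\mchi_1\mchi_1^{\dagger}$ is an orthogonal projector of norm $\le1$ with $(\mi-\mchi_1\mchi_1^{\dagger})\mchi_1=\vzero$, and $\|\cdot\|_2$ is invariant under the permutation $\mP$, for \emph{every} $\mb\in\real^{k\times p}$
\begin{align*}
\sigma_1(\mr_{22})=\|(\mi-\mchi_1\mchi_1^{\dagger})\mchi\mP\|_2=\|(\mi-\mchi_1\mchi_1^{\dagger})(\mchi\mP-\mchi_1\mb)\|_2\le\|\mchi\mP-\mchi_1\mb\|_2.
\end{align*}
The decisive move is to take $\mb=\begin{bmatrix}\mi_k & \mv_{11}^{-T}\mv_{21}^T\end{bmatrix}$, legitimate since $\mv_{11}$ is nonsingular. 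A block computation from $\mchi\mP=\mU\msig\mv^T$, together with the identity $\mv_{22}^T-\mv_{12}^T\mv_{11}^{-T}\mv_{21}^T=\mv_{22}^{-1}$ (obtained by equating the $(2,2)$ block of $\mv^{-1}=\mv^T$ with its Schur-complement form and transposing, which also shows $\mv_{22}$ is nonsingular), collapses the residual to
\begin{align*}
\mchi\mP-\mchi_1\mb=\mU\begin{bmatrix}\vzero & \vzero\\ \vzero & \msig_2\mv_{22}^{-1}\end{bmatrix},
\end{align*}
hence $\sigma_1(\mr_{22})\le\|\msig_2\mv_{22}^{-1}\|_2\le\sigma_{k+1}\,\|\mv_{22}^{-1}\|_2$. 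Finally the orthogonality relations $\mv_{22}^T\mv_{22}=\mi-\mv_{12}^T\mv_{12}$ and $\mv_{12}\mv_{12}^T=\mi-\mv_{11}\mv_{11}^T$ give $\sigma_{p-k}(\mv_{22})^2=1-\|\mv_{12}\|_2^2=\sigma_k(\mv_{11})^2$, so $\|\mv_{22}^{-1}\|_2=\|\mv_{11}^{-1}\|_2$, which completes the bound.

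The main obstacle is the $\mr_{22}$ estimate: one has to recognize that the projector permits comparison against an arbitrary $\mchi_1\mb$, and then hit on the particular $\mb$ above so that the block-inverse identity for the orthogonal $\mv$ kills three of the four blocks of $\mchi\mP-\mchi_1\mb$, leaving only $\msig_2\mv_{22}^{-1}$. The rest---the $\mr_{11}$ chain, the two interlacing bounds, and the transpose bookkeeping in the block-inverse identity---is routine.
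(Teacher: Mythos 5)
Your proof is correct, and it reaches the two nontrivial bounds by a route that differs from the paper's in a substantive way. The paper re-triangularizes $\mr\mP$, partitions $\mU_r^T\mq_r$ into blocks, reads off the block equations $\mU_{11}\mr_{11}=\widehat{\msig}_1\mv_{11}$ and $\mr_{22}\mv_{22}^T=\mU_{22}^T\msig_2$ from the triangular/diagonal structure, and applies the Weyl product inequalities to each; it then invokes the CS decomposition to get $\|\mv_{11}^{-1}\|_2=\|\mv_{22}^{-1}\|_2$. Your $\mr_{11}$ argument is essentially the same estimate in different clothing (dropping the nonnegative $\|\msig_2\mv_{12}^Tx\|_2^2$ term rather than equating $(1,1)$ blocks), but your $\mr_{22}$ argument is genuinely different: you exploit the best-approximation property of the orthogonal projector, pick the explicit approximant $\mb=\begin{bmatrix}\mi_k & \mv_{11}^{-T}\mv_{21}^T\end{bmatrix}$, and use the block-inverse identity $\mv_{22}^{-1}=\mv_{22}^T-\mv_{12}^T\mv_{11}^{-T}\mv_{21}^T$ for an orthogonal matrix to collapse the residual to $\msig_2\mv_{22}^{-1}$. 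What this buys is a proof that never needs the re-triangularization or the partition of $\mU_r^T\mq_r$, and it makes transparent that the $\mr_{22}$ bound is really a statement about how well the selected columns approximate $\range(\mchi)$; the cost is that you must verify the Schur-complement bookkeeping and the nonsingularity of $\mv_{22}$, which the paper's block-equation route gets for free. Your elementary derivation of $\|\mv_{22}^{-1}\|_2=\|\mv_{11}^{-1}\|_2$ from $\mv_{22}^T\mv_{22}=\mi-\mv_{12}^T\mv_{12}$ and $\mv_{12}\mv_{12}^T=\mi-\mv_{11}\mv_{11}^T$ is a clean replacement for the citation of the CS decomposition. One cosmetic remark: the paper's $\mv_{11}$ is the triangular factor $\mq_1^T\mv_1^T\mP_1$ rather than the raw leading block of $\mP^T\mv$ that you use, but the two differ only by an orthogonal factor, so $\|\mv_{11}^{-1}\|_2$ is unaffected and the statements agree.
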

 
\begin{proof}
Let $\mchi=\mq\mr$
be a preliminary  unpivoted QR decomposition, where $\mq\in\real^{n\times p}$ has orthonormal columns, and $\mr\in\real^{p\times p}$
is upper triangular. Then let $\mr=\mU_r\msig\mv^T$ be an SVD of the triangular matrix as in Remark~\ref{r_CH}.
Distinguish
the matrix of $k$ largest singular values
$\msig_1\in\real^{k\times k}$ of $\mchi$, and the corresponding right singular 
vectors $\mv_1\in\real^{p\times k}$,
\begin{align*}
\msig=\begin{bmatrix}\msig_1 & \vzero\\ \vzero &\msig_2\end{bmatrix}\in\real^{p\times p},
\qquad 
\mv=\begin{bmatrix}\mv_1 &\mv_2\end{bmatrix}\in\real^{p\times p}.
\end{align*}

\paragraph{Main idea}
Perform a QR decomposition with column pivoting on $\mv_1^T$,
\begin{align*}
\mv_1^T\mP=\mq_1\begin{bmatrix}\mv_{11} & \mv_{12}\end{bmatrix},
\end{align*}
where $\mP\in\real^{p\times p}$ is a permutation matrix, 
$\mq_1\in\real^{k\times k}$ is an orthogonal matrix, 
and $\mv_{11}\in\real^{k\times k}$ is nonsingular upper triangular.
Partition commensurately,
\begin{align*}
\mv_2^T\mP=\begin{bmatrix}\mv_{21}& \mv_{22}\end{bmatrix},
\end{align*}
where $\mv_{22}\in\real^{(p-k)\times (p-k)}$.
Express the permuted upper triangular matrix $\mr\mP$ in terms of these partitions,
\begin{align}\label{e_rp}
\begin{split}
\mr\mP&=\mU_r\msig\mv^T\mP=
\mU_r\begin{bmatrix}\msig_1 & \vzero\\ \vzero & \msig_2\end{bmatrix}
\begin{bmatrix}\mq_1 & \vzero\\ \vzero&\mi\end{bmatrix}\begin{bmatrix}\mv_{11} & \mv_{12}\\ \mv_{21} & \mv_{22}\end{bmatrix}\\
&=\mU_r\begin{bmatrix}\widehat{\msig}_1 & \vzero\\ \vzero & \msig_2\end{bmatrix}
\begin{bmatrix}\mv_{11} & \mv_{12}\\ \mv_{21} & \mv_{22}\end{bmatrix}\qquad
\text{where}\quad
\widehat{\msig}_1\equiv\msig_1\mq_1.
\end{split}
\end{align}
Because $\mq_1$ is an orthogonal matrix, 
$\widehat{\msig}_1$ has the same singular values as $\msig_1$, that is,
\begin{align}\label{e_sigmaj}
\sigma_j(\widehat{\msig}_1)=\sigma_j(\msig_1)=\sigma_j,\qquad 1\leq j\leq k.
\end{align}
Re-triangularize by computing an unpivoted QR decomposition of $\mr\mP$,
\begin{align}\label{e_rpqr}
\mr\mP=\mq_r\begin{bmatrix} \mr_{11} & \mr_{12} \\ \vzero & \mr_{22}\end{bmatrix}
\end{align}
where $\mr_{11}\in\real^{k\times k}$ is upper triangular.

\paragraph{Inequality for $\mr_{11}$}
Equate (\ref{e_rpqr}) with 
(\ref{e_rp}) and move $\mU_r$ to the left
\begin{align*}
\mU_r^T\mq_r\begin{bmatrix} \mr_{11} & \mr_{12} \\ \vzero & \mr_{22}\end{bmatrix}
=\mU_r^T\mr\mP=
\begin{bmatrix}\widehat{\msig}_1 & \vzero\\ \vzero & \msig_2\end{bmatrix}
\begin{bmatrix}\mv_{11} & \mv_{12}\\ \mv_{21} & \mv_{22}\end{bmatrix}.
\end{align*}
The goal is to extract $\mr_{11}$. To this end partition 
\begin{align*}
\mU_r^T\mq_r=\begin{bmatrix}\mU_{11} & \mU_{12}\\ \mU_{21} & \mU_{22}\end{bmatrix}
\end{align*}
and substitute this into the above expression for $\mU_r^T\mr\mP$,
\begin{align*}
\begin{bmatrix}\mU_{11} & \mU_{12}\\ \mU_{21} & \mU_{22}\end{bmatrix}
\begin{bmatrix} \mr_{11} & \mr_{12} \\ \vzero & \mr_{22}\end{bmatrix}
=
\begin{bmatrix}\widehat{\msig}_1 & \vzero\\ \vzero & \msig_2\end{bmatrix}
\begin{bmatrix}\mv_{11} & \mv_{12}\\ \mv_{21} & \mv_{22}\end{bmatrix}.
\end{align*}
Due to the triangular and diagonal matrices, the (1,1) block of this equation is 
\begin{align*}
\mU_{11}\mr_{11}=\widehat{\msig}_1\mv_{11}.
\end{align*}
Apply the Weyl product inequalities for singular values \cite[(7.3.14)]{HoJ12}
to the smallest singular value of the matrices on both sides and remember
(\ref{e_sigmaj}),
\begin{align*}
\frac{\sigma_k}{\|\mv_{11}^{-1}\|_2}&=
\sigma_k(\widehat{\msig}_1)\sigma_k(\mv_{11})\leq \sigma_k(\widehat{\msig}_1\mv_{11})=
\sigma_k(\mU_{11}\mr_{11}).
\end{align*}
Because the orthogonal matrix $\mU$ has all singular values equal to one,
\begin{align*}
\sigma_k(\mU_{11}\mr_{11})\leq
\sigma_1(\mU_{11})\sigma_k(\mr_{11})\leq
\sigma_1(\mU)\sigma_k(\mr_{11})=\sigma_k(\mr_{11}).
\end{align*}
Combining the extreme ends of the sequence of inequalities gives
$\sigma_k/\|\mv_{11}^{-1}\|_2\leq \sigma_k(\mr_{11})$.

\paragraph{Inequality for $\mr_{22}$}
Again,
equate (\ref{e_rpqr}) with (\ref{e_rp}) but now move the $\mv$ matrix to the left, 
\begin{align*}
\begin{bmatrix} \mr_{11} & \mr_{12} \\ \vzero & \mr_{22}\end{bmatrix}
\begin{bmatrix}\mv_{11}^T & \mv_{21}^T\\ 
\mv_{12}^T & \mv_{22}^T\end{bmatrix}
=\begin{bmatrix}\mU_{11}^T & \mU_{21}^T\\ \mU_{12}^T & \mU_{22}^T\end{bmatrix}
\begin{bmatrix}\widehat{\msig}_1 & \vzero\\ \vzero & \msig_2\end{bmatrix}.
\end{align*}
As before, the triangular and diagonal matrices imply that the (2,2) block of this equation is 
\begin{align*}
\mr_{22}\mv_{22}^T=\mU_{22}^T\msig_2.
\end{align*}
Apply the Weyl product inequalities for singular values \cite[(7.3.14)]{HoJ12}
to the largest singular value of the matrices on both sides,
\begin{align*}
\frac{\sigma_1(\mr_{22})}{\|\mv_{22}^{-1}\|_2}
= \sigma_1(\mr_{22})\sigma_{p-k}(\mv_{22})
\leq 
\sigma_1(\mr_{22}\mv_{22}^T)=
\sigma_1(\mU_{22}^T\msig_2).
\end{align*}
Because the orthogonal matrix $\mU$ has all singular values equal to one,
\begin{align*}
\sigma_1(\mU_{22}^T\msig_2)\leq 
\sigma_1(\mU_{22})\sigma_1(\msig_2)\leq
\sigma_1(\mU)\sigma_{k+1}=\sigma_{k+1}.
\end{align*}

Since $\mv_{11}$ is nonsingular,  the CS decomposition \cite[Theorem 2.5.3]{GovL13}
 implies that  $\|\mv_{11}^{-1}\|_2=\|\mv_{22}^{-1}\|_2$.
 Combining the extreme ends of the sequence of the above inequalities gives
 $\sigma_1(\mr_{22})/\|\mv_{11}^{-1}\|_2\leq \sigma_{k+1}$.
\end{proof}

\subsection{Proof of Theorem~\ref{t_srrqr}}\label{s_srrqrp}

We prove the correctness of Algorithm~\ref{alg_srrqr}
(Lem\--ma~\ref{l_srrqr1}), and present a proof of Theorem~\ref{t_srrqr} (Lemma~\ref{l_srrqr}).

Our proofs follow those in \cite{GuE96} but without the full rank assumption on the sensitivity 
matrix and with more details.
To keep the proofs simple, we assume that the QR decompositions are implemented 
so that the upper triangular matrices have non-negative diagonal elements
\cite[Theorem~5.2.3]{GovL13}.

We prove the correctness of stopping criterion
of Algorithm~\ref{alg_srrqr}, which depends on
the row norms of $\mr_{11}^{-1}$ and the column norms of~$\mr_{22}$,
\begin{align*}
\omega_i(\mr_{11})&\equiv 1/\|\ve_i^T\mr_{11}^{-1}\|_2, \qquad 1\leq i\leq k\\
\gamma_j(\mr_{22})&\equiv \|\mr_{22}\ve_j\|_2,\qquad 1\leq j\leq p-k.
\end{align*}

\begin{lemma}[Correctness of Algorithm~\ref{alg_srrqr}]
\label{l_srrqr1}
Let 
\begin{align*}
    \mr = \begin{bmatrix} \mr_{11} & \mr_{12} \\ \vzero & \mr_{22} \end{bmatrix}\in\real^{p\times p}
\end{align*}
be upper triangular with non-negative
diagonal elements,  nonsingular
$\mr_{11} \in \real^{k \times k}$, and $\mr_{22} \in \real^{(p-k) \times (p-k)}$.
Let $\mP$ be a permutation that permutes columns $i$ and $k+j$ of $\mr$ for 
some $1 \leq i \leq k$ and some $1 \leq j \leq p-k$, and let
$\mr \mP=\widetilde{\mq} \widetilde{\mr}$ be an 
unpivoted QR decomposition with
\begin{align*}
\widetilde{\mr} =\begin{bmatrix} \widetilde{\mr}_{11} & \widetilde{\mr}_{12} \\ \vzero & \widetilde{\mr}_{22} \end{bmatrix}.
\end{align*}
Then 
\begin{align*}
 \rho_{ij}\equiv   \frac{\det(\widetilde{\mr}_{11})}{\det(\mr_{11})} = \sqrt{(\mr_{11}^{-1} \mr_{12})^2_{i,j} + (\gamma_j(\mr_{22})/\omega_i(\mr_{11}))^2}.
\end{align*}
\end{lemma}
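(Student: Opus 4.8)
The plan is to reduce the claim to a rank-two determinant update of a symmetric positive definite matrix. First I would exploit the upper triangular form of $\widetilde{\mr}$: partitioning $\widetilde{\mq}=[\,\widetilde{\mq}_1\ \ \widetilde{\mq}_2\,]$ conformally in $\mr\mP=\widetilde{\mq}\widetilde{\mr}$, the first $k$ columns of $\mr\mP$ equal $\widetilde{\mq}_1\widetilde{\mr}_{11}$, and since $\widetilde{\mq}_1$ has orthonormal columns, the Gram matrix of these $k$ columns is $\widetilde{\mr}_{11}^T\widetilde{\mr}_{11}$. These $k$ columns agree, up to a column permutation, with the $p\times k$ matrix $\mathbf{B}$ obtained from $\begin{bmatrix}\mr_{11}\\\vzero\end{bmatrix}$ by overwriting its $i$th column with column $k+j$ of $\mr$; since a column permutation changes neither $\det(\mathbf{B}^T\mathbf{B})$ nor $|\det(\widetilde{\mr}_{11})|$, we get $\det(\widetilde{\mr}_{11})^2=\det(\mathbf{B}^T\mathbf{B})$. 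Because the QR decompositions are normalized to have nonnegative diagonals, $\det(\widetilde{\mr}_{11})\ge0$ and $\det(\mr_{11})>0$, so it suffices to prove
\begin{align*}
\det(\mathbf{B}^T\mathbf{B})=\det(\mr_{11})^2\Bigl((\mr_{11}^{-1}\mr_{12})_{ij}^2+\bigl(\gamma_j(\mr_{22})/\omega_i(\mr_{11})\bigr)^2\Bigr)
\end{align*}
and take nonnegative square roots.

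Next I would write $\mathbf{B}$ out using the block upper triangular structure of $\mr$. Column $k+j$ of $\mr$ is $\begin{bmatrix}\mr_{12}\ve_j\\\mr_{22}\ve_j\end{bmatrix}$, so with $\mathbf{w}\equiv\mr_{11}^{-1}\mr_{12}\ve_j$, $\mathbf{a}\equiv\mathbf{w}-\ve_i$, $\mathbf{c}\equiv\mr_{22}\ve_j$ (hence $(\mr_{11}^{-1}\mr_{12})_{ij}=\ve_i^T\mathbf{w}$ and $\gamma_j(\mr_{22})=\|\mathbf{c}\|_2$) we have $\mathbf{B}=\begin{bmatrix}\mr_{11}(\mi_k+\mathbf{a}\ve_i^T)\\ \mathbf{c}\,\ve_i^T\end{bmatrix}$. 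Writing $\mathbf{M}\equiv\mr_{11}^T\mr_{11}$, expanding, and absorbing the $\ve_i\ve_i^T$ contribution gives the symmetric rank-two perturbation
\begin{align*}
\mathbf{B}^T\mathbf{B}=(\mi_k+\ve_i\mathbf{a}^T)\,\mathbf{M}\,(\mi_k+\mathbf{a}\ve_i^T)+\|\mathbf{c}\|_2^2\,\ve_i\ve_i^T=\mathbf{M}+\ve_i\mathbf{q}^T+\mathbf{q}\,\ve_i^T,\qquad \mathbf{q}\equiv\mathbf{M}\mathbf{a}+\tfrac{1}{2}\bigl(\mathbf{a}^T\mathbf{M}\mathbf{a}+\|\mathbf{c}\|_2^2\bigr)\ve_i.
\end{align*}

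Finally I would apply Sylvester's determinant identity to $\det(\mathbf{B}^T\mathbf{B})=\det(\mathbf{M})\,\det\bigl(\mi_k+\mathbf{M}^{-1}(\ve_i\mathbf{q}^T+\mathbf{q}\,\ve_i^T)\bigr)$, which collapses the second factor to the $2\times2$ determinant $(1+\alpha)^2-\beta\delta$ with $\alpha\equiv\ve_i^T\mathbf{M}^{-1}\mathbf{q}$, $\beta\equiv\ve_i^T\mathbf{M}^{-1}\ve_i=\|\ve_i^T\mr_{11}^{-1}\|_2^2=1/\omega_i(\mr_{11})^2$, and $\delta\equiv\mathbf{q}^T\mathbf{M}^{-1}\mathbf{q}$. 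Substituting the definitions of $\mathbf{q}$ and $\mathbf{a}$ and using $\ve_i^T\mathbf{a}=\ve_i^T\mathbf{w}-1$, the three scalars simplify so that all cross terms in $(1+\alpha)^2-\beta\delta$ cancel, leaving $(\ve_i^T\mathbf{w})^2+\beta\,\|\mathbf{c}\|_2^2$; multiplying by $\det(\mathbf{M})=\det(\mr_{11})^2$ yields the displayed identity. The main obstacle is precisely this concluding algebra: keeping the perturbation in symmetric rank-two form and verifying the cancellation of the cross terms in the $2\times2$ determinant. It is worth noting that the argument never needs $\widetilde{\mr}_{11}$ to be invertible, since $\mathbf{M}$ is invertible and everything is carried out at the level of $\det(\mathbf{B}^T\mathbf{B})$.
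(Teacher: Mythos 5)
Your argument is correct, and it takes a genuinely different route from the paper's proof. The paper first reduces to the special case $i=k$, $j=1$ by inserting auxiliary permutations inside the column blocks of $\mr_{11}$ and $\mr_{22}$ and re-triangularizing, checking that $\det(\mr_{11})$, $(\mr_{11}^{-1}\mr_{12})_{i,j}$, $\omega_i(\mr_{11})$ and $\gamma_j(\mr_{22})$ are all invariant under this reduction; it then writes out the blocks of $\mr$ explicitly around rows and columns $k$ and $k+1$ and observes that a single Givens rotation re-triangularizes $\mr\mP$, placing $\sqrt{\beta^2+\gamma^2}$ in position $(k,k)$, from which the determinant ratio $\sqrt{\beta^2+\gamma^2}/\omega$ is read off. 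You instead work with the Gram matrix of the first $k$ columns of $\mr\mP$, so that $\det(\widetilde{\mr}_{11})^2=\det(\mathbf{B}^T\mathbf{B})$, recast $\mathbf{B}^T\mathbf{B}$ as a symmetric rank-two perturbation of $\mathbf{M}=\mr_{11}^T\mr_{11}$, and evaluate the determinant via the $2\times 2$ Sylvester identity; I have checked that the cross terms in $(1+\alpha)^2-\beta\delta$ do cancel to leave $(\ve_i^T\mathbf{w})^2+\|\mathbf{c}\|_2^2/\omega_i(\mr_{11})^2$ exactly as you claim. What your approach buys is that it handles general $(i,j)$ in one stroke, with no reduction step, no invariance argument, and no explicit construction of the orthogonal factor $\widetilde{\mq}$; what it costs is the somewhat heavier algebra at the end, whereas the paper's computation for the reduced case is a one-line $2\times 2$ rotation. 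Your closing remark that $\widetilde{\mr}_{11}$ need not be invertible is accurate and applies equally to the paper's argument; both proofs use the non-negative-diagonal normalization of the QR factors to fix the sign of the determinant ratio, which the paper states as a standing assumption in the appendix.
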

 
 \begin{proof} 
We give the proof for the special case $i = k$ and $j=1$, and first
argue that this represents no loss of generality.
Note that column~$j$ of $\mr_{22}$ corresponds to column~$k+j$ of $\mr$.

\paragraph{Reduction to the case $i=k$ and $j=1$}
Suppose that  $i < k$ and $j > 1$. Let $\mP_{i,k}$ be the permutation that permutes columns $i$ and $k$ of $\mr$, and let $\mr_{11} \mP_{i,k} = \bar{\mq}_{11} \bar{\mr}_{11}$ be the unpivoted QR decomposition.
Similarly, let $\mP_{1,j}$ be the permutation that permutes columns $k+j$ and $k+1$ of $\mr$, and
let $\mr_{22} \mP_{1,j}=\bar{\mq}_{22}\bar{\mr}_{22}$ be the unpivoted QR decomposition. 
With
\begin{align*}
\bar{\mr}_{12} \equiv  \bar{\mq}_{11}^T \mr_{12} \mP_{1,j}, \qquad
\bar{\mP} \equiv \begin{bmatrix} \mP_{i,k} & \vzero \\  \vzero & \mP_{1,j} \end{bmatrix},
\end{align*}
the matrix
\begin{align*}
    \mr \bar{\mP} = \begin{bmatrix} \mr_{11} & \mr_{12} \\ \vzero & \mr_{22} \end{bmatrix} \begin{bmatrix} \mP_{i,k} & \vzero \\ \vzero  & \mP_{1,j} \end{bmatrix} = \begin{bmatrix} \mr_{11}\mP_{i,k} & \mr_{12}\mP_{1,j} \\ \vzero  & \mr_{22}\mP_{1,j} \end{bmatrix}
\end{align*}
has the unpivoted QR decomposition
\begin{align*}
    \mr \bar{\mP} = \begin{bmatrix} \bar{\mq}_{11} &  \vzero\\ \vzero & \bar{\mq}_{22} \end{bmatrix} \begin{bmatrix} \bar{\mr}_{11} & \bar{\mr}_{12} \\  \vzero & \bar{\mr}_{22} \end{bmatrix}.
\end{align*}
The assumption of non-negative diagonal elements in the upper triangular matrices 
implies $\det(\mr_{11}) = \det(\bar{\mr}_{11})$.
From
\begin{align*}
\bar{\mr}_{11}^{-1} \bar{\mr}_{12} = (\bar{\mq}_{11}^{T} \mr_{11} \mP_{i,k})^{-1} (\bar{\mq}_{11}^{T} \mr_{12} \mP_{1,j}) = \mP_{i,k}^{T} \mr_{11}^{-1} \mr_{12} \mP_{1,j},
\end{align*}
the invariance of the two-norm under multiplication by orthogonal matrices, and the non-negativity
of the diagonal elements follows
\begin{align*}
|\mr_{11}^{-1} \mr_{12}|_{i,j} = |\bar{\mr}_{11}^{-1} \bar{\mr}_{12}|_{k,1},\qquad
\omega_i(\mr_{11}) = \omega_k(\bar{\mr}_{11}),
\qquad \gamma_j(\mr_{22}) = \gamma_1(\bar{\mr}_{22}).
\end{align*}
Thus, the relevant quantities do not change under permutations and subsequent QR decompositions.

\paragraph{Relevant quantities induced by the partitioning of upper triangular matrices}
With $i=k$ and $j=1$,
distinguish\footnote{To increase readability, we sometimes use blank spaces to represent 0 elements.} rows and columns $k$ and $k+1$,
\begin{align*}
    \mr=\begin{bmatrix}
        \begin{array}{c|c}
          \mr_{11} & \mr_{12} \\
          \hline
           & \mr_{22} 
        \end{array}
      \end{bmatrix}
      = \begin{bmatrix}
        \begin{array}{c c |c c}
          \hat{\mr}_{11} & \va & \vb & \hat{\mr}_{12} \\
           & \omega & \beta & \vc^{T} \\
          \hline
           &  & \gamma & \vd^{T} \\
           & & & \hat{\mr}_{22}
        \end{array}
      \end{bmatrix}
\end{align*}
where $\hat{\mr}_{11} \in \real^{(k-1)\times (k-1)}$, $\hat{\mr}_{12} \in \real^{(k-1)\times(p-k-1)}$, 
$\omega>0$ and $\gamma>0$.
Upper triangularity implies the determinant relation
\begin{align}\label{e_B4det}
\det(\mr_{11})=\omega\, \det(\hat{\mr}_{11}).
\end{align}
Looking at the trailing row of $\mr_{11}^{-1}$ and the leading column of $\mr_{22}$,
\begin{align*}
    \mr_{11}^{-1} = \begin{bmatrix} \hat{\mr}_{11}^{-1} & -\frac{1}{\omega}\hat{\mr}_{11}^{-1} \va \\ \vzero & \frac{1}{\omega} \end{bmatrix},\qquad
   \mr_{22} = \begin{bmatrix} \gamma & \vd^{T} \\ \vzero & \hat{\mr}_{22} \end{bmatrix},  
    \end{align*}
gives
    \begin{align*}
1/\|\ve_k^T\mr_{11}^{-1}\|_2=\omega_k(\mr_{11}) = \omega,\qquad
\|\mr_{22}\ve_1\|_2=\gamma_1(\mr_{22}) = \gamma.
\end{align*}
Element $(k, 1)$ of $\mr_{11}^{-1} \mr_{12}$ equals
\begin{align*}
(\mr_{11}^{-1} \mr_{12})_{k,1} = \ve_k^T\begin{bmatrix} \hat{\mr}_{11}^{-1} & -\frac{1}{\omega}\hat{\mr}_{11}^{-1} \va \\ \vzero  & \frac{1}{\omega} \end{bmatrix} \begin{bmatrix} \vb & \hat{\mr}_{12} \\ \beta & \vc^{T} \end{bmatrix}\ve_1=   \begin{bmatrix} \vzero & \frac{1}{\omega}
\end{bmatrix}
\begin{bmatrix}\vb \\ \beta \end{bmatrix} = \frac{\beta}{\omega},
\end{align*}
\paragraph{The action}
Let $\mP$ be the permutation that 
permutes columns $k$ and $k+1$ of $\mr$,
\begin{align*}
\mr\mP = \begin{bmatrix}
        \begin{array}{c c |c c}
          \hat{\mr}_{11} & \vb & \va & \hat{\mr}_{12} \\
           & \beta & \omega & \vc^{T} \\
          \hline
           &  \gamma& 0 & \vd^{T} \\
           & & & \hat{\mr}_{22}
        \end{array}
      \end{bmatrix}
\end{align*}
To return to upper triangular form, perform an unpivoted QR decomposition $\mr\mP=\widetilde{\mq}\widetilde{\mr}$ that zeros out $\gamma$
by rotating rows $k$ and $k+1$.
The resulting triangular matrix $\widetilde{\mr}$ has a leading principal submatrix
\begin{align*}
\widetilde{\mr}_{11}=\begin{bmatrix} \hat{\mr}_{11} & \vb\\\vzero & \sqrt{\beta^2+\gamma^2}\end{bmatrix}
\end{align*}
with the determinant relation
\begin{align*}
\det(\widetilde{\mr}_{11}) = \sqrt{\beta^2+\gamma^2}\det(\hat{\mr}_{11}).
\end{align*}
Combine this with the old determinant relation (\ref{e_B4det})
\begin{align*}
 \frac{\det(\widetilde{\mr}_{11})}{\det(\mr_{11})} & = \frac{\sqrt{\beta^2+\gamma^2}}{\omega}=
 \sqrt{ \left (\frac{\beta}{\omega} \right )^2 + \left (\frac{\gamma}{\omega} \right )^2} \\ &= \sqrt{(\mr_{11}^{-1} \mr_{12})^2_{k,1} + (\gamma_1(\mr_{22})/\omega_k(\mr_{11}))^2}. 
 \end{align*}
\end{proof}

The following proof of Theorem~\ref{t_srrqr} relies on results from \cite[section 3.3]{HoJ91} and \cite[section 3]{GuE96} but without the assumption that $\mchi$ has full column rank.

\begin{lemma}[Proof of Theorem~\ref{t_srrqr}]\label{l_srrqr}
Let $\mchi \in \real^{n \times p}$ with $n > p$ have singular values $\sigma_1 \geq \cdots \geq \sigma_p \geq 0$; and QR decomposition $\mchi=\mq\mr$. Let 
$1 \leq k < p$ so that the leading  $k\times k$ principal submatrix of $\mr$ is non-singular. Then Algorithm~\ref{alg_srrqr} with $f \geq 1$ 
computes a QR decomposition 
\begin{align*}
\mchi\mP=\begin{bmatrix}\mq_1 & \mq_2\end{bmatrix}
\begin{bmatrix}\mr_{11}&\mr_{12}\\\vzero & \mr_{22}\end{bmatrix},
\end{align*}
with singular values 
\begin{align*}
    \sigma_i(\mr_{11}) &\geq \frac{\sigma_i}{\sqrt{1+f^2k(p-k)}}, \qquad 1 \leq i \leq k,\\
    \sigma_j(\mr_{22}) &\leq \sigma_{j+k}\sqrt{1+f^2k(p-k)}, \qquad 1 \leq j \leq p-k.
\end{align*}
Additionally, the elements of $\mr_{11}^{-1}\mr_{12}$ are bounded by
\begin{align*}
    |\mr_{11}^{-1}\mr_{12}|_{i,j} \leq f, \qquad 1 \leq i \leq k \ 1 \leq j \leq p-k.
\end{align*}
\end{lemma}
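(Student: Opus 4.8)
The plan is to show first that Algorithm~\ref{alg_srrqr} halts, then to read off all three conclusions from its stopping condition by way of Lemma~\ref{l_srrqr1}, and finally to convert the resulting entrywise estimates into the two singular-value bounds using interlacing~(\ref{e_inter}) and Weyl's inequalities, following \cite[Section~3]{GuE96}. For termination: the opening pivoted QR makes $\mr_{11}$ nonsingular, and every swap executed in the while-loop replaces $\mr_{11}$ by a $\widetilde{\mr}_{11}$ with $\det(\widetilde{\mr}_{11})=\rho_{ij}\det(\mr_{11})$ where the chosen $\rho_{ij}$ exceeds $f\geq 1$; hence $|\det(\mr_{11})|$ strictly increases (so $\mr_{11}$ stays nonsingular), and since $|\det(\mr_{11})|$ depends only on the selected $k$-subset of the $p$ columns, of which there are only finitely many, the loop runs finitely often. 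At exit, $\rho_{ij}\leq f$ for all $1\leq i\leq k$ and $1\leq j\leq p-k$.

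I would then substitute this into the identity of Lemma~\ref{l_srrqr1},
\begin{align*}
\rho_{ij}^2=(\mr_{11}^{-1}\mr_{12})_{ij}^2+\bigl(\gamma_j(\mr_{22})/\omega_i(\mr_{11})\bigr)^2\leq f^2,
\end{align*}
valid entrywise at exit because the $\rho_{ij}$ are computed from the current $\mr_{11},\mr_{12},\mr_{22}$. Nonnegativity of the two summands yields at once $|\mr_{11}^{-1}\mr_{12}|_{ij}\leq f$ --- the third conclusion --- together with $\gamma_j(\mr_{22})\leq f\,\omega_i(\mr_{11})$ for all $i,j$. Summing the $k(p-k)$ entries gives $\|\mr_{11}^{-1}\mr_{12}\|_2\leq\|\mr_{11}^{-1}\mr_{12}\|_F\leq f\sqrt{k(p-k)}$. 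Taking the minimum over $i$ gives $\max_j\gamma_j(\mr_{22})\leq f\min_i\omega_i(\mr_{11})$, and combining this with $\|\mr_{22}\|_2\leq\sqrt{p-k}\,\max_j\gamma_j(\mr_{22})$ and with $\min_i\omega_i(\mr_{11})\leq\sqrt{k}\,\sigma_k(\mr_{11})$ (which follows from $\|\mr_{11}^{-1}\|_2\leq\|\mr_{11}^{-1}\|_F\leq\sqrt{k}\,\max_i\|\ve_i^T\mr_{11}^{-1}\|_2$) yields $\|\mr_{22}\|_2\leq f\sqrt{k(p-k)}\,\sigma_k(\mr_{11})$.

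For the singular-value bounds, note $\sigma_i(\mchi)=\sigma_i(\mr)$ since $\mchi\mP=\mq\mr$ with $\mq^T\mq=\mi$, and decompose
\begin{align*}
\mr=\begin{bmatrix}\mr_{11}\\\vzero\end{bmatrix}\begin{bmatrix}\mi_k & \mr_{11}^{-1}\mr_{12}\end{bmatrix}+\begin{bmatrix}\vzero & \vzero\\\vzero & \mr_{22}\end{bmatrix}.
\end{align*}
By Weyl's inequality $\sigma_i(\mathbf{A}+\mathbf{B})\leq\sigma_i(\mathbf{A})+\|\mathbf{B}\|_2$, submultiplicativity, and $\left\|\begin{bmatrix}\mi_k & \mr_{11}^{-1}\mr_{12}\end{bmatrix}\right\|_2=\sqrt{1+\|\mr_{11}^{-1}\mr_{12}\|_2^2}$,
\begin{align*}
\sigma_i(\mchi)\leq\sigma_i(\mr_{11})\sqrt{1+\|\mr_{11}^{-1}\mr_{12}\|_2^2}+\|\mr_{22}\|_2.
\end{align*}
Inserting the two norm bounds from the previous paragraph and using $\sigma_k(\mr_{11})\leq\sigma_i(\mr_{11})$ for $i\leq k$ gives, after dividing by $\sigma_i(\mr_{11})$, a lower bound $\sigma_i(\mr_{11})\geq\sigma_i(\mchi)/q$ with $q$ a polynomial in $f$, $k$, $p-k$; the dual computation on the trailing block gives the companion bound $\sigma_j(\mr_{22})\leq\sigma_{j+k}\,q$.

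The main obstacle is getting the \emph{sharp} polynomial $q=\sqrt{1+f^2k(p-k)}$ claimed in the lemma, rather than the constant-factor-larger bound the argument above produces (relaxing $\|\mr_{11}^{-1}\mr_{12}\|_2$ and $\|\mr_{22}\|_2$ to matrix norms too early costs a factor of about $2$), and doing this without assuming $\mchi$ has full column rank. The sharp constant requires carrying the entries of $\mr_{11}^{-1}\mr_{12}$, the row norms of $\mr_{11}^{-1}$, and the column norms of $\mr_{22}$ jointly --- precisely the combination packaged into $\rho_{ij}$ --- which is the content of \cite[Lemma~3.1 and Theorem~3.2]{GuE96}; I would reproduce that argument with the extra bookkeeping needed to remove the full-rank hypothesis, handling a singular $\mr$ by working with the invertible perturbation $\mr+\varepsilon\mi_p$ and letting $\varepsilon\to 0$, and noting that $\rank(\mr_{22})=\rank(\mchi)-k$ so that the vanishing singular values of $\mr_{22}$ match the vanishing $\sigma_{k+1},\sigma_{k+2},\dots$ on the right-hand sides.
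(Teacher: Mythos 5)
Your termination argument and the entrywise bound $|\mr_{11}^{-1}\mr_{12}|_{ij}\leq f$ are fine and coincide with the paper, which likewise reads that bound directly off the stopping criterion via Lemma~\ref{l_srrqr1}. The genuine gap is in the two singular-value inequalities: your additive splitting of $\mr$ plus Weyl's additive inequality yields only $\sigma_i(\mchi)\leq\sigma_i(\mr_{11})\bigl(\sqrt{1+f^2k(p-k)}+f\sqrt{k(p-k)}\bigr)$, not the stated constant, and you concede this while deferring the sharp version to \cite{GuE96} without actually supplying the argument. Since the lemma asserts the constant $\sqrt{1+f^2k(p-k)}$, the proof as written does not establish the statement.

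The missing idea is a \emph{multiplicative} splitting with a judicious scaling. Set $\alpha=\sigma_1(\mr_{22})/\sigma_k(\mr_{11})$ and factor
\begin{align*}
\begin{bmatrix}\alpha\mr_{11}&\vzero\\ \vzero&\mr_{22}\end{bmatrix}
=\underbrace{\begin{bmatrix}\mr_{11}&\mr_{12}\\ \vzero&\mr_{22}\end{bmatrix}}_{\mr}\;
\underbrace{\begin{bmatrix}\alpha\mi_k&-\mr_{11}^{-1}\mr_{12}\\ \vzero&\mi_{p-k}\end{bmatrix}}_{\mw}.
\end{align*}
The scaling forces $\sigma_k(\alpha\mr_{11})=\sigma_1(\mr_{22})$, so the block-diagonal matrix has $\sigma_j(\mr_{22})$ as its $(k+j)$th singular value, and the multiplicative Weyl inequality gives $\sigma_j(\mr_{22})\leq\sigma_{j+k}(\mr)\,\|\mw\|_2$ with the error entering only once, as a single factor. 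Then $\|\mw\|_2^2\leq 1+\|\mr_{11}^{-1}\mr_{12}\|_2^2+\alpha^2$ with $\alpha^2=\|\mr_{22}\|_2^2\,\|\mr_{11}^{-1}\|_2^2$, and after relaxing to Frobenius norms the right-hand side is exactly $1+\sum_{i,j}\rho_{ij}^2\leq 1+f^2k(p-k)$, because $\|\mr_{11}^{-1}\mr_{12}\|_F^2+\|\mr_{22}\|_F^2\,\|\mr_{11}^{-1}\|_F^2=\sum_{i,j}\bigl((\mr_{11}^{-1}\mr_{12})_{ij}^2+(\gamma_j(\mr_{22})/\omega_i(\mr_{11}))^2\bigr)$ --- the product of the two Frobenius sums is the double sum of the products. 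This joint, entrywise use of the stopping criterion is precisely what your sequential chain $\max_j\gamma_j(\mr_{22})\leq f\min_i\omega_i(\mr_{11})\leq f\sqrt{k}\,\sigma_k(\mr_{11})$ throws away. The lower bound on $\sigma_i(\mr_{11})$ follows from the dual factorization $\mr=\hat{\mr}_D\hat{\mw}$ with the trailing block scaled by $1/\alpha$. Finally, no $\varepsilon$-perturbation is needed to drop the full-rank hypothesis: $\mr_{11}$ is nonsingular by construction, and the only degenerate case, $\sigma_1(\mr_{22})=0$, is dispatched directly, since then $\mr$ factors as $\begin{bmatrix}\mr_{11}^T&\vzero\end{bmatrix}^T\begin{bmatrix}\mi_k&\mr_{11}^{-1}\mr_{12}\end{bmatrix}$ and the first inequality follows at once.
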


\begin{proof}
We prove the inequality in the reverse order.

\paragraph{Third inequality} It follows from the observation that
Algorithm~\ref{alg_srrqr} terminates once
\begin{align*}
    |\mr_{11}^{-1}\mr_{12}|_{i,j} \leq \sqrt{|\mr_{11}^{-1}\mr_{12}|^2_{i,j} + (\gamma_j(\mr_{22})/\omega_i(\mr_{11}))^2} \leq f
    \end{align*}
holds for $1 \leq i \leq k$, $1 \leq j \leq p-k$.    

\paragraph{Second inequality}
We scale the leading diagonal block so that it contains the $k$ dominant singular values by  $\alpha \equiv \sigma_1(\mr_{22})/\sigma_k(\mr_{11})$. Extract
a judiciously scaled block-diagonal matrix 
\begin{align*}
    \mr_D \equiv \begin{bmatrix} \alpha \mr_{11} &  \vzero\\ \vzero & \mr_{22}  \end{bmatrix} 
    = \underbrace{\begin{bmatrix} \mr_{11} & \mr_{12} \\ \vzero & \mr_{22} \end{bmatrix}}_{\mr} \underbrace{\begin{bmatrix} \alpha \mi_k & -\mr_{11}^{-1}\mr_{12} \\  \vzero & \mi_{p-k} \end{bmatrix}}_{\mw}.
\end{align*}
where $\alpha \mr_{11}$ contains the $k$ dominant
singular values, because
\begin{align*}
\sigma_k(\alpha\mr_{11})=\alpha\sigma_k(\mr_{11})=\sigma_1(\mr_{22}).
\end{align*}
This means the largest singular value of $\mr_{22}$
is equal to the smallest singular value of $\alpha\mr_{11}$, thus less than or equal to all other singular values of $\alpha\mr_{11}$. Therefore, the trailing block $\mr_{22}$ contains the $p-k$ smallest singular values of $\mr_D$.

The Weyl product inequality \cite[(7.3.13)]{HoJ12} implies
\begin{align}
\label{e_wineq}
    \sigma_{j+k}(\mr_D) = \sigma_j(\mr_{22}) \leq \sigma_{j+k}(\mr)  \|\mw\|_2, \qquad 1 \leq j \leq p-k.
\end{align}
We bound $\|\mw\|_2$,
by bounding the two-norm in terms of the Frobenius norm and, in turn, expressing this as a sum,
\begin{align*}
    \|\mw\|_2^2 &\leq 1+\|\mr_{11}^{-1}\mr_{12}\|_2^2+\alpha^2 \\
    &= 1 + \|\mr_{11}^{-1}\mr_{12}\|_2^2 + \|\mr_{22}\|_2^2 \|\mr_{11}^{-1}\|_2^2 \\
    &\leq 1 + \|\mr_{11}^{-1}\mr_{12}\|_F^2 + \|\mr_{22}\|_F^2 \|\mr_{11}^{-1}\|_F^2 \\
    &= 1 + \sum_{i=1}^k \sum_{j=1}^{p-k} \left ( (\mr_{11}^{-1} \mr_{12} )_{i,j}^2 + (\gamma_j(\mr_{22})/\omega_i(\mr_{11}))^2 \right ) \\
    &\leq 1 + \sum_{i=1}^k \sum_{j=1}^{p-k} f^2 \ = 1 + f^2k(p-k).
\end{align*}
Now substitute
$\|\mw \|_2 \leq \sqrt{1 + f^2k(p-k)}$  into (\ref{e_wineq}).

\paragraph{First inequality}
If $\sigma_1(\mr_{22}) = 0$, then $\mr_{22}=\vzero$ and the first inequality holds.

Thus assume that $\sigma_1(\mr_{22}) > 0$ so that  $\alpha \equiv \sigma_1(\mr_{22})/\sigma_k(\mr_{11})>0$.
Deriving a lower bound for the large singular values requires a slightly different ansatz.  
We scale the trailing block by $1/\alpha$
so that it contains the $p-k$ smallest singular values. Extract a differently scaled block-diagonal matrix,
\begin{align*}
    \mr = \begin{bmatrix} \mr_{11} & \mr_{12} \\ \vzero  & \mr_{22} \end{bmatrix} =   \underbrace{\begin{bmatrix} \mr_{11}  & \vzero \\ \vzero  &  \mr_{22}/\alpha \end{bmatrix}}_{\hat{\mr}_{D}} \underbrace{\begin{bmatrix} \mi_{k} & \mr_{11}^{-1}\mr_{12} \\ \vzero & \alpha \mi_{p-k} \end{bmatrix}}_{\hat{\mw}}.
\end{align*}
where $\mr_{22}/\alpha$ contains the $p-k$ subdominant
singular values, because
\begin{align*}
\sigma_1(\mr_{22}/\alpha)=\sigma_1(\mr_{22})/\alpha=\sigma_k(\mr_{11}).
\end{align*}
This means the smallest singular value of $\mr_{11}$ is equal to the smallest singular value of $\mr_{22}/\alpha$, thus larger or equal to all other singular values of $\mr_{22}/\alpha$. Therefore, the leading block $\mr_{11}$ contains the $k$ largest singular values of $\mr_D$.

An analogous argument as above shows
\begin{align*}
    \sigma_i(\mr) &\leq \sigma_i(\hat{\mr}_D) \|\hat{\mw}\|_{2} =
  \sigma_i(\mr_{11}) \|\hat{\mw}\|_{2}\\
  &\leq 
  \sigma_i(\mr_{11})  \sqrt{1+f^2k(p-k)},
\qquad 1 \leq i \leq k.
\end{align*}
\end{proof}

\section{Supplemental Material}
\label{sec:supp}
We present more details for the models in section~\ref{sec:RealSensDesc}:
Epidemiological (section~\ref{s_mepi}),
cardiovascular tissue (section~\ref{s_cardio}), 
fibrin polymerization (section~\ref{s_wound}), 
and neurological (section~\ref{s_neuro}).
All 
models are represented as coupled systems of ODEs (ordinary differential
equations), and parameter sensitivities are determined 
from their numerical solution via complex-step or finite differences.

\subsection{Epidemiological Models}\label{s_mepi}
\label{subsec:epimodels} We  implemented five (nested) epidemiological compartment models in section \ref{sec:RealSensDesc}
that represent COVID-19 spread among the US population for identifiability anaylsis of the model parameters. 

Figure~\ref{fig:SIRdiags}
displays the different compartments associated with the state variables in each model, and 
the possible transitions from one infection status to another
within a population \cite{TunLe18, Per20}. The parameters above the
arrows represent the transition rates.
 From these diagrams, nonlinear ordinary differential equations for each system can be derived by analogy with leading-order mass action reaction kinetics.
 
For the SIR, SEIR, SVIR, and SEVIR models in Figure \ref{fig:SIRdiags},
the quantity of interest is the number $I(t)$ of infectious individuals at time $t$; and for the COVID model it is $(A+I+H)(t)$.
 
We calibrated the models to the spread of COVID-19 through the US 
based on CDC data and relevant studies.
Table~\ref{tab:epiTab} describes the physical interpretation of each parameter and the average nominal value for generating sensitivities. 

As outlined in \S \ref{sec:physmods}, letting $q_j^*$ represent the nominal value of the $j$th parameter in Table \ref{tab:epiTab}, the algorithms were tested on 10,000 matrices for each model, evaluated at parameter vectors for which the $j$th component is sampled uniformly from the interval  $[0.5q_j^*,1.5q_j^*]$.

\begin{figure}[ht]
    \centering
    \includegraphics[width=\textwidth]{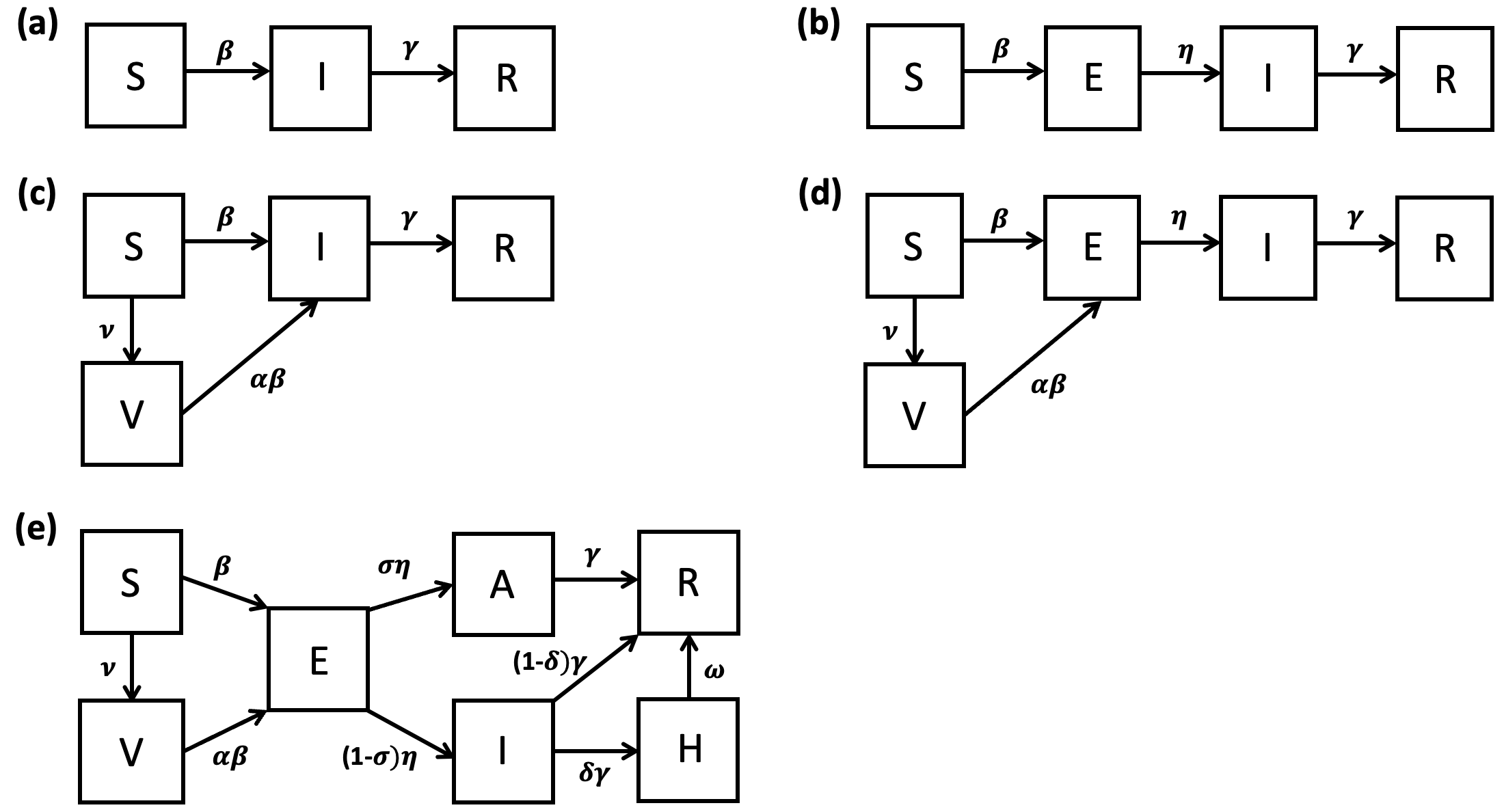}
    \caption{Compartment diagrams for the epidemiological models in section \ref{sec:RealSensDesc} to illustrate the possible transitions from one infection statius to another within a
    population
    for (a) SIR, (b) SEIR, (c) SVIR, (d) SEVIR, and (e) COVID-19 models. }
    \label{fig:SIRdiags}
\end{figure}

\begin{table}[ht]
\label{tab:epiTab}
\centering
\begin{tabular}{|c|c|l|r|}
\hline
Par & Mean & Description & Ref. \\
\hline
$\beta$ & 0.80 & Transmission coefficient & \cite{Ke21} \\
$\eta$ & 0.33 & Rate of progression to infectiousness (following exposure) & \cite{lima20, LiuMag20} \\
$\gamma$ & 0.14 & Rate of progression through infectious stage & \cite{Sanche20, Park20} \\
$\alpha$ & 0.10 & Probability of infection after vaccination & \cite{Per20} \\
$\nu$ & 0.004 & Rate of vaccination & \cite{CDC21} \\
$\sigma$ & 0.35 & Percentage of infected that are asymptomatic & \cite{Sahe21} \\
$\delta$ & 0.05 & Rate of hospitalization for symptomatic infected & \cite{Garg20} \\
$\omega$ & 0.82 & Rate of recovery for hospitalized infections & \cite{Jama21} \\
\hline
\end{tabular}
\caption{Parameter values and physical interpretations in epidemiological models from \S \ref{sec:physmods}}
\end{table}

\subsection{Cardiovascular Tissue Biomechanics (HGO) Model}\label{s_cardio}
The sensitivity matrix corresponding to this model arose from a nonlinear hyperelastic structural model of the vessel wall for a large pulmonary artery in the context of ex vivo biomechanical experiments. 
A two-layer, anisotropic vessel wall model was developed, within the general framework of the Holzapfel-Gasser-Ogden (HGO) model \cite{HGO}, and systematically reduced with identifiabilty techniques rooted in the scaled sensitivity matrix. 
The quantity of interest  was a hybrid normalized residual vector amalgamating data measuring lumen area and wall thickness changes with increasing fluid pressure.

This data set arises from ex vivo biomechanical testing of coupled flow and deformation for left pulmonary arteries excised from normal and hypertensive mice. 
This model contains 16 model parameters: 8 are fixed based on values in the literature or information from the experiments, while the remaining 8 are estimated via systematic model reduction in the context of an inverse problem \cite{Hai}.

Results of the systematic model reduction in \cite{Hai} are consistent with the value $k=5$ for HGO in Table 4.1.

\subsection{Fibrin Polymerization Model for Wound Healing Applications}\label{s_wound}
Motivated by a wound healing application,
this 
is a biochemical reaction kinetics model for in vitro fibrin  polymerization, mediated by the enzyme thrombin, and  \cite{Pearce21}. 
The $46\times 11$ sensitivity matrix
represents 46 time points for the concentration of fibrin matrix,
i.e.~in-vitro clots;
and 11 parameters that represent
reaction rates for the associated
 biochemical reaction species. The
 parameters are chosen from 
 the last row of \cite[Table 1]{Pearce21} for a mathematical model of hemostasis, the first stage of wound healing during which fibrin (extracellular) matrix polymerization occurs. 

The corresponding system of ODEs is based on first-order reaction kinetics, analogous to the mass-action assumptions for the epidemiological compartment models in 
section~\ref{s_mepi}.
The systematic identifiability analysis and model reduction for the inverse problem in \cite{Pearce21} are consistent with the value $k=6$ under the "Wound"
model in Table~\ref{tab:RealTab}.

\subsection{Neurological Model}\label{s_neuro}
This complex  model consists of
a system of nonlinear ODEs \cite{hart2019} that 
quantify the neurovascular coupling (NVC) response, and the local changes in vascular resistance due to neuronal activity \cite{hart2019}. The
state variables represent different components of the human brain, while the parameters represent the ion channels and metabolic signalling among them.

The  sensitivity matrix is the
largest and most ill-conditioned sensitivity matrix in Table \ref{tab:RealTab}, along with
the largest number 
of state variables, parameters $p=175$, 
and  observations $n=200$. 

\section{Dynamical systems for adversarial CSS matrices}
\label{sec:supp3}

Given an adversarial CSS matrix $\mchi\in\real^{n\times p}$ with $n\geq p$, we construct a dynamical system whose sensitivity matrix is identical to $\mchi$.

Let $\ms = \mU\msig\mv^\top$ have
have a thin SVD as in (\ref{e_SVDS}) and distinguish 
the columns of the singular vector matrices,
\begin{align*}
\mU=\begin{bmatrix}\vu_1 & \ldots & \vu_p\end{bmatrix}\in\real^{n\times p},
\qquad
\mv=\begin{bmatrix}\vv_1 & \ldots & \vv_p\end{bmatrix}\in\real^{p\times p}.
\end{align*}
Pick some vector $\vq\in\real^p$,
we are going to construct a system of ODEs parameterized by~$\vq$.

To this end, let
\begin{align*}
\mlam \equiv \diag
\begin{pmatrix}\lambda_1  & \cdots & \lambda_p \end{pmatrix}\in\real^{p\times p}
\end{align*}
be a diagonal matrix yet to be specified. 
Denote by $\vx(t)\in\real^p$ the state vector 
and by $\vy(t)$ the observation vector,
and combine everything 
into the initial value problem
\begin{align}\label{e_dyn}
\begin{split}
\frac{d\vx}{dt} &=  \> \mlam\vx,
\qquad \vx(0) = \mv^T\vq,\\ 
\vy(t) &=  \> \mU\vx(t)
\end{split}
\end{align} 
with solution $\vx(t) = \exp(t\mlam) \mv^T\vq$.
Since $\mlam$ is diagonal, the observation equals
 \begin{align*}
 \vy(t) = \mU \exp(t\mlam) \mv^T\vq
 = \sum_{j=1}^p{\vu_j e^{t \lambda_j}\vv_j^T\vq}. 
 \end{align*}
 As in section~\ref{s_rsd1}, differentiate $\vy$ with respect to $\vq$, and then evaluate at time $t=\tau>0$. The
rows of the resulting sensitivity matrix equal 
\[ \frac{\partial y_i}{\partial \vq} =  
\sum_{j = 1}^p{u_{ij} \,e^{\tau \lambda_j}\,\vv_j^T},
\qquad 1 \leq i \leq n. \]
Set $\sigma_j=e^{\tau \lambda_j}$ so that $\lambda_j=\frac{1}{\tau}\ln{\sigma_j}$, $1\leq j\leq p$. Then the sensitivity matrix at time $t=\tau$ is
\[ \ms(t;\vq) = \sum_{j = 1}^p \vu_j \sigma_j\,\vv_j^T = 
\mU \msig \mv^T.\]
Therefore, the dynamical system (\ref{e_dyn})
has the desired sensitivity matrix $\mchi$ at time~$\tau$.

\bibliography{parameter}
\end{document}